\documentclass{article}
\usepackage{mathrsfs}
\usepackage{graphicx}
\usepackage{bbm}
\usepackage{relsize}
\usepackage{enumerate}
\usepackage{amssymb,amsmath,amsthm}
\usepackage{bm}
\usepackage{csquotes}
\usepackage[title]{appendix}
\usepackage{url}
\usepackage{pgf,tikz}
\usetikzlibrary{arrows}
\usetikzlibrary{angles,quotes}
\usetikzlibrary{calc}
\usetikzlibrary{positioning}
\usetikzlibrary{calc,shapes.geometric}
\usetikzlibrary{decorations.pathreplacing}
\usetikzlibrary{cd}
\usetikzlibrary{positioning}
\usepackage{titlesec}
\usepackage{verbatim}
\usepackage{mathrsfs}
\usepackage[letterpaper, portrait, margin=1in]{geometry}
\usepackage{amsmath, amssymb, amsthm}
\usepackage{graphicx}
\usepackage{mathrsfs}
\usepackage{xcolor}
\usepackage{bbm}
\usepackage{mathtools}
\usepackage[linesnumbered,lined,ruled]{algorithm2e}
\usepackage{algpseudocode}
\usepackage{chngcntr}
\usepackage{float,stmaryrd}

\newcommand{\lw}{\textrm{lw}}
\newcommand{\pint}{P_\textrm{int}}

\newtheorem{theorem}{Theorem}

\newtheorem{prop}[theorem]{Proposition}

\newtheorem{cor}[theorem]{Corollary}
\newtheorem{lemma}[theorem]{Lemma}
\newtheorem{defn}[theorem]{Definition}

\numberwithin{theorem}{section}
\numberwithin{equation}{section}

\title{Convex lattice polygons with all lattice points visible}
\author{Ralph Morrison and Ayush Kumar Tewari}

\makeatletter
\def\keywords{\xdef\@thefnmark{}\@footnotetext}
\makeatother

\keywords{2020 \emph{Mathematics Subject Classification.} Primary 52B20; Secondary 52C05, 14T15}%
\keywords{\emph{Key words and phrases.} panoptigons, lattice width, big face graphs}%

\begin{document}

\maketitle

\begin{abstract}
Two lattice points are visible to one another if there exist no other lattice points on the line segment connecting them.  In this paper we study convex lattice polygons that contain a lattice point such that all other lattice points in the polygon are visible from it.  We completely classify such polygons, show that there are finitely many of lattice width greater than $2$, and computationally enumerate them.  As an application of this classification, we prove new obstructions to graphs arising as skeleta of tropical plane curves.
\end{abstract}

\section{Introduction}

A lattice point in $\mathbb{R}^2$ is any point with integer coordinates, and a lattice polygon is any polygon whose vertices are lattice points.  We say that two distinct lattice points $p=(a,b)$ and $q=(c,d)$ are visible to one another if the line segment $\overline{pq}$ contains no lattice points besides $p$ and $q$, or equivalently if $\gcd(a-c,b-d)=1$; by convention we say that any $p$ is visible from itself.  Points visible from the origin $O=(0,0)$ are called visible points, with all other points being called invisible. The properties of visible and invisible points have been subject to a great deal of study over the past century, as surveyed in \cite[\S 10.4]{research_book}.  The question of which structures can appear among visible points, invisible points, or some prescribed combination thereof was  studied in \cite{herzog-stewart}, where it was proved that one can find a copy of any convex lattice polygon (indeed, any arrangement of finitely many lattice points) consisting entirely of invisible points.

\begin{figure}[hbt]
\centering
\includegraphics{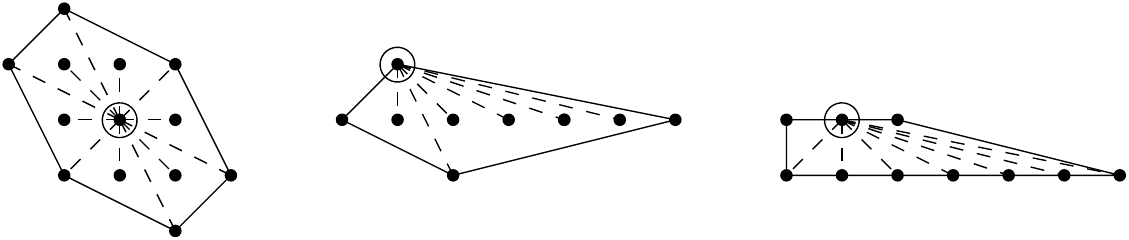}
\caption{Three panoptigons, with a panoptigon point circled and lines of sight illustrated; the middle polygon has a second panoptigon point, namely the bottom vertex}
\label{figure:several_panoptigons}

\end{figure}

In this paper we pose and answer a somewhat complementary question:  which convex lattice polygons including the origin contain only {visible} lattice points?  We define a \emph{panoptigon}\footnote{This name is modeled off of \emph{panopticon}, an architectural design that allows for one position to observe all others.  It comes from the Greek word \emph{panoptes}, meaning ``all seeing''.} to be a convex lattice polygon $P$ containing a lattice point $p$ such that all other lattice points in $P$ are visible from $p$.  We call such a $p$ a \emph{panoptigon point} for $P$.  Thus up to translation, a panoptigon is a convex lattice polygon containing the origin such that every point in $P\cap\mathbb{Z}^2$ is a visible point.  Three panoptigons are pictured in Figure \ref{figure:several_panoptigons}, each with a panoptigon point and its lines of sight highlighted; note that the panoptigon point need not be unique.

One can quickly see that there exist infinitely many panoptigons; for instance, the triangles with vertices at $(0,0)$, $(1,0)$, and $(a,1)$ are panoptigons for any value of $a$.  However, this is not an interesting family of examples since any two of these triangles are \emph{equivalent}, made precisely below.

\begin{defn}
A \emph{unimodular transformation} is an integer linear map $t:\mathbb{R}^2\rightarrow\mathbb{R}^2$ that preserves the integer lattice $\mathbb{Z}^2$; any such map is of the form $t(p)=Ap+b$, where $A$ is a $2\times 2$ integer matrix with determinant $\pm1$ and $b\in\mathbb{Z}^2$ is a translation vector.  We say that two lattice polygons $P$ and $Q$ are \emph{equivalent} if there exists a unimodular triangulation $t$ such that $t(P)=Q$.
 \end{defn}
 
 It turns out that there are infinitely many panoptigons even up to equivalence: note that the triangle with vertices at $(0,0)$, $(0,-1)$, and $(b,-1)$ is a panoptigon for every positive integer $b$, and any two such triangles are pairwise inequivalent since they have different areas.  We can obtain nicer results if we stratify polygons according to the \emph{lattice width} of a polygon $P$, the minimum integer $w$ such that there exists a polygon $P'$ equivalent to $P$ in the horizontal strip $\mathbb{R}\times [0,w]$.  Although there are infinitely many panoptigons of lattice widths $1$ and $2$, we can still classify them completely, as presented in Lemmas \ref{lemma:panoptigon_g0} and \ref{lemma:panoptigon_g1}.  Once we reach lattice width $3$ or more, we obtain the following powerful result.

\begin{theorem}\label{theorem:at_most_13}
Let $P$ be a panoptigon with lattice width  $\lw(P)\geq 3$.  Then $|P\cap\mathbb{Z}^2|\leq 13$.
\end{theorem}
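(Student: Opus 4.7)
The plan is to normalize via a unimodular transformation so that the panoptigon point is at the origin and a minimum-width direction of $P$ is $(0,1)$; then every non-origin lattice point of $P$ is primitive, and $P$ spans integer rows $y_{\min}\le y\le y_{\max}$ with $y_{\max}-y_{\min}=\lw(P)\ge 3$ and $y_{\min}\le 0\le y_{\max}$. Writing $n_k$ for the number of lattice points of $P$ on row $y=k$ and $L_k$ for the length of $P\cap(\mathbb{R}\times\{k\})$, the aim is to bound $\sum_k n_k$ by a row-by-row analysis combined with convexity.

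The first observation is that the only lattice points visible from the origin on row $y=0$ are $(-1,0)$, $(0,0)$, $(1,0)$, so $n_0\le 3$ and the extent of $P$ on this row lies strictly inside $(-2,2)$, hence $L_0<4$. For $|k|\ge 2$ the point $(0,k)$ is invisible, so convexity forces all lattice points of $P$ on row $y=k$ to lie on one side of the $y$-axis; moreover, they form a consecutive block of integers coprime to $|k|$, which gives $n_{\pm 2}\le 1$, $n_{\pm 3}\le 2$, $n_{\pm 4}\le 1$, $n_{\pm 6}\le 1$, and similar small arithmetic caps on each farther row.

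Next I use convexity to get cross-row bounds. If $(r_{-1},-1)$ and $(r_1,1)$ are the rightmost lattice points of $P$ on rows $-1$ and $1$, then the midpoint $((r_{-1}+r_1)/2,0)$ lies in $P$, and since $(2,0)\notin P$ this forces $r_{-1}+r_1\le 3$; the symmetric argument gives $\ell_{-1}+\ell_1\ge -3$, so $n_{-1}+n_1\le 8$ when $n_0=3$, and in every case $n_{-1}+n_0+n_1\le 11$. More generally, for lattice points $(x,y_1),(x',y_2)\in P$ with $y_1<0<y_2$ the segment between them crosses row $0$ at $x$-coordinate $(y_2 x-y_1 x')/(y_2-y_1)\in(-2,2)$, giving analogous constraints between non-adjacent rows, and any lattice midpoint on row $k$ produced by segments between rows $k-j$ and $k+j$ must itself be visible from the origin.

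The final step is a case analysis on the $y$-range of $P$. In the symmetric case $y\in[-2,2]$ (so $\lw(P)=4$), the two outer rows contribute $n_{-2}+n_2\le 2$, yielding a total of at most $11+2=13$; this is realized by the pentagon $\conv\{(-1,-1),(2,-1),(1,-2),(-2,1),(1,2)\}$. When the $y$-range is $[-1,2]$ only one outer row is present and the total is at most $12$. When $P$ lies in a half-plane $y\ge 0$, concavity of $L_k$ combined with $L_2<2$ forces $L_1+L_3\le 2L_2<4$, so $n_1+n_3\le 5$ and the count drops further. For $y$-ranges like $[-1,3]$, the row-3 lattice points tighten the allowed $x$-coordinates on row $-1$ via the cross-row constraint, pruning $n_{-1}$ enough to preserve the bound; for $\lw(P)\ge 5$, the polygon is forced to be so tall and narrow (via the concavity of $L_k$ anchored at $L_0<4$, $L_{\pm 2}<2$) that the additional rows cannot contribute more than is taken away from the middle strip. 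The main obstacle is organizing this final case analysis uniformly: each individual case reduces cleanly to the middle-strip bound together with the row-wise visibility caps, but the number of sub-cases grows with the lattice width, so the bulk of the work lies in verifying that concavity of $L_k$, together with the cross-row midpoint constraints, eliminates every configuration exceeding thirteen lattice points.
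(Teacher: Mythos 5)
Your setup and the individual estimates you state are all correct (the row caps from coprimality, the midpoint constraint $r_{-1}+r_1\le 3$, the concavity of the slice-length function $L_k$), and your $13$-point pentagon is indeed a panoptigon, so the strategy is plausible. But the proof has a genuine gap: the final case analysis, which you yourself flag as "the bulk of the work," is not carried out, and it is not routine. Two concrete problems. First, your case analysis is indexed by the $y$-range $[y_{\min},y_{\max}]$ of $P$, but you never bound this range, so as written the analysis is not even finite; you would need to first prove that a panoptigon point forces $y_{\max}-y_{\min}$ to be small, which is itself a nontrivial convexity argument. Second, for the tall cases the visibility caps you rely on are weak exactly where you need them: rows at odd heights $|k|\ge 5$ admit up to $|k|-1$ consecutive residues coprime to $k$ (e.g.\ $n_{\pm 5}\le 4$, $n_{\pm 7}\le 6$), so the assertion that "the additional rows cannot contribute more than is taken away from the middle strip" requires a quantitative concavity argument pinning $L_k$ against $L_2,L_4<2$, which you gesture at but do not perform. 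A naive summation of your stated caps for, say, the range $[-1,5]$ already gives $8+3+1+2+1+4=19>13$, so the conclusion genuinely depends on the omitted interplay.

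The paper sidesteps both difficulties by normalizing differently: it takes the \emph{lattice diameter} direction (not a lattice width direction) to be horizontal. After disposing of $\ell(P)\le 2$ via $|P\cap\mathbb{Z}^2|\le(\ell(P)+1)^2$, it has a row of at least four consecutive lattice points, and a short triangle-plus-Pick argument shows such a row must sit at height $\pm 1$ relative to the panoptigon point. That single structural fact immediately confines $P$ to heights $-2$ through $7$ and then, after a handful of further exclusions, to an explicit set of $30$ candidate lattice points, at which point the case analysis (on the number of points at height $-1$, namely $4$ through $7$) is finite and short. If you want to complete your argument, the cleanest fix is to adopt that normalization — or at minimum to prove a bound on $y_{\max}-y_{\min}$ before launching the case analysis, and to spell out the concavity estimates that kill the far odd rows.
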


Since there are only finitely many lattice polygons with a fixed number of lattice points up to equivalence \cite[Theorem 2]{lz}, it follows that there are only finitely many panoptigons $P$ with $\lw(P)\geq 3$.  In  Appendix \ref{section:appendix} we detail computations to enumerate all such lattice polygons. This allow us to determine that there exactly $73$  panoptigons of lattice width $3$ or more.  One is the triangle of degree $3$, which has a single interior lattice point; and the other $72$ are non-hyperelliptic, meaning that the convex hull of their interior lattice points is two-dimensional.

As an application of our classification of panoptigons, we prove new results about {tropically planar graphs} \cite{small2017_tpg}.  These are $3$-regular, connected, planar graphs that arise as skeletonized versions of dual graphs of regular, unimodular triangulations of lattice polygons.  We often stratify tropically planar graphs by their first Betti number, also called their genus.  If $G$ is a tropically planar graph arising from a triangulation of a lattice polygon $P$, then the genus of $G$ is equal to the number of interior lattice points of $P$. 

We prove a new criterion for ruling out certain graphs from being tropically planar, notable in that the graphs it applies to are $2$-edge-connected, unlike those ruled out by most existing criteria; this resolves an open question posed in \cite[\S 5]{small2017_tpg}.  We say that a planar graph $G$ is a \emph{big face graph} if for every planar embedding of $G$, there is a bounded face sharing an edge with all other bounded faces.

\begin{theorem}\label{theorem:big_face_graphs}
If $G$ is a big face graph of genus $g\geq 14$, then $G$ is not tropically planar.
\end{theorem}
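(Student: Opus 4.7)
The plan is to turn the big-face hypothesis into the existence of a panoptigon inside the defining Newton polygon and then apply Theorem~\ref{theorem:at_most_13}. Suppose for contradiction that $G$ is a big face graph of genus $g \geq 14$ that is tropically planar, arising as the skeleton of a regular unimodular triangulation $\Delta$ of some lattice polygon $P$. In the tropical embedding of $G$, the bounded faces correspond bijectively to the interior lattice points of $P$, and two bounded faces share an edge precisely when the corresponding lattice points are joined by an edge of $\Delta$. The big-face property therefore supplies an interior lattice point $p_0 \in P^{\circ} \cap \ZZ^2$ that is $\Delta$-adjacent to every other interior lattice point. Unimodularity of $\Delta$ forces every $\Delta$-edge to be primitive, so $p_0$ is visible from every other interior lattice point of $P$. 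Consequently $\pint := \conv(P^{\circ} \cap \ZZ^2)$ is a panoptigon with panoptigon point $p_0$ and $|\pint \cap \ZZ^2| = g$.

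Theorem~\ref{theorem:at_most_13} immediately forces $\lw(\pint) \leq 2$, so it remains to rule out the low lattice-width cases. If $\lw(\pint) \leq 1$, the lattice points of $\pint$ lie on a single line, and visibility from $p_0$ to another collinear lattice point demands that the two be consecutive; hence $p_0$ sees at most two others and $g \leq 3$, a contradiction. If $\lw(\pint) = 2$, normalize $\pint \subseteq \RR \times [0, 2]$ and write $p_0 = (x_0, y_0)$. When $y_0 = 1$, visibility along the row $y = 1$ forces that row to have at most three lattice points, and the convexity estimate that the width of the row-$1$ interval is at least the average of the row-$0$ and row-$2$ widths then bounds the total number of lattice points on rows $0$ and $2$ by six, giving $g \leq 9$, again a contradiction.

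The remaining case $y_0 \in \{0, 2\}$ is the main obstacle. Assume WLOG that $y_0 = 0$. Visibility then forces at most three lattice points on row $0$ and at most one on row $2$ (since two consecutive integers cannot both lie at odd horizontal distance from $x_0$), while the middle row is unconstrained by visibility and can contain arbitrarily many lattice points when $\pint$ bulges outward. To close this case I would invoke the classification of lattice width $2$ panoptigons provided by Lemma~\ref{lemma:panoptigon_g1}, enumerate the pentagon-like shapes of $\pint$ that admit $g \geq 14$, and for each such shape argue that it cannot be realized as $\pint$ of any lattice polygon $P$ supporting a regular unimodular triangulation $\Delta$ in which $p_0$ is the center of a fan of $\Delta$-edges to every other interior lattice point. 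The expected obstruction is that the extreme middle-row vertices of $\pint$ produce corners whose outward-pushed supporting lines miss every lattice point, so any enclosing $P$ must either admit extra interior lattice points violating the condition $\pint = \conv(P^{\circ} \cap \ZZ^2)$, or break the fan structure of $\Delta$ at $p_0$. Carrying out this reconciliation between the panoptigon classification and the lattice-geometric tropical constraints is the technical heart of the argument.
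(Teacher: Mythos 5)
Your opening reduction---that the big-face property forces an interior lattice point $\Delta$-adjacent, hence visible, to all other interior lattice points, so that $\pint$ is a panoptigon---is exactly the paper's Lemma \ref{lemma:big_face_panoptigon_lemma}, and invoking Theorem \ref{theorem:at_most_13} to dispose of $\lw(\pint)\geq 3$ is also the paper's route. The gap lies in the cases $\lw(\pint)\leq 2$, where your argument is in one place wrong and in another explicitly unfinished. For $\lw(\pint)=1$ the lattice points of $\pint$ do \emph{not} lie on a single line: by Theorem \ref{theorem:lw_012} such a polygon is a trapezoid $T_{a,b}$ with lattice points on two horizontal rows, and by Lemma \ref{lemma:panoptigon_g0} it is a panoptigon whenever $a\leq 2$ with $b$ unbounded (e.g.\ $T_{2,100}$ is a lattice-width-$1$ panoptigon with $104$ lattice points), so visibility alone gives no bound and your claim $g\leq 3$ is false. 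In the case $\lw(\pint)=2$ with the panoptigon point at height $0$ or $2$ you correctly observe that the middle row is unconstrained by visibility---Lemma \ref{lemma:hyperelliptic_panoptigon} (not Lemma \ref{lemma:panoptigon_g1}, which concerns genus $1$) exhibits infinite families of such panoptigons---but you then only gesture at an ``expected obstruction'' and defer the ``technical heart.'' That heart is the content of the paper's Section \ref{section:lw3_and_4}, and without it the proof does not close.

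The missing idea is that $\pint$ must not merely be a panoptigon: it must be realizable as the interior polygon of some lattice polygon, which by Proposition \ref{prop:interior_maximal} is equivalent to the relaxed polygon $\pint^{(-1)}$ having lattice vertices. Proposition \ref{prop:lattice_width_3_maximal} shows this forces $a\geq \frac{1}{2}b-1$ for $T_{a,b}$, which together with the panoptigon condition $a\leq 2$ gives $b\leq 6$ and hence at most $10$ lattice points; Lemmas \ref{lemma:type1}--\ref{lemma:type3} impose the analogous inequalities on hyperelliptic interior polygons ($i\geq\frac{g}{2}+1$ and $j\geq\frac{g-1}{2}$ for Type 2, $i,j\geq\frac{g}{2}$ for Type 3), which are incompatible with the panoptigon requirement that $i$ or $j$ be small once $g$ grows. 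Corollary \ref{corollary:lw12_panoptigon_point_bound} packages all of this as $|\pint\cap\ZZ^2|\leq 11$ whenever $\lw(\pint)\leq 2$ and $\pint^{(-1)}$ is a lattice polygon, and combining that with Theorem \ref{theorem:at_most_13} yields $g\leq 13$. You would need to prove this relaxation analysis (or something equivalent) to complete your argument; as written, the width-$1$ and width-$2$ cases admit panoptigons of arbitrarily large genus and are only excluded by the interior-polygon constraint you did not use.
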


The idea behind the proof of this theorem is as follows.  If a big face graph $G$ is tropically planar, then it is dual to a regular unimodular triangulation of a lattice polygon $P$.  One of the interior lattice points $p$ of $P$ must be connected to all the other interior lattice points, so that the bounded face dual to $p$ can share an edge with all other bounded faces.  Thus, the convex hull of the interior lattice points of $P$ must be a panoptigon.  If that panoptigon has lattice width $3$ or more, then it can have at most $13$ lattice points, and so $G$ cannot have $g\geq 14$.  

For the case that the lattice width of the interior panoptigon is smaller, we need an understanding of which polygons of lattice width $1$ or $2$ can appear as the interior lattice points of another lattice polygon.  We obtain this in Propositions \ref{prop:lattice_width_3_maximal} and \ref{prop:lattice_width_4_maximal}, and can once again bound the genus of $G$.  In fact, if we are willing to rely on our computational enumeration of all panoptigons with lattice width at least $3$, then we can improve this result to say that big face graphs of genus $g\geq 12$ are not tropically planar. We will see that this bound is sharp.

Our paper is organized as follows.  In Section \ref{section:lattice_polygons} we present background on lattice polygons, including a description of all polygons of lattice width at most $2$.  In Section \ref{section:panoptigons} we classify all panoptigons.  In Section \ref{section:lw3_and_4} we classify all maximal polygons of lattice width $3$ or $4$.  Finally, in Section \ref{section:big_face_graphs} we prove Theorem \ref{theorem:big_face_graphs}.  Our computational results are then summarized in Appendix \ref{section:appendix}.

\medskip

\noindent \textbf{Acknowledgements.}  The authors thank Michael Joswig for many helpful conversations on tropically planar graphs, and for comments on an earlier draft of this paper, as well as two anonymous reviewers for many helpful suggestions and corrections. We would also like to thank Benjamin Lorenz, Andreas Paffenholz and Lars Kastner for helping with  uploading our results to polyDB. Ralph Morrison was supported by the Max Planck Institute for Mathematics in the Sciences, and by the Williams College Class of 1945 World Travel Fellowship. Ayush Kumar Tewari was supported by the Deutsche
Forschungsgemeinschaft (SFB-TRR 195 “Symbolic Tools in Mathematics and their Application”).

\section{Lattice polygons}\label{section:lattice_polygons}

In this section we recall important terminology and results regarding lattice polygons.  This includes the notion of maximal polygons, and of lattice width.
 Throughout we will assume that $P$ is a two-dimensional convex lattice polygon, unless otherwise stated.

The \emph{genus} of a polygon $P$ is the number of lattice points interior to $P$. A key fact is that for fixed $g\geq 1$, there are only finitely many lattice polygons of genus $g$, up to equivalence \cite[Theorem 9]{Castryck2012}.   We refer to the convex hull of the $g$ interior points of $P$ as the \emph{interior polygon of $P$}, denoted $\pint$.  If $\dim(\pint)=2$, we call $P$ \emph{non-hyperelliptic}; if $\dim(\pint)\leq 1$, we call $P$ \emph{hyperelliptic}. This terminology, due to \cite{Castryck2012}, is inspired by hyperelliptic curves, whose Newton polygons have all interior points collinear.  We say a lattice polygon $P$ is a \emph{maximal polygon} if it is maximal with respect to containment among all lattice polygons containing the same set of interior lattice points.

In the case that $P$ is non-hyperelliptic, there is a strong relationship between $P$ and $\pint$.  Let $\tau_1,\ldots,\tau_n$ be the one-dimensional faces of a (two-dimensional) lattice polygon $Q$.  Then $Q$ can be defined as an intersection of half-planes:
\[Q=\bigcap_{i=1}^n\mathcal{H}_{\tau_i},\]
where $\mathcal{H}_{\tau}=\{(x,y)\in\mathbb{R}^2\,|\,a_\tau x+b_\tau y\leq c_\tau\}$ is the set of all points on the same side of the line containing $\tau$ as $Q$.  Without loss of generality, we assume that $a_\tau,b_\tau,c_\tau\in\mathbb{Z}$ with $\gcd(a_\tau,b_\tau)=1$.  With this convention, we define
\[\mathcal{H}^{(-1)}_{\tau}=\{(x,y)\in\mathbb{R}^2\,:\,a_\tau x+b_\tau y\leq c_\tau+1\},\]
and from there define the \emph{relaxed polygon} of $Q$ as
\[Q^{(-1)}:=\bigcap_{i=1}^n\mathcal{H}^{(-1)}_{\tau_i}.\]
We can think of $Q^{(-1)}$ as the polygon we would get by ``moving out'' the edges of $Q$.  It is worth remarking that that $Q^{(-1)}$ need not be a lattice polygon.   We  denote $Q^{(-1)}\cap \mathcal{H}_{\tau_i}^{(-1)}$ as $\tau_i^{(-1)}$.  It is not necessarily the case that $\tau_i^{(-1)}$ is a one-dimensional face of $Q^{(-1)}$; however, if $Q^{(-1)}$ is a lattice polygon, then $Q^{(-1)}\cap \tau_i^{(-1)}$ must contain at least one lattice point, as proved in \cite[Lemma 2.2]{small2017_dim}.  Examples where $Q^{(-1)}$ is not a lattice polygon, and where $Q^{(-1)}$ is a lattice polygon  but an edge has collapsed, are illustrated in Figure \ref{figure:examples_relaxed_polygons}.  There is a very important case when we are guaranteed to have that $Q^{(-1)}$ is a lattice polygon, namely when $Q=\pint$ for some non-hyperelliptic lattice polygon $P$.

\begin{figure}[hbt]
\centering
\includegraphics{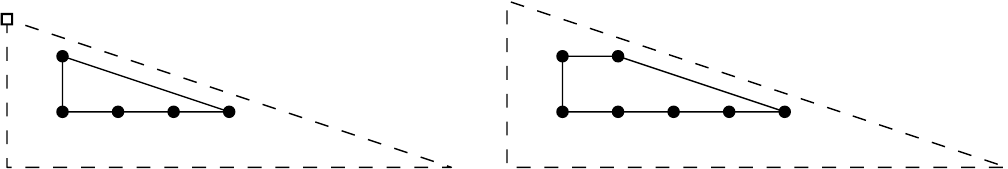}
\caption{Two lattice polygons, one with a relaxed polygon with a non-lattice vertex marked; and one with a collapsed edge in the relaxed (lattice) polygon}
\label{figure:examples_relaxed_polygons}

\end{figure}

\begin{prop}[\cite{Koelman}, \S 2.2]\label{prop:interior_maximal}
Let $P$ be a non-hyperelliptic lattice polygon, with interior polygon $\pint$.  Then $\pint^{(-1)}$ is a lattice polygon containing $P$ whose interior polygon is also $\pint$.  In particular, $\pint^{(-1)}$ is the unique maximal polygon with interior polygon $\pint$.
\end{prop}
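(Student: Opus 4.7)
The plan is to establish three things in order—(a) $P \subseteq \pint^{(-1)}$; (b) $\pint^{(-1)}$ is itself a lattice polygon; and (c) the interior polygon of $\pint^{(-1)}$ is $\pint$—and then observe that uniqueness of the maximal polygon follows immediately, since any lattice polygon $Q$ with interior polygon $\pint$ is contained in $\pint^{(-1)}$ by applying (a) with $Q$ in place of $P$.

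The main content is part (a), which I prove one edge of $\pint$ at a time. Fix an edge $\sigma$ of $\pint$ with primitive inequality $\ell_\sigma \leq c_\sigma$, and suppose for contradiction that some vertex $u$ of $P$ has $\ell_\sigma(u) = c_\sigma + K$ with $K \geq 2$. After a unimodular change of coordinates I may assume $\sigma$ lies on the $x$-axis and contains both $(0,0)$ and $(1,0)$, so that $\ell_\sigma = -y$, $c_\sigma = 0$, and $u = (u_x, -K)$ for some $u_x \in \mathbb{Z}$. Since $(0,0),(1,0) \in \pint \subseteq \operatorname{int}(P)$ and $u \in P$, a standard convexity argument gives $T \setminus \{u\} \subseteq \operatorname{int}(P)$ for the triangle $T = \operatorname{conv}((0,0),(1,0),u)$. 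The cross-section of $T$ with the line $y=-1$ is the segment $[u_x/K,\;(u_x+K-1)/K]$, of length $(K-1)/K$. Writing $u_x = qK + r$ with $0 \leq r < K$, a direct check shows this interval always contains an integer: namely $q$ if $r=0$, and $q+1$ if $r \geq 1$. The corresponding lattice point lies in $T \setminus \{u\} \subseteq \operatorname{int}(P)$ but satisfies $\ell_\sigma = 1 > 0 = c_\sigma$, so it is an interior lattice point of $P$ outside $\pint$, contradicting the definition of $\pint$.

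Part (c) is essentially bookkeeping: a lattice point $p$ lies in $\operatorname{int}(\pint^{(-1)})$ exactly when $\ell_\sigma(p) < c_\sigma + 1$ for every edge $\sigma$ of $\pint$, and by integrality this is equivalent to $\ell_\sigma(p) \leq c_\sigma$ for all $\sigma$, i.e.\ $p \in \pint$. Hence the interior lattice points of $\pint^{(-1)}$ are precisely those of $\pint$, whose convex hull is $\pint$ itself.

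The main remaining obstacle is part (b): each vertex of $\pint^{(-1)}$ is the intersection of two adjacent relaxed edge-lines, and for a general two-dimensional lattice polygon such an intersection need not be integral (for instance, the relaxed polygon of $\operatorname{conv}((0,0),(3,0),(1,3))$ has a vertex at $(-2/3,-1)$). The hypothesis to leverage is the actual existence of $P$ with $\pint_P = \pint$. My plan is to fix a vertex $v$ of $\pint$ and use $v \in \operatorname{int}(P) \subseteq \operatorname{int}(\pint^{(-1)})$ together with the fact that $P$'s vertices are lattice points of $\pint^{(-1)}$ to analyze the \emph{outward cone} at $v$ (the local region of $\pint^{(-1)} \setminus \pint$ near $v$): if the relaxed vertex there is non-integral, one can show this outward cone contains no lattice points of $\pint^{(-1)}$, so $P$ has no lattice vertex filling it, and $v$ cannot be strictly interior to $P$—a contradiction with $v \in \pint \subseteq \operatorname{int}(P)$.
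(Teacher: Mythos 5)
The paper offers no proof of this proposition to compare against: it is imported verbatim from Koelman's thesis. Judged on its own, your part (a) is correct and complete (the triangle $\operatorname{conv}((0,0),(1,0),u)$ plus the observation that its cross-section at height $-1$ has length $(K-1)/K$ and always traps an integer does produce an interior lattice point of $P$ outside $\pint$), part (c) is the routine integrality argument you describe, and the reduction of uniqueness to (a) is fine. The problem is part (b), which you acknowledge is only a plan — and the plan as stated contains an invalid inference.

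Normalize a vertex $v$ of $\pint$ to $(0,0)$ with one incident edge on $\{y=0\}$, $\pint\subseteq\{y\ge 0\}$, and the other incident edge in primitive direction $(p,q)$, $q\ge 1$; the relaxed lines are $y=-1$ and $qx-py=1$, meeting at $\left(\tfrac{1-p}{q},-1\right)$, which is integral iff $p\equiv 1\pmod q$. Set $s_1=-y$, $s_2=qx-py$, so your outward cone is $C=\{s_1>0, s_2>0\}$. You are right that if $p\not\equiv 1\pmod q$ then $C\cap\pint^{(-1)}$ contains no lattice point. But ``no lattice point (hence no vertex of $P$) in $C$'' does not yield $v\notin\operatorname{int}(P)$: the hypothesis $v\in\operatorname{int}(P)$ only forces $P$ to \emph{meet} the open cone $C$, and a convex polygon can poke into an open cone along an edge without any vertex or lattice point landing there — for instance a segment joining a lattice point with $(s_1,s_2)=(1,0)$ to one with $(s_1,s_2)=(0,1)$ passes through $C$. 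What saves the argument is the stronger fact that for $q\ge 2$ the patterns $(1,0)$ and $(0,1)$ are themselves unattainable at lattice points (they force $q\mid p$, resp.\ $q=1$), so that when $p\not\equiv 1\pmod q$ \emph{every} lattice point of $\pint^{(-1)}$ satisfies $s_1+s_2\le 0$; since $P$ is the convex hull of such points, $P$ lies in the closed half-plane $\{s_1+s_2\le 0\}$ through $v$, contradicting $v\in\operatorname{int}(P)$. You need this half-plane statement, not the cone statement. Finally, even once every pair of \emph{adjacent} relaxed edge-lines is known to meet in a lattice point, you must still exclude the collapse phenomenon the paper illustrates in Figure~\ref{figure:examples_relaxed_polygons}, in which a vertex of $\pint^{(-1)}$ arises as the intersection of two non-adjacent relaxed lines; this requires a further argument (e.g.\ showing the relaxed vertex actually lies in $P$, hence in $\pint^{(-1)}$), which your plan does not address.
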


If we are given a polygon $Q$ and we wish to know if there exists a lattice polygon $P$ with $\pint=Q$, it therefore suffices to compute the relaxed polygon $Q^{(-1)}$, and to check whether its vertices have integral coordinates.  This might fail because two adjacent edges $\tau_i$ and $\tau_{i+1}$ of $Q$ are relaxed to intersect at a non-integral vertex of $Q^{(-1)}$; we also might have that some $\tau_i^{(-1)}$ is completely lost, which cannot happen when $Q^{(-1)}$ is a lattice polygon by \cite[Lemma 2.2]{small2017_dim}.  Careful consideration of these obstructions will be helpful in classifying the maximal polygons of lattice widths $3$ and $4$ in Section \ref{section:lw3_and_4}.

An important tool in studying lattice polygons is the notion of \emph{lattice width}.  Let $P$ be a non-empty lattice polygon, and let $v=\langle a,b\rangle$ be a lattice direction with $\gcd(a,b)=1$.  The \emph{width of $P$ with respect to $v$} is the smallest integer $d$ for which there exists $m\in \mathbb{Z}$ such that the strip \[m\leq ay-bx\leq m+d\] contains $P$.  We denote this $d$ as $w(P,v)$.  The \emph{lattice width of $P$} is the minimal width over all possible choices of $v$:
\[\textrm{lw}(P)=\min_vw(P,v).\]
Any $v$ which achieves this minimum is called a \emph{lattice width direction for $P$}. 
Equivalently, $\textrm{lw}(P)$ is the smallest $d$ such that there exists a lattice polygon $P'$ equivalent to $P$ with $P'\subset \mathbb{R}\times[0,d]$.

We recall the following result connecting the lattice widths of a polygon and its interior polygon.  Let $T_d=\textrm{conv}((0,0),(d,0),(0,d))$ denote the standard triangle of degree $d$. 

\begin{lemma}[Theorem 4 in \cite{castryckcools-gonalities}]\label{lemma:lw_facts}
For a lattice polygon $P$ we have $\textrm{lw}(P)=\lw(P_{\textrm{int}})+2$, unless $P$ is equivalent to $T_d$ for some $d\geq 2$, in which case $\textrm{lw}(P)=\textrm{lw}(P_{\textrm{int}})+3=d$.

\end{lemma}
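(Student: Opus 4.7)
The inequality $\lw(P)\geq\lw(\pint)+2$ is immediate: after a unimodular change of coordinates we may assume $P\subseteq\mathbb{R}\times[0,\lw(P)]$, and any interior lattice point of $P$ lies in the topological interior of $P$, hence has $y$-coordinate strictly between $0$ and $\lw(P)$. Thus $\pint\subseteq\mathbb{R}\times[1,\lw(P)-1]$, so $\lw(\pint)\leq \lw(P)-2$.

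For the reverse direction, since lattice width is monotone under containment, I would first replace $P$ by the unique maximal polygon with the same interior and so assume $P$ is maximal. In the non-hyperelliptic case, Proposition \ref{prop:interior_maximal} then gives $P=\pint^{(-1)}$, reducing the problem to bounding the lattice width of a relaxed polygon. Pick a lattice width direction $v$ of $\pint$ and normalize so that $\pint\subseteq\mathbb{R}\times[0,\lw(\pint)]$. When both supporting lines $y=0$ and $y=\lw(\pint)$ actually carry edges of $\pint$, those edges are relaxed one lattice step outward and $\lw(P)\leq\lw(\pint)+2$ follows at once. When a horizontal supporting line meets $\pint$ only at a vertex, relaxation instead pushes the two incident edges outward, and their relaxed supporting lines can intersect beyond $y=\lw(\pint)+1$; a direct calculation with the slopes of those incident edges shows the extra excess on either side is at most $1$, giving in general $\lw(P)\leq\lw(\pint)+3$, with the full $+3$ arising exactly when the apex geometry on both sides forces $\pint$ to be a standard triangle $T_{d-3}$ and hence $P=T_d$. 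The hyperelliptic case ($\dim \pint\leq 1$) is then dispatched by the known classification of maximal polygons whose interior lattice points are collinear or absent, namely the trapezoidal hyperelliptic families together with $T_2$ and $T_3$; for each of these one computes $\lw(P)$ and $\lw(\pint)$ directly, and the $+2$ formula is verified in every instance except $T_2$ and $T_3$, which furnish the remaining members of the $T_d$ exception.

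The step I expect to be most delicate is the non-hyperelliptic apex analysis: showing that a simultaneous $+1$ of excess on both sides of the strip really does pin down $\pint$ to a standard triangle. The rigidity comes from $v$ being a \emph{lattice width} direction, so that no alternative direction produces a narrower strip after relaxation either, but leveraging this into a clean argument requires careful bookkeeping of the edge slopes around both apex vertices together with the observation that the relaxed vertex must still be a lattice point since $P$ is assumed maximal.
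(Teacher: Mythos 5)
The paper offers no proof of this lemma --- it is imported wholesale from \cite{castryckcools-gonalities} --- so your sketch can only be judged on its own terms rather than against an internal argument. The architecture (easy lower bound, reduction to the maximal polygon, relaxation analysis in a strip, hyperelliptic case by classification) is reasonable, and the lower bound is correct. But the central step is a genuine gap: you claim that ``a direct calculation with the slopes of those incident edges shows the extra excess on either side is at most $1$.'' No purely local computation at an apex can give this. Take $Q=\conv((0,0),(0,2),(1,0))$ and measure width vertically: the apex $(0,2)$ has incident edges on $x=0$ and $2x+y=2$, whose relaxations $x=-1$ and $2x+y=3$ meet at $(-1,5)$, so this single apex contributes an excess of $3$; replacing the second edge's normal $(2,1)$ by $(k,1)$ makes the apex excess $k+1$, which is unbounded. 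Of course the vertical direction is not a lattice width direction for this $Q$, and that is exactly the point: the slopes at the apex are identical whether or not the chosen direction is width-minimizing, so the bound can only come from the global minimality of $v$ --- which is precisely the argument you defer to ``careful bookkeeping.'' That deferred step is the entire content of the theorem. There is also an arithmetic mismatch: an extra excess of at most $1$ on \emph{each} side yields $\lw(\pint)+4$, not $+3$; to reach $+3$ you must additionally rule out both ends of the strip being bad apexes simultaneously, which the sketch does not address.

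A second, smaller gap: replacing $P$ by the maximal polygon $\pint^{(-1)}$ is lossy in exactly the exceptional case. If $\pint\cong T_{d-3}$ but $P$ is a proper subpolygon of $T_d$ --- for instance $\conv((1,0),(4,0),(0,4),(0,1))$, whose interior polygon is still $T_1$ --- the lemma asserts $\lw(P)=d-1$ (here $3$, achieved by the functional $x+y$), whereas your reduction only yields $\lw(P)\le\lw(T_d)=d$. You would need a separate argument that a polygon with interior $T_{d-3}$ achieving width $d$ in every direction must be all of $T_d$.
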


The following result tells us precisely which polygons have lattice width $1$ or $2$.  It is a slight reworking due to of a result due to \cite{Koelman}, also presented in \cite[Theorem 10]{Castryck2012}

\begin{theorem}\label{theorem:lw_012}
Let $P$ be a two-dimensional lattice polygon. If $\textrm{lw}(P)=1$, then $P$ is equivalent to
\[T_{a,b}:=\textrm{conv}((0,0),(0,1),(a,1),(b,0))\]
for some $a,b\in\mathbb{Z}$ with $0\leq a\leq b$ and $b\geq 1$.

If $\textrm{lw}(P)=2$, then up to equivalence either $P=T_2$; or $g(P)=1$ and $P\neq T_3$ (all such polygons are illustrated in Figure \ref{figure:g1_lw2}); or $g(P)\geq 2$.  In the latter case we have $\frac{1}{6}(g+3)(2g^2+15g+16)$ polygons, sorted into three types:
\begin{itemize}
    \item Type 1:
    \[\includegraphics{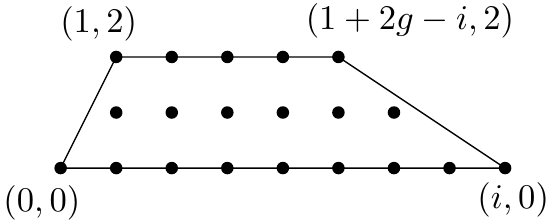}\]
    where $g\leq i\leq 2g$.
    \item Type 2:
        \[\includegraphics{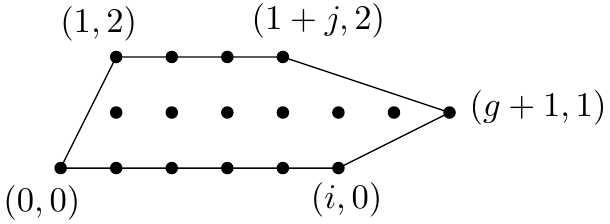}\]
where $0\leq i\leq g$ and $0\leq j\leq i$; or $g<i\leq 2g+1$ and $0\leq j\leq 2g-i+1$
    
    \item Type 3:
        \[\includegraphics{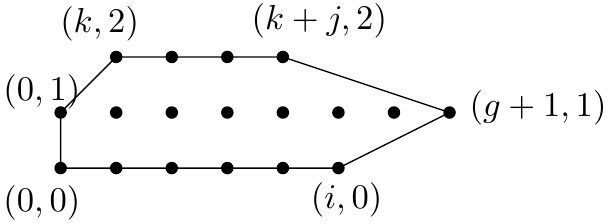}\]
where $0\leq k\leq g+1$ and $0\leq i\leq g+1-k$ and $0\leq j\leq i$; or $0\leq k\leq g+1$ and $g+1-k<i\leq 2g+2-2k$ and $0\leq j\leq 2g-i-2k+1$
\end{itemize}

\begin{figure}[hbt]
\centering
\includegraphics{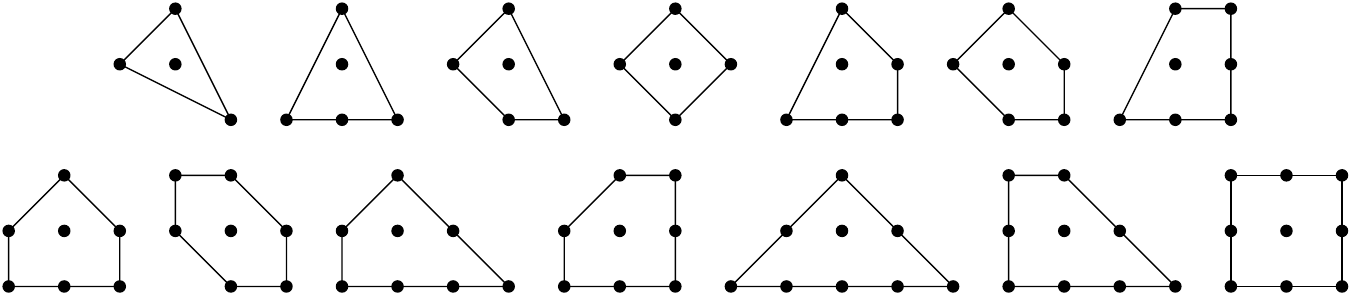}
\caption{The $14$ genus $1$ polygons with lattice width $2$}
\label{figure:g1_lw2}

\end{figure}

\end{theorem}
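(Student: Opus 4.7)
The plan is to handle the two lattice-width regimes separately. In each case I would use the definition of lattice width to embed $P$ in a horizontal strip of minimal height, and then normalize by the unimodular transformations that preserve such a strip: horizontal translations, shears $(x,y)\mapsto(x-ky,y)$, and the reflection $(x,y)\mapsto(x,h-y)$ where $h$ is the strip height.

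For $\lw(P)=1$, embed $P$ in $\mathbb{R}\times[0,1]$. Two-dimensionality forces both $P\cap(\mathbb{R}\times\{0\})$ and $P\cap(\mathbb{R}\times\{1\})$ to be nondegenerate horizontal segments with integer endpoints. A horizontal translation places the bottom-left endpoint at $(0,0)$; an integer shear then places the top-left endpoint at $(0,1)$; and, if necessary, the reflection $y\mapsto 1-y$ makes the top segment no longer than the bottom. Reading off the two right endpoints yields the canonical form $T_{a,b}$ with $0\leq a\leq b$, and $b\geq 1$ comes from two-dimensionality.

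For $\lw(P)=2$, embed $P$ in $\mathbb{R}\times[0,2]$. The subcases $g(P)=0$ and $g(P)=1$ involve only finitely many equivalence classes and can be enumerated directly: for $g=0$, the only two-dimensional candidate in the strip not already of lattice width $1$ is $T_2$, while for $g=1$ one produces the $14$ polygons pictured in Figure~\ref{figure:g1_lw2} (with $T_3$ excluded because Lemma~\ref{lemma:lw_facts} gives it lattice width $3$). For $g\geq 2$ the crucial structural step is to verify that $P$ is hyperelliptic: Lemma~\ref{lemma:lw_facts} gives $\lw(\pint)=0$ once the exceptional case $P\simeq T_d$ is ruled out (impossible for $g\geq 2$ and $\lw(P)=2$), so $\pint$ is collinear. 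After a further unimodular change of variables I may take $\pint=\conv((0,1),(g-1,1))$. The polygon $P$ is then determined by its intersections with $y=0$ and $y=2$, each a possibly-empty horizontal segment with integer endpoints, subject to the constraint that no new lattice points appear at height $1$ outside $\pint$. Classifying such pairs of segments by the four horizontal offsets of their endpoints relative to $\pint$ yields precisely Types 1, 2, and 3, with the stated ranges on $(i,j,k)$ being exactly the unique-representative conditions under the residual symmetries $x\mapsto -x$, $y\mapsto 2-y$, and integer shears that fix the line $y=1$ setwise. Summing cardinalities across the three families gives the closed form $\tfrac{1}{6}(g+3)(2g^2+15g+16)$.

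The main obstacle is the parameter bookkeeping in the $g\geq 2$ classification: verifying that the three types jointly cover every hyperelliptic polygon with one-dimensional $\pint$ of length $g-1$, contain no duplicates within a type, and do not overlap across types. Type~1 captures the case where one boundary row is empty, Type~2 the case of a single nonempty boundary row of bounded width, and Type~3 the general case with both boundary rows nonempty; distinguishing them amounts to carefully tracking how $x\mapsto -x$ and $y\mapsto 2-y$ act on the listed representatives. Since this is the classical hyperelliptic classification of Koelman, I would follow his parametrization and verify rather than rederive the count.
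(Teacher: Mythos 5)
Your argument is correct and follows essentially the same route as the paper: in both, the load-bearing steps are Lemma~\ref{lemma:lw_facts} (to identify the lattice-width-$2$, genus~$\geq 2$ polygons with the hyperelliptic ones and to isolate the exceptional triangles $T_d$) together with Koelman's classification \cite{Koelman} for the explicit lists of normal forms. Your strip-normalization sketch simply unpacks what the paper delegates wholesale to the cited classification, and you ultimately defer to that same source for the parameter ranges and the count.
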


\begin{proof}The classification proved in \cite{Koelman} was similar, except with polygons sorted by genus ($g=0$, $g=1$, and $g\geq 2$ with all interior lattice points collinear) rather than by lattice width.  We can translate their work into the desired result as follows.

For $\lw(P)=1$, we know $P$ has no interior lattice points, so $g=0$; all polygons of genus $0$ besides $T_2$ have lattice width $1$.  By \cite{Koelman} all genus $0$ polygons besides $T_2$ are equivalent to $T_{a,b}$
for some $a,b\in\mathbb{Z}$ with $0\leq a\leq b$ and $b\geq 1$.

For $\lw(P)=2$, we deal with the three cases of $g=0$, $g=1$, and $g\geq 2$.  If $g=0$, then the only polygon of lattice width $2$ is $T_2$.  If $P$ is a polygon with genus $g=1$, then by Lemma \ref{lemma:lw_facts} we know that $\lw(P)=\lw(\pint)+2=0+2=2$ unless $P$ is equivalent to $T_d$ for some $d$.  The only value of $d$ such that $T_d$ has genus $1$ is $d=3$, so every genus $1$ polygon except $T_3$ has lattice width $2$.

Finally, suppose $P$ is a polygon of lattice width $2$ and genus $g\geq 2$.  Since $\lw(T_d)=d$ and $g(T_2)=0$, we know $P\neq T_d$ for any $d$, and so $\lw(\pint)=\lw(P)-2=2-2=0$.  It follows that all the $g$ interior lattice points of $P$ must be collinear, and so $P$ is hyperelliptic. Conversely, if $P$ is a hyperelliptic polygon of genus $g\geq 2$, by definition the interior polygon $\pint$ has lattice width $0$. Since  no triangle $T_d$ has genus $g\geq 2$ with all its interior points collinear we may apply Lemma \ref{lemma:lw_facts} to conclude that $\lw(P)=\lw(\pint)+2=2$.  This means that for polygons of genus $g\geq 2$, being hyperelliptic is equivalent to having lattice width $2$.  Combined with the classification of hyperelliptic polygons in \cite{Koelman}, this completes the proof.
\end{proof}

A counterpart of lattice width is \emph{lattice diameter}.  Following \cite{bf}, the lattice diameter $\ell(P)$ is the length of the longest lattice line segment contained in the polygon $P$:
\[\ell(P)=\max\{|L\cap P\cap\mathbb{Z}^2|-1\,:\,\textrm{$L$ is a line}\}.\]
We define a \emph{lattice diameter direction} $\left<a,b\right>$ to be one such that there exists a line $L$ with slope vector $\left<a,b\right>$ with $|L\cap P\cap\mathbb{Z}^2|-1=\ell(P)$.  
We remark that there exist other works where lattice diameter is defined as the largest number of collinear lattice points in the polygon $P$ \cite{alarcon}; this is simply one more than the convention we set above.  The following result relates $\ell(P)$ to $\lw(P)$.

\begin{theorem}[\cite{bf}, Theorem 3]
We have $\lw(P)\leq \lfloor\frac{4}{3}\ell(P)\rfloor+1$.
\end{theorem}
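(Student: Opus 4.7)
The plan is to find, inside $P$, a long collinear chain of lattice points by exploiting the structure forced by the lattice width. After a unimodular change of coordinates, which preserves both $\lw$ and $\ell$, I may assume $P\subseteq \mathbb{R}\times [0,w]$ with $w=\lw(P)$ and that $P$ touches both $y=0$ and $y=w$. Since the extremes are attained at vertices of $P$, which are lattice points, $P$ contains some $(a,0)$ and some $(b,w)$.

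First I would examine the horizontal slices $I_k := P\cap\{y=k\}$ for $k=0,1,\ldots,w$. Each slice is a segment $[\alpha_k,\beta_k]\times\{k\}$ and contains a consecutive lattice chain of length $\lfloor\beta_k\rfloor-\lceil\alpha_k\rceil$, which is at most $\ell(P)$. By convexity the endpoints $\alpha_k,\beta_k$ vary concavely/convexly in $k$, so if some single slice produces a chain of length $\geq \lceil 3(w-1)/4\rceil$ we are immediately done.

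The harder case is when every horizontal row is short. Here I would use the minimality of the vertical width: for every primitive lattice direction $v=(1,j)$, shearing $P$ by slope $j$ preserves the $y$-extent $w$, so $w(P,v)\geq w$ for all $j\in\mathbb{Z}$. This rigidly constrains how the left and right boundaries of $P$ can slant; in particular, it forces the existence of a small integer $q$ and a line of slope $\pm 1/q$ through $(a,0)$ that remains in $P$ up to height $w$ and meets $P$ in roughly $\lfloor w/q\rfloor+1$ lattice points, giving a lattice chain of length $\approx w/q$. Balancing the horizontal bound $\ell\geq$ slice length against this oblique bound $\ell\geq w/q$ is optimized at $q=3$, producing the coefficient $4/3$.

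The main obstacle is making this balancing precise enough to extract exactly $\lfloor 4\ell(P)/3\rfloor+1$ rather than a looser constant. The extremal configurations are long parallelogram-like polygons whose longest lattice segment has slope $\pm 1/3$ relative to the width direction, and handling the floor and parity correctly requires a careful case split on the admissible slopes of the lateral edges of $P$, together with a verification that when neither horizontal nor slope-$\pm1/3$ chains are long, some shorter shear provides a contradiction with $\lw(P)=w$.
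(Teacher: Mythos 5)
First, a point of order: the paper does not prove this statement --- it is imported verbatim from B\'ar\'any and F\"uredi \cite{bf} and used as a black box (its only role is in the appendix, to pin down $\ell(P)=2$ and $\lw(P)=3$ in Proposition \ref{prop:possibilities_ld2}). So there is no in-paper argument to compare against, and your proposal has to stand on its own as a proof of the cited result. As written, it does not.

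The first half is essentially content-free: ``if some horizontal row contains a chain of length at least $\lceil 3(w-1)/4\rceil$ we are done'' is just the theorem restricted to one direction. All the difficulty sits in your ``harder case,'' and there the two key assertions are unsupported and numerically inconsistent with the target. You claim that minimality of the vertical width forces a line of slope $\pm 1/q$ through $(a,0)$ staying in $P$ up to height $w$ and meeting it in about $\lfloor w/q\rfloor+1$ lattice points. A lattice line of slope $1/q$ (direction $(q,1)$) through a lattice point meets every integer height, so such a segment would carry $w+1$ lattice points and give $\ell(P)\geq w$ --- which is false in general: e.g.\ $\conv((0,0),(2,1),(1,2))$ has $\ell=1$ and $\lw=2$, and the theorem is sharp precisely because $\lw$ can exceed $\ell$. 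If instead you mean direction $(1,q)$, the chain has $\lfloor w/q\rfloor+1$ points and yields only $\ell\geq \lfloor w/q\rfloor$, i.e.\ roughly $w\leq q\ell+q$; at $q=3$ this is $w\leq 3\ell+2$, nowhere near $w\leq\frac{4}{3}\ell+1$. No mechanism is offered by which shortness of the horizontal rows lengthens the oblique chain so that the two bounds ``balance'' at $4/3$; the coefficient is simply asserted. Likewise, the claim that $w(P,\langle 1,j\rangle)\geq w$ for all shears ``rigidly constrains'' the lateral boundary into admitting such a line is exactly the content to be proved. For reference, B\'ar\'any and F\"uredi argue in the opposite direction: they fix a segment realizing $\ell(P)$, show $P$ must lie inside an explicit region determined by that segment (else a longer lattice segment would appear), and then compute the lattice width of that region; the $4/3$ arises because in the extremal configurations the long lattice segment is oblique to the width direction. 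Your outline would need that entire containment argument supplied before it could be called a proof.
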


We now present background material on triangulations and tropical curves.  Assume for the remainder of the section that $P$ is a lattice polygon of genus $g\geq 2$.  A \emph{unimodular triangulation} of $P$ is a subdivision of $P$ into lattice triangles of area $\frac{1}{2}$ each.  Such a triangulation $\Delta$ is called \emph{regular} if there exists a height function $\omega:P\cap\mathbb{Z}^2\rightarrow \mathbb{R}$ inducing $\Delta$.  This means that $\Delta$ is the projection of the lower convex hull of the image of $\omega$ back onto $P$.  See \cite{triangulations} for details on regular triangulations.

Given a regular unimodular triangulation $\Delta$ of a lattice polygon $P$, we can consider the \emph{weak dual graph} of $\Delta$, which consists of $1$ vertex for each elementary triangle, with two vertices connected if and only if the corresponding triangles share an edge.  Each vertex in this graph has degree $1$, $2$, or $3$, depending on how many edges the corresponding triangle has on the boundary of $P$.  We transform the weak dual graph of $\Delta$ into a $3$-regular graph as follows:  first, iteratively delete any $1$-valent vertices and their attached edges.  This will yield a graph with all vertices of degree $2$ or $3$.  Remove each degree $2$ vertex by concatenating the two edges incident to it.  Since we have assumed that $g(P)\geq 2$, the end result is a $3$-regular graph $G$ (with loops and parallel edges allowed).  We call $G$ the \emph{skeleton} associated to $\Delta$.  Any $G$ that arises from such a procedure is called a \emph{tropically planar graph}.  An example of a regular unimodular triangulation, the weak dual graph, and the tropically planar skeleton are pictured in Figure \ref{figure:tropically_planar_full}.  Note that there is a one-to-one correspondence between the interior lattice points of $P$ and the bounded faces of $G$ in this embedding, where two faces of $G$ share an edge if and only if the corresponding interior lattice points are connected by an edge in $\Delta$.

\begin{figure}[hbt]
\centering
\includegraphics[scale=0.6]{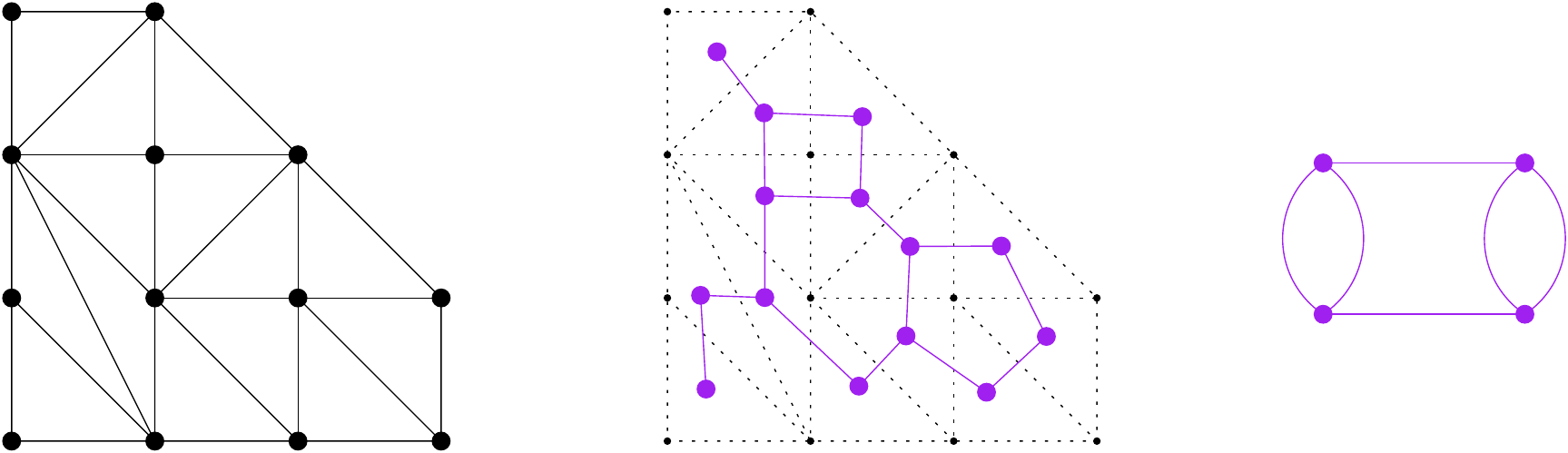}
\caption{A regular unimodular triangulation of a polygon, the weak dual graph of the triangulation, and the corresponding tropically planar skeleton}
\label{figure:tropically_planar_full}

\end{figure}

It is worth remarking that we could still construct a graph $G$ from a non-regular triangulation.  The reason that we insist that $\Delta$ is regular is so that the graph $G$ appears as a subset of a smooth tropical plane curve, which is a balanced $1$-dimensional polyhedral complex that is dual to a regular unimodular triangulation of a lattice polygon; see \cite{ms}. (Indeed, the regularity is necessary if we wish to endow a skeleton with the structure of a \emph{metric graph}, with lengths assigned to its edges, as explored in \cite{BJMS} and \cite{small2017_dim}.) Most of the results that we prove in this paper, and that we recall for the remainder of this section, also hold if we expand to graphs that arise as dual skeleta of \emph{any} unimodular triangulation of a lattice polygon.

The first Betti number of a tropically planar graph, also known as its genus\footnote{This terminology comes from \cite{bn} and is motivated by algebraic geometry; it is unrelated to the notion of graph genus defined in terms of embeddings on surfaces.  The first Betti number of a graph is also sometimes called its \emph{cyclomatic number}.}, is equal to the number of interior lattice points of the lattice polygon $P$ giving rise to it.  It is also equal to the number of bounded faces in any planar embedding of the graph.  A systematic method of computing all tropically planar graphs of genus $g$ was designed and implemented in \cite{BJMS} for $g\leq 5$. The algorithm is brute-force, and works by considering all maximal lattice polygons of genus $g$, finding all regular unimodular triangulations of them, and computing the dual skeleta.  These computations were pushed up to $g= 7$ in \cite{small2017_tpg}. In general there is no known method of checking whether an arbitrary graph is tropically planar short of this massive computation.

A fruitful direction in the study of tropically planar graphs has been finding properties or patterns that are forbidden in such graphs, so as to quickly rule out particular examples.  Since the graph before skeletonization is dual to a unimodular triangulation of a polygon,  any tropically planar graph is $3$-regular, connected, and planar.  Several additional constraints are summarized in the following result.

\begin{theorem}[\cite{cdmy}, Proposition 4.1; \cite{small2017_tpg}, Theorem 3.4; \cite{joswigtewari}, Theorems 10 and 14]

Suppose that $G$ is a $3$-regular graph of genus $g$ of one of the forms illustrated in Figure \ref{figure:forbidden_patterns}, where each gray box represents a subgraph of genus at least $1$.  If $G$ is tropically planar, then it must have either the third or fourth forms, with $g= 4$ for the third form and $g\leq 5$ in the fourth form.  In particular, if $g\geq 6$, then $G$ is not tropically planar.

\end{theorem}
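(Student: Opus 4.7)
The plan is to exploit the duality between the skeleton $G$ and the triangulation $\Delta$ of a lattice polygon $P$. Recall that if $G$ is tropically planar of genus $g$, then $G$ is dual to a regular unimodular triangulation $\Delta$ of some lattice polygon $P$ with exactly $g$ interior lattice points, where bounded faces of $G$ correspond bijectively to interior lattice points of $P$, and two bounded faces share an edge of $G$ precisely when the corresponding interior lattice points are joined by an edge of $\Delta$. Each degree-$3$ vertex of the pre-skeletonization dual lies inside a triangle, and any cut-set in $G$ translates into a geometric cut-set in $\Delta$ separating the interior lattice points of $P$.

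First I would handle the two forms that are ruled out outright. A cut vertex of $G$ separating into three positive-genus pieces (a ``sprawling'' configuration) would correspond to a single triangle $T\subset \Delta$ whose three vertices are all interior lattice points of $P$, and whose removal partitions the interior lattice points into three nonempty sets. One argues that each such set must lie in its own open half-plane determined by an edge of $T$, and uses convexity of $\pint$ to deduce that the vertices of $T$ all lie on the boundary of $\pint$, forcing the three groups of interior points to violate convexity. A similar argument, with a single interior edge of $\Delta$ whose endpoints are interior lattice points playing the role of the separating triangle, rules out the ``crowded'' form.

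For the remaining two forms, which are permitted only for small $g$, the strategy is to read off from the combinatorial structure of the two big faces shared along many edges that the interior polygon $\pint$ is forced into a narrow strip. Concretely, a bounded face of large length in $G$ corresponds to an interior lattice point of $P$ connected to many others, which through Lemma \ref{lemma:lw_facts} and the hyperelliptic classification in Theorem \ref{theorem:lw_012} bounds $\lw(\pint)$ and hence $\lw(P)$. One then enumerates the possible equivalence classes of $P$ compatible with the given pattern, using Proposition \ref{prop:interior_maximal} to reduce to a finite list of maximal polygons, and verifies that triangulations realizing the pattern exist only for the stated values of $g$.

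The main obstacle is the last step: producing, for each small value of $g$ in forms three and four, either an explicit triangulation realizing the pattern or a combinatorial obstruction ruling it out. The obstructions are typically lattice-geometric, for example showing that the required sequence of triangles forming the outer face would have to exit the polygon or cross an already-used edge; the positive examples must be exhibited by hand. Since this analysis is finite once $\lw(P)$ is bounded, the argument closes with a concrete case check rather than any deep new idea.
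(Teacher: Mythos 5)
The paper does not actually prove this theorem: it is imported verbatim from \cite{cdmy}, \cite{small2017_tpg}, and \cite{joswigtewari}, and the only indication of proof given is the remark that every cut-edge of a tropically planar graph is dual to a split of the polygon into two positive-genus subpolygons, after which one shows the required configurations of splits cannot coexist. Your plan for the first two forms is in that spirit, but the execution has problems. For the sprawling form, the dual object to the separating vertex is indeed a triangle of $\Delta$, but the contradiction does not come from ``the vertices of $T$ lie on the boundary of $\pint$, violating convexity'' --- a triangle inscribed in a convex polygon with lattice points of the polygon on all three sides is entirely possible. The actual obstruction is that each of the three edges of $T$ would have to be a split separating off positive genus, and one needs a genus/area count or a careful analysis of three concurrent splits to rule this out; that step is missing. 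Also, ``crowded'' is not one of the four pictured forms; in this paper crowdedness is a separate criterion (from \cite{morrisonhyp}) unrelated to Figure \ref{figure:forbidden_patterns}.

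The more serious gap is in your treatment of the third and fourth forms, where the sharp conclusions $g=4$ and $g\leq 5$ live. You assert that once $\lw(P)$ (or $\lw(\pint)$) is bounded, the analysis reduces to ``a finite list of maximal polygons'' and closes with a concrete case check. But bounding the lattice width does not yield finiteness: there are infinitely many lattice polygons of lattice width $2$ up to equivalence (the hyperelliptic polygons of Theorem \ref{theorem:lw_012}), and infinitely many of them occur as interior polygons --- handling exactly this infinitude is why the present paper needs Propositions \ref{prop:lattice_width_3_maximal} and \ref{prop:lattice_width_4_maximal} and the panoptigon classification. Your proposal gives no mechanism that would terminate in the specific bounds $g=4$ and $g\leq 5$; those require the structural arguments of \cite{small2017_tpg} and \cite{joswigtewari} about how multiple splits can be arranged in a single polygon, which is precisely the content your sketch defers to ``a concrete case check.'' As written, the argument would not close.
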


\begin{figure}[hbt]
\centering
\includegraphics[scale=0.7]{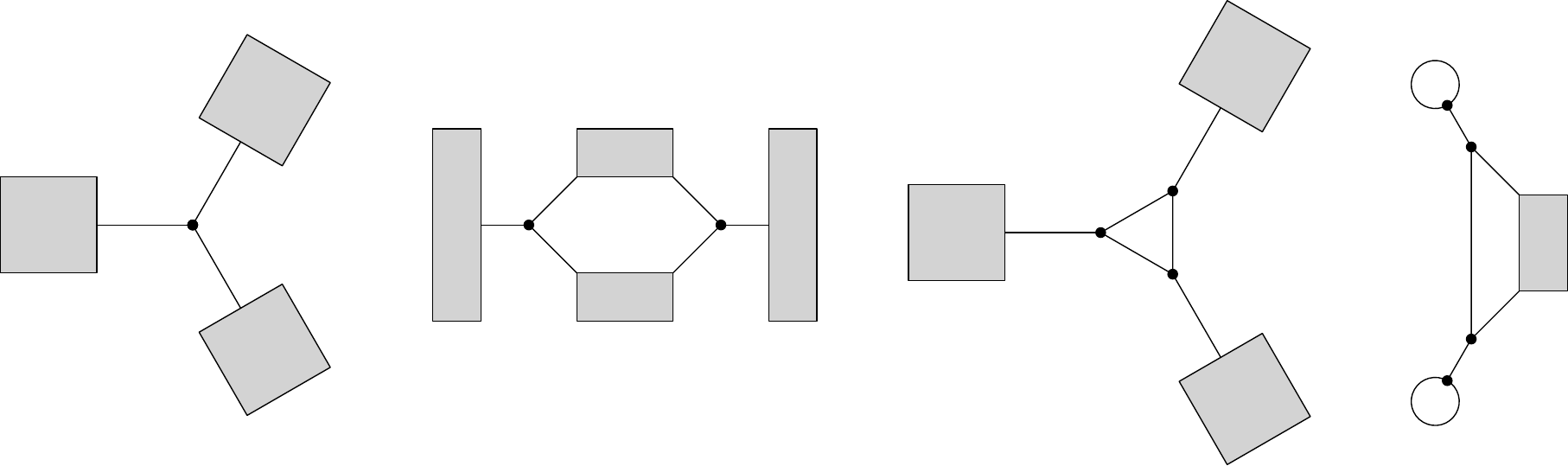}
\caption{Forbidden patterns in tropically planar graphs of genus $g\geq 6$}
\label{figure:forbidden_patterns}

\end{figure}

The proofs of these results all use the following observation:  any cut-edge in a tropically planar graph must arise from a split in the dual unimodular triangulation that divides the polygon into two polygons of positive genus. From there, one argues that collections of such splits cannot appear in lattice polygons in ways that would give rise to graphs of the pictured forms.  For planar graphs that are $2$-edge-connected and thus have no cut-edges, the only known general criterion to rule out tropical planarity is the notion of crowdedness \cite{morrisonhyp}.  However, crowded graphs are ones that cannot be dual to \emph{any} triangulation of \emph{any} point set in $\mathbb{R}^2$, regardless of whether or not the point set comes from a convex lattice polygon; thus it is not especially interesting that crowded graphs are not tropically planar.  In Section \ref{section:big_face_graphs} we will find a family of $2$-edge-connected, $3$-regular  planar graphs that are not crowded but are still not tropically planar, the first known such examples.

\section{A classification of all panoptigons}\label{section:panoptigons}

Let $P$ be a convex lattice polygon.  Recall from the introduction that $P$ is a {panoptigon} if there is lattice point $p\in P\cap\mathbb{Z}^2$  such that every other point in $P\cap\mathbb{Z}^2$ is visible from $p$.  In this section we will classify all panoptigons, stratified by a combination of genus and lattice width.  We begin with the panoptigons of genus $0$.

\begin{lemma}\label{lemma:panoptigon_g0}
Let $P$ be a panoptigon of genus $0$.  Then $P$ is one of the following polygons, up to lattice equivalence:
\[\includegraphics[]{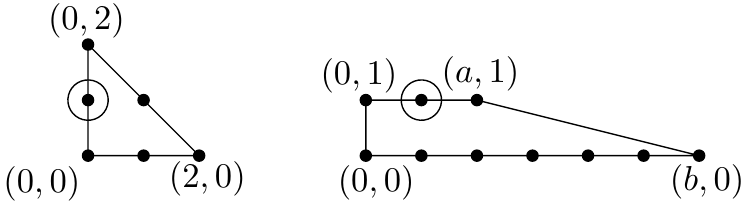}\]
where $0\leq a\leq\min\{2,b\}$.
\end{lemma}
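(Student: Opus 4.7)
The plan is to invoke the classification in Theorem \ref{theorem:lw_012}, which tells us that every genus $0$ lattice polygon is equivalent either to $T_2$ or to a trapezoid $T_{a,b} = \textrm{conv}((0,0),(0,1),(a,1),(b,0))$ for some $0 \leq a \leq b$, $b \geq 1$. Since the panoptigon property is invariant under unimodular transformations, it then suffices to check which members of this list are panoptigons.

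First I would dispatch $T_2$ by direct inspection: take $p = (1,1)$ and verify via the gcd criterion that $p$ is visible from each of the other five lattice points $(0,0), (1,0), (2,0), (0,1), (0,2)$ of $T_2$. That handles the lattice width $2$ case in one stroke.

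For $T_{a,b}$, the lattice points split into a top row $\{(i,1) : 0 \leq i \leq a\}$ and a bottom row $\{(j,0) : 0 \leq j \leq b\}$. The crucial observation is that any top-row point and any bottom-row point are mutually visible, because $\gcd(i-j,1)=1$; within a single row, however, two points are visible iff they are lattice-adjacent. Thus a candidate panoptigon point $p$ sees the whole of the other row automatically, and the question reduces entirely to whether $p$ can see every other point in its own row. Its own row can therefore contain at most three points, with $p$ the middle one.

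This forces either $p$ to lie in the top row with $a \leq 2$, or $p$ to lie in the bottom row with $b \leq 2$; since $a \leq b$, the second case is subsumed by the first. Conversely, when $a \leq 2$, the middle top-row point (or either top-row point if $a < 2$) is a panoptigon point, giving the condition $0 \leq a \leq \min\{2,b\}$ claimed in the lemma. The only real obstacle is the careful row bookkeeping and remembering to combine the bound with the normalization $a \leq b$; the visibility check itself is immediate from the gcd criterion.
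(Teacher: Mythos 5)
Your proposal is correct and follows essentially the same route as the paper: reduce to the Koelman classification ($T_2$ and the trapezoids $T_{a,b}$), observe that cross-row visibility is automatic while within a row a point sees only its neighbors, and conclude that a panoptigon point's row can hold at most three points, which together with $a\le b$ gives exactly the condition $a\le\min\{2,b\}$. Your treatment is if anything slightly more explicit than the paper's (spelling out the gcd checks and the bottom-row case being subsumed), but there is no substantive difference.
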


\begin{proof}
By \cite{Koelman}, any genus $0$ polygon is equivalent either to the triangle $T_2$, or to the (possibly degenerate) trapezoid $T_{a,b}$ where $0\leq a\leq b$ and $1\leq b$.  The triangle of degree $2$ is a panoptigon, as any non-vertex lattice point can see every other lattice point.  For $T_{a,b}$, we note that if $a\geq 3$ then the polygon is not a panoptigon:  each lattice point $p$ is on a row with at least $3$ other lattice points, not all of which can be visible from $p$ since the $4$ (or more) points in that row are collinear.  However, if $a\leq 2$, then a point $p$ can be chosen on the top row that can see the other $a$ points on the top row, as well as all points on the bottom row.  Thus $T_{a,b}$ is a panoptigon if and only if $a\leq 2$.
\end{proof}

For polygons with exactly one interior lattice point, there is no obstruction to being a panoptigon.

\begin{lemma}\label{lemma:panoptigon_g1}
If $P$ is a polygon of genus $1$, then $P$ is a panoptigon.
\end{lemma}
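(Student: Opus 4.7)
The plan is to show that the unique interior lattice point of $P$ serves as a panoptigon point. Let $p$ denote this interior lattice point, and let $q \in P \cap \mathbb{Z}^2$ be any other lattice point of $P$ (necessarily a boundary lattice point, since $p$ is the only interior one). I want to prove that $p$ and $q$ are visible to one another, i.e., that the segment $\overline{pq}$ contains no lattice point other than $p$ and $q$.

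I would argue by contradiction: suppose $r$ is a lattice point lying strictly between $p$ and $q$ on $\overline{pq}$. The key observation is a standard convexity fact: if $p$ lies in the topological interior of a convex set $P$ and $q \in P$, then every point of the half-open segment from $p$ toward $q$ (excluding $q$ itself) lies in the interior of $P$. Applying this to $r$, which is in the open segment $(p,q)$, we conclude $r$ is an interior point of $P$. Since $r$ is also a lattice point, $r$ is an interior lattice point of $P$. But $P$ has genus $1$, meaning $p$ is the unique interior lattice point, forcing $r = p$, contradicting that $r$ lies strictly between $p$ and $q$.

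Hence no such $r$ exists, $q$ is visible from $p$, and since $q$ was an arbitrary lattice point of $P$, every lattice point of $P$ is visible from $p$. Therefore $P$ is a panoptigon with panoptigon point $p$.

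I do not anticipate any obstacles here: the whole argument is a one-line consequence of convexity plus the defining property of genus $1$. The only subtlety is making sure to invoke the correct topological statement (that the open segment from an interior point to any point of a convex set lies in the interior), but this is standard and requires no new machinery.
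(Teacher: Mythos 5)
Your proposal is correct and follows essentially the same route as the paper: both identify the unique interior lattice point $p$ as the panoptigon point and use the standard convexity fact that the relative interior of $\overline{pq}$ lies in the interior of $P$, so any intermediate lattice point would have to be an interior lattice point, hence equal to $p$. The only cosmetic difference is that you phrase it as a proof by contradiction while the paper states it directly.
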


\begin{proof}  Let $p$ be the unique interior lattice point of $P$, and let $q$ be any other lattice point of $P$.  Since $g(P)=1$, the point $q$ must be on the boundary.  By convexity, the line segment $\overline{pq}$ must have its relative interior contained in the interior of the polygon, and so the line segment does not intersect $\partial P$ outside of $q$.  Since $p$ is the only interior lattice point, we have that the only lattice points of $\overline{pq}$ are its endpoints.   It follows that $q$ is visible from $p$ for all $q\in P\cap\mathbb{Z}^2-\{p\}$.  We conclude that $P$ is a panoptigon with panoptigon point $p$.
\end{proof}

We now consider hyperelliptic polygons of genus $g\geq 2$.  We will characterize precisely which of these are panoptigons based on the classification of them in Theorem \ref{theorem:lw_012} into Types 1, 2, and 3.  Any hyperelliptic polygon can be put into one of these forms in the horizontal strip $\mathbb{R}\times[0,2]$; thus we may say a lattice point $(a,b)$ of such a polygon is at height $b$, where every point is either at height $0$, height $1$, or height $2$.

\begin{lemma}\label{lemma:hyperelliptic_panoptigon}
Let $P$ be a hyperelliptic polygon of genus $g\geq 2$,  transformed so that it of one of the forms presented in Theorem \ref{theorem:lw_012}.  Then $P$ is a panoptigon if and only if
\begin{itemize}
    \item $P$ is of Type 1, with $g\leq 3$; or
    \item $P$ is of Type 2, either with $g\leq 2$, or with $j=0$ and $0\leq i\leq 1$; or
    \item  $P$ is of Type 3, either with $j=0$ and $i\leq 2$, with $k$ odd if $i=0$ and $k$ even if $i=2$; or with $i=0$ and $j\leq 2$, and $k$ odd if $j=0$ and $k$ even if $j=2$.
\end{itemize}
\end{lemma}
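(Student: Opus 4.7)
The plan is to normalize $P$ to lie in the strip $\mathbb{R}\times[0,2]$ with its $g$ interior lattice points equal to the segment from $(1,1)$ to $(g,1)$, and then to analyze the possible panoptigon points $p$ of $P$ according to the height (the $y$-coordinate) of $p$. Depending on the Type, the polygon carries additional boundary lattice points on the line $y=1$: zero for Type 1, one for Type 2, and two for Type 3. It also carries various boundary lattice points on the rows $y=0$ and $y=2$ whose numbers are controlled by the parameters $i$, $j$, $k$.

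Three visibility calculations in this setup will drive the entire argument. First, two lattice points at heights differing by exactly $1$ are always visible, since the segment joining them has vertical extent $1$ and hence no interior lattice point. Second, two lattice points at the same height are visible if and only if they are immediate horizontal neighbors. Third, two lattice points $(a,0)$ and $(b,2)$ are visible if and only if their midpoint $((a+b)/2,1)$ fails to be a lattice point, i.e., if and only if $a+b$ is odd.

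If $p$ lies at height $1$, then $p$ is collinear with every other lattice point on $y=1$, and the same-height rule forces the line $y=1$ to carry at most three lattice points of $P$. Counting the $g$ interior points together with the type-dependent number of boundary lattice points on $y=1$ yields $g\leq 3$ for Type 1, $g\leq 2$ for Type 2, and impossibility for Type 3; conversely, when these bounds are met, taking $p$ to be the middle lattice point on $y=1$ produces a panoptigon point, since all other lattice points sit at height $0$ or $2$ and are automatically visible from height $1$. If instead $p$ lies at height $0$, with the case of height $2$ being symmetric, then $p$ sees every height-$1$ point automatically, and the only remaining constraints come from the same-height and opposite-height rules: the row $y=2$ is a contiguous horizontal interval whose lattice points must all share the opposite $x$-parity from $p$, forcing this row to contain at most one lattice point, while $p$'s own row contains at most three lattice points with $p$ in the middle.

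It remains to translate these row-size bounds and the parity condition into the parametric descriptions of each Type. For Type 2 with $g\geq 3$, the condition $j=0$ collapses one row to a single lattice point, $i\leq 1$ caps $p$'s row at two lattice points, and the required parity is satisfied automatically by the fixed shape of Type 2. For Type 3, the shift parameter $k$ controls the horizontal offset between the two rows, and the subcases $i\in\{0,1,2\}$ (symmetrically $j\in\{0,1,2\}$) correspond to whether $p$'s position on its row is forced ($i\in\{0,2\}$) or flexible ($i=1$); in the forced cases, the $x$-coordinate of the opposite row's unique lattice point depends on the parity of $k$, yielding exactly the stated parity conditions, while for $i=1$ either choice of $p$ supplies the correct parity. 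The main obstacle is the careful bookkeeping for Type 3, where the shift $k$ interacts nontrivially with both row widths and both orientations (height $0$ versus height $2$) of $p$ must be examined to recover both symmetric subcases in the statement.
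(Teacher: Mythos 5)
Your proposal is correct and follows essentially the same route as the paper: the same three visibility rules (at most three collinear lattice points on the panoptigon point's own row, automatic visibility between adjacent heights, and the midpoint-parity criterion between heights $0$ and $2$), followed by the same case analysis over the height of $p$ and the three Types. Two bookkeeping steps you leave implicit should be made explicit: the height-$0$/height-$2$ row bounds must also be applied to Type 1 with $g\geq 4$ (its bottom row has $i+1\geq g+1\geq 5$ points, so no boundary panoptigon point can exist there), and the exclusion of $i=2$ for Type 2 comes from the parity rule (the forced middle point $(1,0)$ cannot see $(1,2)$), not from the row-size cap alone.
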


For the reader's convenience we recall the polygons of Types 1, 2, and 3 in Figure \ref{figure:hyp_all_types}.

\begin{figure}[hbt]
\centering
\includegraphics[scale=0.8]{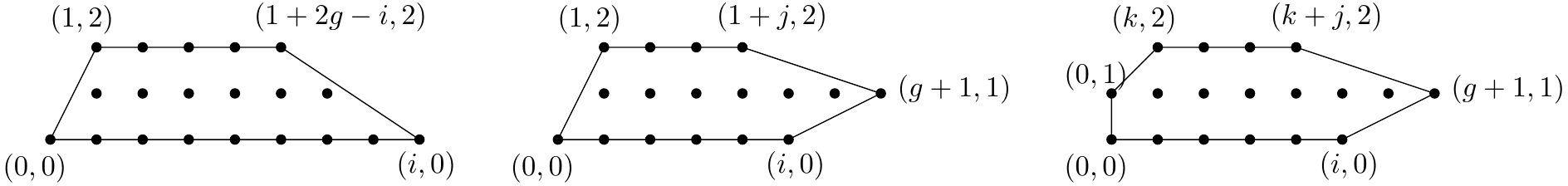}
\caption{Hyperelliptic polygons of Types 1, 2, and 3}
\label{figure:hyp_all_types}

\end{figure}

\begin{proof}
We start by making the following observations.  If $p=(a,b)$ is a panoptigon point for a hyperelliptic polygon $P$, then there must be at most $3$ points at height $b$; and if there are exactly $3$, then $p$ must be the middle such point.  We also make several remarks in the case that $b\in\{0,2\}$.  There are no obstructions to a point at height $b$ seeing a point at height $1$, so we will not concern ourselves with this.  Choose $b'\in\{0,2\}$ distinct from $b$, and suppose height $b'$ has $2$ or more lattice points; then two of those points have the form $q=(a,b')$ and $q'=(a+1,b')$.  We claim that $p$ cannot view both $q$ and $q'$. Writing $p=(a,b)$, the midpoints of the line segments $\overline{pq}$ and $\overline{pq'}$ have coordinates $\left(\frac{a+a'}{2},1\right)$ and $\left(\frac{a+a'+1}{2},1\right)$, respectively.  Exactly one of $\frac{a+a'}{2}$ and $\frac{a+a'+1}{2}$ is an integer, meaning that either $q$ or $q'$ is not visible from $p$.  So, if $p=(a,b)$ is a panoptigon point at height $b\in\{0,2\}$, there must be exactly one lattice point $q=(a',b')$ at height $b'\in \{0,2\}$ with $b'\neq b$; moreover, we  must have that $a-a'$ is odd. 

We are ready to determine the possibilities for a hyperelliptic panoptigon $P$ of genus $g\geq 2$, sorted by type.

\begin{itemize}
    \item Let $P$ be a hyperelliptic polygon of Type 1.  If $g\leq 3$, then we may choose $p=(a,1)$ that can see every other point at height $1$, as well as all points at heights $0$ and $2$; in this case $P$ is a panoptigon.  If $g\geq 4$, then there are at least $4$ points at height $1$.  Moreover, the number of points at height $0$ is $i+1$ where $g\leq i\leq 2g$, and we have $i+1\geq 5$ since $g\geq 4$.  Thus it is impossible to have at most $3$ points at one height and $1$ at another.  This means that for $g\geq 4$, $P$ cannot be a panoptigon
    \item Let $P$ be a hyperelliptic polygon of Type 2.  If $g=2$, then $P$ has exactly three points at height $1$, and we can choose the middle point as a panoptigon point.  Now assume $g\geq 3$; we cannot choose a panoptigon point at height $1$, since there are $g+1\geq 4$ points at that height.  To avoid having $4$ points on both the top and bottom rows we need  $0\leq i\leq g$ and $0\leq j\leq i$; and one of $i$ and $j$ must be $0$, so we need $j=0$ since $j\leq i$.  From there we need at most $3$ lattice points on the bottom row, so $0\leq i\leq 2$. If $i=2$, then the only possible panopticon point is the middle one on the bottom row, namely $(1,0)$; but this point cannot see $(1,2)$, a contradiction. Thus $0\leq i\leq 1$; note that in either case $(0,0)$ can serve as a panoptigon point.
    \item  Finally, let $P$ be a hyperelliptic polygon of Type 3.  We cannot have a panoptigon point at height $1$, since there are at least $g+2\geq 4$ points at that height.  If there is a panoptigon point at height $0$,  then we must have at most $3$ points at height $0$ and exactly one point at height $2$; that is, we must have $j=0$ and $i\leq 2$.  Moreover, we need to verify that way may choose a panoptigon point at height $0$ that can see the unique point at height $2$; this can always be done if $i=1$, but if $j=0$ then we need  $k$ odd (the only possible panoptigon point is then $(0,0)$), and if $j=2$ we need $k$ even (the only possible panoptigon point is then $(1,0)$).  A similar argument shows that we can choose a panoptigon point at height $2$ if and only if $i=0$ and $j\leq 2$, with $k$ odd if $j=0$ and $k$ even if $j=2$.
\end{itemize}
\end{proof}
As with the lattice width $1$ panoptigons, we find infinitely many lattice width $2$ panoptigons, namely those of Type 2 with $j=0$ and $0\leq i\leq 1$, and those of Type 3.

We have now classified all hyperelliptic panoptigons, and have found that there are infinitely many of lattice width $1$ and infinitely many of lattice width $2$.  Our last step is to understand non-hyperelliptic panoptigons; with the exception of the triangle $T_3$, this is equivalent to panoptigons of lattice width $3$ or more.  We are now ready to prove that the total number of lattice points of such a panoptigon is at most $13$.

\begin{proof}[Proof of Theorem \ref{theorem:at_most_13}]  
Let us consider the lattice diameter $\ell(P)$ of $P$.  We know by \cite[Theorem 1]{alarcon} that $|P\cap\mathbb{Z}^2|\leq (\ell(P)+1)^2$, so if $\ell(P)\leq 2$ we have $|P\cap\mathbb{Z}^2|\leq 9$.  Thus we may assume $\ell(P)\geq 3$.

Perform an $\textrm{SL}_2(\mathbb{Z})$ transformation so that $\langle 1,0\rangle$ is a lattice diameter direction for $P$, and translate the polygon so that the origin $O=(0,0)$ is a panoptigon point.  Thus $P\cap\mathbb{Z}^2$ consists of $O$ and a collection of visible points.

Since $\ell(P)\geq3$ and $\langle 1,0\rangle$ is a lattice diameter direction, we know that the polygon $P$ must contain $4$ lattice points of the form $(a,b)$, $(a+1,b)$, $(a+2,b)$, and $(a+3,b)$.  We claim that $b\in\{-1,1\}$.  Certainly $b\neq 0$, since there are only three such points allowed in $P$:  $(0,0)$ and $(\pm 1, 0)$.  We also know that $b$ cannot be even:  any set $\mathbb{Z}\times \{2k\}$ has every second point invisible from the origin.

Suppose for the sake of contradiction that the points $(a,b)$, $(a+1,b)$, $(a+2,b)$, and $(a+3,b)$ are in $P$ with $b$ odd and $b\geq 3$ (a symmetric argument will hold for $b\leq -3$).  Consider the triangle $T=\textrm{conv}(O,(a,b),\ldots,(a+3,b))$.  By convexity, $T\subset P$.  
Consider the line segment $T\cap L$, where $L$ is the line defined by $y=b-1$.  The length of this line segment is $3-\frac{1}{b}$, and since $b\geq 3$ this is strictly greater than $2$.  Any line segment of length $2$ at height $b-1$ will intersect at least two lattice points.  But since $b-1$ is even and $b-1\geq 2$, at least one of these lattice points is not visible from $O$.  Such a lattice point must be contained in $T$, and therefore in $P$, a contradiction.  Thus we have that $b=\pm 1$.

Rotating our polygon $180^\circ$ degrees if necessary, we may assume that $b=-1$, so that the points $(a,-1),\ldots,(a+3,-1)$ are contained in $P$.  It is possible that the number $k$ of lattice points on the line defined by $y=-1$ is more than $4$; up to relabelling, we may assume that $(a,-1),\ldots,(a+k-1,-1)$ are lattice points in $P$ while $(a-1,-1)$ and $(a+k,-1)$ are not, where $k\geq 4$.  Applying a shearing transformation $\left(\begin{matrix}1&a+1\\0&1\end{matrix}\right)$, we may further assume that the points at height $-1$ are precisely $(-1,-1),\ldots,(k-2,-1)$.

We will now make a series of arguments that rule out many lattice points from being contained in $P$.  The end result of these constraints is pictured in Figure \ref{figure:ruling_out_points}, with points labelled by the argument that rules them out.

\begin{itemize}
\item[(i)] The polygon $P$ has (regular) width at least $3$ at height $-1$, and width strictly smaller than $2$ at heights $2$ and $-2$, since it cannot contain two consecutive lattice points at those heights.  It follows from convexity that the width of the polygon is strictly smaller than $1$ at height $-3$, and that the polygon cannot have any lattice points at all at height $-4$.  It also follows that the polygon cannot have a nonnegative width at height $8$.  Thus every lattice point $(x,y)$ in the polygon satisfies $-3\leq y\leq 7$.

\item[(ii)]  We can further restrict the possible heights by showing that there can be no lattice points at height $-3$.  Suppose there were such a point $(x,-3)$ in $P$.  Consider the triangle $\textrm{conv}((x,-3),(-1,-1),(2,-1))$.  This triangle has area $3$, so by Pick's Theorem \cite{pick} satisfies $3=g+\frac{b}{2}-1$, or $4=g+\frac{b}{2}$, where $g$ and $b$ are the number of interior lattice points an boundary lattice points of the triangle, respectively.  The $4$ lattice points at height $-1$ contribute $2$ to this sum, and the one lattice point at height $-3$ contributes $\frac{1}{2}$ to this sum, meaning that the lattice points at height $-2$ contribute $\frac{3}{2}$ to this sum.  It follows that there must be at least two lattice points at height $-2$; but this is a contradiction, since at least one of these points will be invisible from $O$.  We conclude that $P$ cannot contain a lattice point of the form $(x,-3)$, and thus $y\geq -2$ for all lattice points $(x,y)\in P$.

\item[(iii)]  We know that the lattice point $(-2,0)$ is not in $P$ since it is not visible from $O$.  If there is any lattice point of the form $(x,y)$ with $y\geq 1$ and $y\leq -x-2$, then the triangle $\textrm{conv}(O,(-1,-1),(x,y))$ will contain $(-2,0)$.  Thus no such lattice point $(x,y)$ can exist in $P$.

\item[(iv)] No point of the form $(x,y)$ with $x\geq 2$ and $y\geq 0$ may appear in $P$:  this would force the point $(2,0)$ to appear, as it would lie in the triangle $\textrm{conv}(O,(2,-1),(x,y))$.

\item[(v)]  There are now only finitely many allowed lattice points $(x,y)$ with $y\geq 1$, namely those with $-y-1\leq x\leq 2$ and $1\leq y\leq 7$. For each such point, we  consider the triangle $\textrm{conv}((x,y),(-1,-1),(-1,3))$.   We claim that only the $13$ choices of $(x,y)$ pictured in Figure \ref{figure:ruling_out_points}.
 that do not introduce a forbidden point.  To see this, we note that the points $(0,2)$, $(-2,2)$ and $(-2,4)$ are all forbidden.  The point $(0,2)$ rules out $(x,y)$ with $x=1$ and $y\geq 5$; with $x=0$ and $y\geq 2$; with $x=-1$ and $y\geq 4$; ant with $x=-2$ and $y\geq 5$.  For $x=-2$, the points $(-2,2)$ and $(-2,4)$ are already ruled out.  For all remaining points with $x\leq -3$,  every point besides $(-3,2)$, $(-4,3)$, and $(-5,3)$ introduces the point $(-2,2)$ or $(-2,4)$ or both.  This establishes our claim.

\item[(vi)]  By assumption, we know there are no lattice points of the form $(x,-1)$ where $x\leq -2$.  It follows that there are also no lattice points of the form $(x,-2)$ where $x\leq -4$, since $(-1,-2)$ would lie in the convex hull of such a point with $O$ and $(2,-1)$.

\item[(vii)]  We will now use the fact that we have assumed that $P$ satisfies $\lw(P)\geq 3$.  We cannot have that $P$ is contained in the strip $-2\leq y\leq 0$, so there must be at least one point $(x,y)$ with $y\geq 1$.  If there is a point of the form $(x',-1)$ with $x'\geq 6$, then we would have that  $\textrm{conv}((x,y),(x',-1),(-1,-1))$ contains the point $(2,0)$, which is invisible.  Thus we can only have points $(x',-1)$ if $-1\leq x\leq 5$.  A similar argument shows that $P$ can only contain a point $(x,-2)$ if $x$ is odd with $-3\leq x\leq 9$.
\end{itemize}

\begin{figure}[hbt]
\centering
\includegraphics{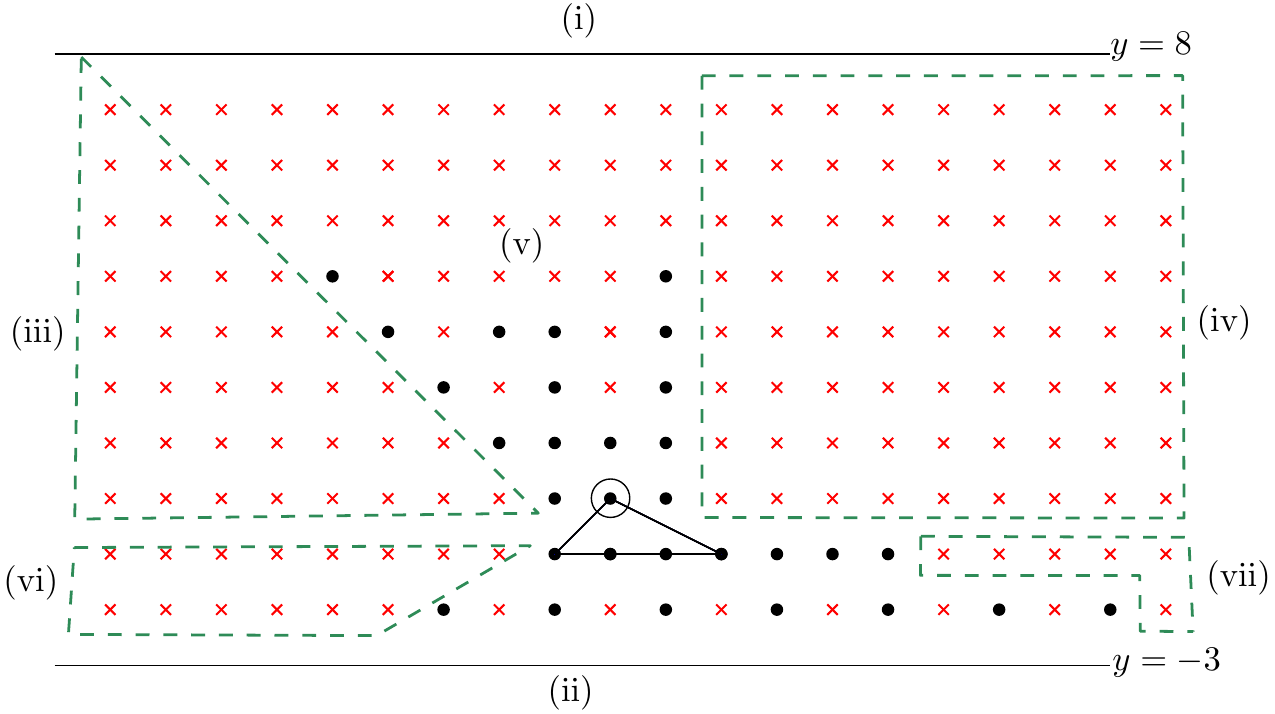}
\caption{Possible lattice points in $P$, with impossible points labelled by the argument ruling them out}
\label{figure:ruling_out_points}

\end{figure}

We have now narrowed the possible lattice points in our polygon down to the $30$ lattice points in Figure \ref{figure:ruling_out_points}, five of which we know appear in $P$.  For every such point $(x,y)$, there does indeed exist a polygon $P$ with $\lw(P)\geq 3$ containing $(x,y)$ as well as the five prescribed points such that $P\cap\mathbb{Z}^2$ is a subset of the $30$ allowed points, so we cannot narrow down any further.

One way to finish the proof is by use of a computer to determine all possible subsets of the $25$ points that can be added to our initial $5$ points to yield a polygon of lattice width at least $3$; we would then simply check the largest number of lattice points.  We have carried out this computation, and present the results in Appendix \ref{section:appendix}.  We also present the following argument, which will complete our proof without needing to rely on a computer.

First we  split into four cases, depending on the number $k$ of lattice points at height $-1$:  $4$, $5$, $6$, or $7$.  When there are more than $4$, we can eliminate more of the candidate points $(x,y)$ with $y\geq 1$ or $y=-2$; the sets of allowable points in these four cases are illustrated in Figure \ref{figure:four_cases_allowed_points}.  In each case we will argue that our polygon $P$ has at most $13$ lattice points.

\begin{figure}[hbt]
\centering
\includegraphics[scale=0.8]{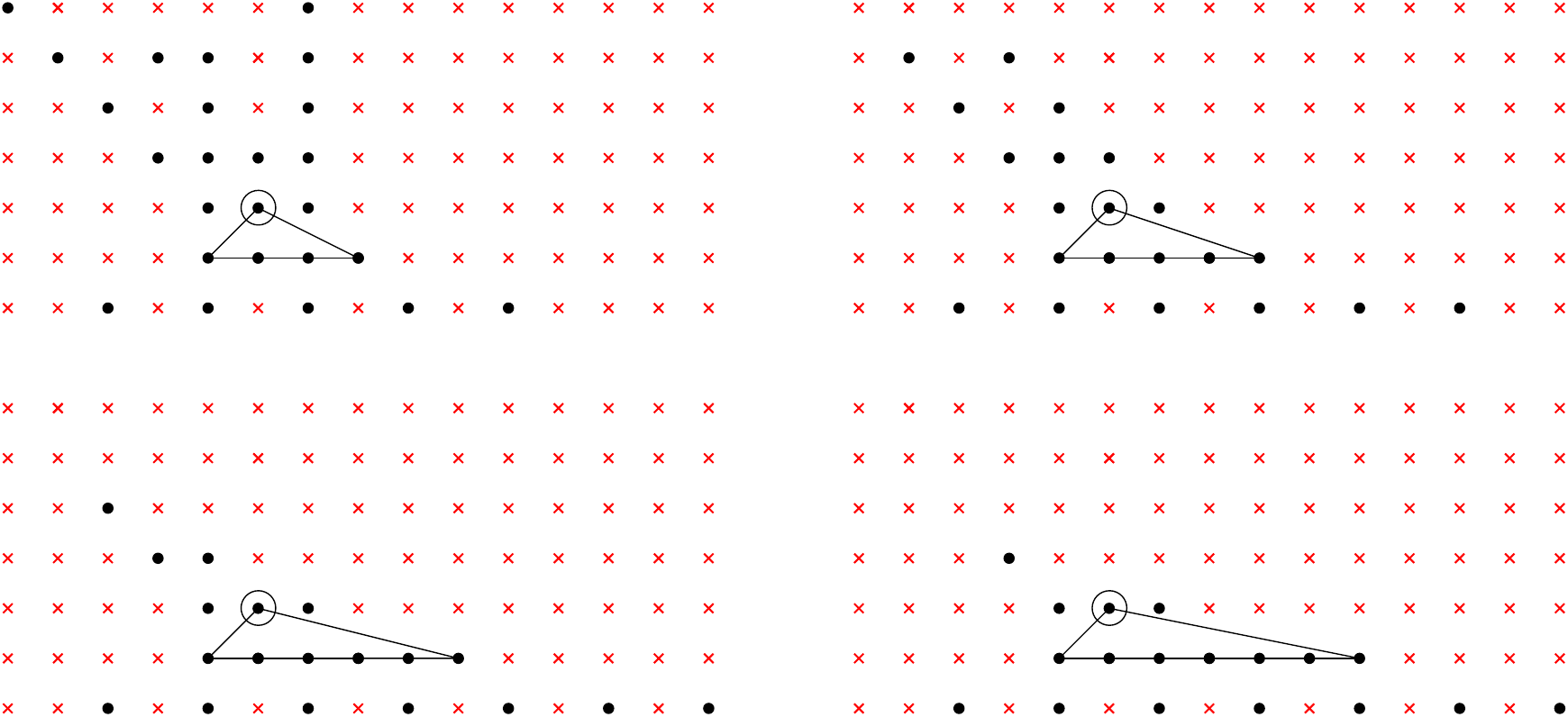}
\caption{Narrowing down possible points depending on the number of points at height $-1$}
\label{figure:four_cases_allowed_points}

\end{figure}

\begin{itemize}
    \item Suppose $k=4$.  There are $20$ possible points at height $-1$ or above; since there is at most one point at height $-2$, it suffices to show that we can fit no more than $12$ lattice points at height $-1$ or above into a lattice polygon.
    
    First suppose the point $(-5,4)$ is in $P$.  This eliminates $9$ possible points from appearing in $P$, yielding at most $20-9+1=12$ lattice points total in $P$. Leaving out $(-5,4)$ but including $(-4,3)$ similarly eliminates $9$ possible points.  Including $(-2,3)$ eliminates $8$; including $(-1,3)$ and leaving out $(-2,3)$ eliminates $8$; including $(1,4)$ eliminates $9$; and including $(1,3)$ and leaving out $(1,4)$ eliminates $9$.  In all these cases, we can conclude that $P$ has at most $13$ lattice points in total.
    
    The only remaining case is that all lattice points of $P$ have heights between $-2$ and $2$.  The polygon can have at most one lattice point at height $-2$, at most one lattice point at height $2$, and some assortment of the $11$ total points with heights between $-1$ and $1$.  Once again, $P$ can have at most $13$ lattice points.

    \item  Suppose $k=5$.  If $P$ includes the point $(-4,3)$, then it cannot include $(-2,3)$, $(-1,2)$, or $(0,1)$.  Combined with the fact that $P$ can only have one lattice point at height $-2$, this leaves $P$ with at most $13$ total lattice points.  A similar argument holds if $P$ includes the point $(-2,3)$.  If $P$ contains neither $(-4,3)$ nor $(-2,3)$, then it has at most $1$ point at height $3$, at most one point at height $-2$, and some collection of the $11$ points between.  Thus $P$ has at most $13$ lattice points.
    \item  Suppose $k=6$.   Since $P$ has at most one lattice point at height $-2$, and only $12$ points are allowed outside of that height, $P$ has at most $13$ lattice points total.
    \item Suppose $k=7$.  Since $P$ has at most one lattice point at height $-2$, and only $11$ points are allowed outside of that height, $P$ has at most $12$ lattice points total.
\end{itemize}
We conclude that $|P\cap\mathbb{Z}^2|\leq 13$.
\end{proof}

As detailed in Appendix \ref{section:appendix}, we enumerated all non-hyperelliptic polygons containing the five prescribed points from the previous proof, along with some subset of the other $25$ permissible points. The end result was $69$ non-hyperelliptic panoptigons of lattice diameter $3$ or more, up to equivalence.  In the same appendix we show that there are $3$ non-hyperelliptic panoptigons with lattice diameter at most $2$, yielding a grand total of $72$ non-hyperelliptic panoptigons.  If we instead wish to count panoptigons of lattice width at least $3$, this count becomes $73$ due to the inclusion of $T_3$.

We remark that it is possible to give a much shorter proof that there are only finitely many non-hyperelliptic panoptigons. Suppose that $P$ is a panoptigon of lattice diameter $\ell(P)\geq 7$.  By the same argument that started our previous proof, we may assume without loss of generality that $P$ has $(0,0)$ as a panoptigon point as well as eight or more lattice points at height $-1$.  If $P$ contains a point of the form $(x,y)$ where $y\geq 2$, then the line segment $P\cap L$ where $L$ is the $x$-axis must have length at least $7\left(1-\frac{1}{y+1}\right)\geq 7\left(1-\frac{1}{2+1}\right)=\frac{14}{3}>4$.  As such $P$ must contain at least $4$ points at height $0$, impossible since there are only $3$ visible points at this height.  Similarly $P$ can have no lattice points at height $1$:  these would force the inclusion of either $(2,0)$ or $(-2,0)$.  Finally, if $P$ contains a  point of the form $(x,y)$ where $y\leq -3$, then the line segment $P\cap L'$ where $L'$ is the horizontal line at height $-2$ must have width at least $7\left(1-\frac{1}{|y|-1}\right)\geq 7\left(1-\frac{1}{3-1}\right)=\frac{7}{2}>3$.  As such we know that $P$ must contain at least $3$ lattice points at height $-2$, impossible since no two consecutive points at that height are both visible.  Thus we know that $P$ only has lattice points at heights $0$, $-1$, and $-2$, and so is a hyperelliptic polygon.  This means that if $P$ is a non-hyperelliptic panoptigon, it must have $\ell(P)\leq 6$.
 Since $|P\cap\mathbb{Z}^2|\leq (\ell(P)+1)^2$, it follows that if $P$ is a non-hyperelliptic panoptigon then it must have at most $(6+1)^2=49$ lattice points; there are any finitely many such polygons.  In principle one could enumerate all such polygons with at most $49$ lattice points as in \cite{Castryck2012} and check which are panoptigons; this would be much less efficient than the computation led to by our longer proof.

\section{Characterizing all maximal polygons of lattice width $3$ or $4$}\label{section:lw3_and_4}

In this section we will characterize all maximal polygons of lattice width $3$ or $4$.  By Lemma \ref{lemma:lw_facts}, this will allow us to determine which polygons of lattice width $1$ or $2$ can be the interior polygon of some lattice polygon. This will be helpful in Section \ref{section:big_face_graphs}, when we will need to know which of the infinitely many panoptigons of lattice width at most $2$ can be an interior polygon.

For lattice width $3$, we do have the triangle $T_3$ as an exceptional case; all other polygons with lattice width $3$ must have an interior polygon of lattice width $1$.

\begin{prop}\label{prop:lattice_width_3_maximal}
Let $P$ be a maximal polygon.
Then $P$ has lattice width $3$ if and only if up to equivalence we either have $P=T_3$, or  $P=T_{a,b}^{(-1)}$ where $a\geq \frac{1}{2}b-1$, $0\leq a\leq b$, and $b\geq 1$, and where $T_{a,b}\neq T_1$.
\end{prop}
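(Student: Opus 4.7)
The plan is to leverage Lemma~\ref{lemma:lw_facts}, Theorem~\ref{theorem:lw_012}, and Proposition~\ref{prop:interior_maximal} to reduce the classification to an explicit calculation of the relaxed polygon $T_{a,b}^{(-1)}$. For the forward direction, start with a maximal $P$ satisfying $\lw(P)=3$. Lemma~\ref{lemma:lw_facts} gives two cases: either $P$ is equivalent to some $T_d$, in which case $d=\lw(P)=3$ and $P=T_3$; or $\lw(\pint)=1$, in which case $\pint$ is two-dimensional, so $P$ is non-hyperelliptic, and Theorem~\ref{theorem:lw_012} lets us write $\pint\sim T_{a,b}$ with $0\le a\le b$ and $b\ge 1$. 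Proposition~\ref{prop:interior_maximal} then gives $P=T_{a,b}^{(-1)}$ up to equivalence. The case $T_{a,b}=T_1$ must be excluded because $T_1^{(-1)}$ is equivalent to $T_4$ and so has lattice width $4$, not $3$; this is precisely the exceptional case where $P\sim T_d$ has already been captured.

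The central computation is to determine for which $(a,b)$ the polygon $T_{a,b}^{(-1)}$ is actually a lattice polygon. I would write out the four defining half-planes of $T_{a,b}$ and relax each constant by $1$, obtaining $x\ge -1$, $y\le 2$, $x+(b-a)y\le b+1$, and $y\ge -1$. Pairwise intersection of adjacent relaxed edges produces four candidate vertices $(-1,-1)$, $(-1,2)$, $(2a-b+1,2)$, and $(2b-a+1,-1)$, all of which have integer coordinates automatically. The subtlety is whether the top-right candidate actually appears in the convex intersection; it does precisely when $2a-b+1\ge -1$, i.e.\ $a\ge \tfrac{b}{2}-1$. When $a<\tfrac{b}{2}-1$, the top edge collapses and the left and right edges meet instead at $\bigl(-1,\tfrac{b+2}{b-a}\bigr)$. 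A short divisibility argument using $(b-a)\mid(b+2)\iff (b-a)\mid(a+2)$, combined with the inequality $b-a>a+2$ forced by $a<\tfrac{b}{2}-1$, shows this intersection cannot be a lattice point, so $T_{a,b}^{(-1)}$ fails to be a lattice polygon. Thus $T_{a,b}^{(-1)}$ is a lattice polygon if and only if $a\ge \tfrac{b}{2}-1$, matching the stated condition.

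For the reverse direction I would verify both cases. The triangle $T_3$ has lattice width $3$ by inspection, and any $Q\supsetneq T_3$ with $g(Q)=1$ would satisfy $\lw(Q)\ge 3$, which by Lemma~\ref{lemma:lw_facts} forces $Q\sim T_3$; a lattice-point count then rules out proper containment between equivalent lattice polygons, so $T_3$ is maximal. For $T_{a,b}^{(-1)}$ under the stated conditions, reading off the strict inequalities shows its interior lattice points are exactly those of $T_{a,b}$, so $\pint(T_{a,b}^{(-1)})=T_{a,b}$; maximality then follows from Proposition~\ref{prop:interior_maximal}, and Lemma~\ref{lemma:lw_facts} gives $\lw(T_{a,b}^{(-1)})=\lw(T_{a,b})+2=3$. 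The main obstacle I expect is the borderline case $a=\tfrac{b}{2}-1$, where $T_{a,b}^{(-1)}$ degenerates to a triangle: one must rule out that this triangle is equivalent to some $T_d$ (which would invoke the exceptional branch of Lemma~\ref{lemma:lw_facts} and break the formula $\lw(P)=\lw(\pint)+2$), and this can be done by comparing the three lattice edge lengths of the degenerate triangle.
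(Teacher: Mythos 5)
Your proof follows essentially the same route as the paper's: reduce via Lemma \ref{lemma:lw_facts}, Theorem \ref{theorem:lw_012}, and Proposition \ref{prop:interior_maximal} to deciding when $T_{a,b}^{(-1)}$ is a lattice polygon, write out the four relaxed edges and their candidate lattice vertices, and isolate the condition $2a-b+1\geq -1$; your divisibility argument in the degenerate case is an equivalent substitute for the paper's direct bound $1<\frac{b+2}{b-a}<2$, and your extra care about the borderline triangle and the reverse direction only makes explicit what the paper leaves implicit. One small correction: $T_1^{(-1)}$ is a translate of $T_3$, not of $T_4$ --- the real reason $T_{a,b}=T_1$ is excluded is that $T_1^{(-1)}\sim T_3$ has a single interior lattice point rather than interior polygon $T_1$, so $T_1$ never occurs as $\pint$ and the corresponding maximal polygon is already covered by the listed case $P=T_3$.
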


\begin{proof}
If $P$ is equivalent to $T_3$, then it has lattice width $3$ as desired.  If $P$ is equivalent to some other $T_d$, then $P$ has lattice width $d\neq 3$, and so need not be considered.

Now assume $P$ is not equivalent to $T_d$ for any $d$, so that $P$ has lattice width $3$ if and only if $\pint$ has lattice width $1$ by Lemma \ref{lemma:lw_facts}.  This is the case if and only if $\pint$ is equivalent to $T_{a,b}$ for some $a,b\in\mathbb{Z}$ where $0\leq a\leq b$ and $b\geq 1$ (where $T_{a,b}\neq T_1$) by Theorem \ref{theorem:lw_012}.  Thus to prove our claim, it suffices by Proposition \ref{prop:interior_maximal} to show that $T_{a,b}^{(-1)}$ is a lattice polygon if and only if $a\geq \frac{1}{2}b-1$.

We set the following notation to describe $T_{a,b}$.  Starting with the face connecting $(0,0)$ and $(0,1)$ and moving counterclockwise, label the faces of $T_{a,b}$ as $\tau_1$, $\tau_2$, $\tau_3$, and $\tau_4$ (where $\tau_4$ does not appear if $a=0$).

Pushing out the faces, we find that $\tau_1^{(-1)}$ lies on the line $x=-1$, $\tau_2^{(-1)}$ on the line $y=-1$, $\tau_3^{(-1)}$ on the line $x+(b-a)y= b+1$, and $\tau_4^{(-1)}$ on the line $y=2$.  Note that working cyclically, we have $\tau_i^{(-1)}\cap\tau_{i+1}^{(-1)}$ is a lattice point:  we get the points $(-1,-1)$, $(2b-a+1,1)$, $(2a-b+1,2)$, and $(-1,2)$.  Thus if these are the vertices of $T_{a,b}^{(-1)}$, then $T_{a,b}^{(-1)}$ is a lattice polygon.
Certainly $(-1,-1)$ and $(2b-a+1,1)$ appear in $T_{a,b}^{(-1)}$.  The points $(2a-b+1,2)$ and $(-1,2)$ will appear as (not necessarily distinct) vertices of $T_{a,b}^{(-1)}$ if and only if  $2a-b+1\geq -1$; that is, if and only if $a\geq\frac{1}{2}b-1$.  Thus in the case that $a\geq\frac{1}{2}b-1$, we have that $T_{a,b}^{(-1)}$ is a lattice polygon with vertices at $(-1,-1)$, $(2b-a+1,-1)$, $(2a-b+1,2)$, and $(-1,2)$.

If on the other hand $a<\frac{1}{2}b-1$, then $\tau_4^{(-1)}$ is not a face of $T_{a,b}^{(-1)}$, and so one of the vertices of $T_{a,b}^{(-1)}$ is $\tau_1^{(-1)}\cap\tau_3^{(-1)}$.  These faces intersect at the point $\left(\frac{b+2}{b-a},-1\right)$, where we may divide by $b-a$ since $a<\frac{1}{2}b-1$ and so $a\neq b$. Note that $b-a> b-\frac{1}{2}b+1=\frac{1}{2}(b+2)$.  It follows that that $\frac{b+2}{b-a}<2$, and certainly  $\frac{b+2}{b-a}>1$, so $\left(\frac{b+2}{b-a},-1\right)$ is not a lattice point.  We conclude that $T_{a,b}^{(-1)}$ is a lattice polygon if and only if $a\geq \frac{1}{2}b-1$, thus completing our proof.
\end{proof}
The explicitness of this result, combined with the fact that $g\left(T_{a,b}^{(-1)}\right)=a+b+2$, allows us to count the number of maximal polygons $P$ of genus $g$ with lattice width $3$.  First, note that there are $\left\lfloor\frac{g-2}{2}\right\rfloor$ choices of $T_{a,b}$ with $g$ lattice points:  with our assumption that $a\leq b$, we can choose $a$ to be any number from $1$ up to $\left\lfloor\frac{g-2}{2}\right\rfloor$, and $b$ is determined from there.  Next, we will exclude those choices of $a$ that yield $a<\frac{1}{2}b-1$, or equivalently $a\leq \frac{1}{2}b-\frac{3}{2}$ since $a,b\in\mathbb{Z}$.  Given that $a+b=g$, this is equivalent to $a\leq\frac{1}{2}(g-a)-\frac{3}{2}$, or $\frac{3}{2}a\leq\frac{1}{2}g-\frac{3}{2}$, or $a\leq \frac{g}{3}-1$.  Thus the number of polygons we must exclude from the total count $\left\lfloor\frac{g-2}{2}\right\rfloor$ is $\left\lfloor \frac{g}{3}\right\rfloor-1$. 
We conclude that the number of maximal polygons of genus $g$ with lattice width $3$ is
\[\left\lfloor\frac{g-2}{2}\right\rfloor-\left\lfloor\frac{g}{3}\right\rfloor+1\]
when $g\geq 4$ (which allows us to ignore $T_3$).

We now wish to classify maximal polygons $P$ of lattice width $4$.  One possibility is that $P$ is $T_4$.  Other than this example, the interior polygon $P_{\textrm{int}}$ must have lattice width $2$.  Note that if $g(P_{\textrm{int}})=0$, then $P_{\textrm{int}}=T_2$; this has relaxed polygon $T_5$, which has lattice width $5$ and so is not under consideration.  If  $g(P_{\textrm{int}})=1$, then $\pint$ is one of the polygons in Figure \ref{figure:g1_lw2}.  It turns out that all of these can be relaxed to a lattice polygon, each of which has lattice width $4$; these polygons are illustrated in Figure \ref{figure:g1_relaxed}.

\begin{figure}[hbt]
\centering
\includegraphics[scale=0.8]{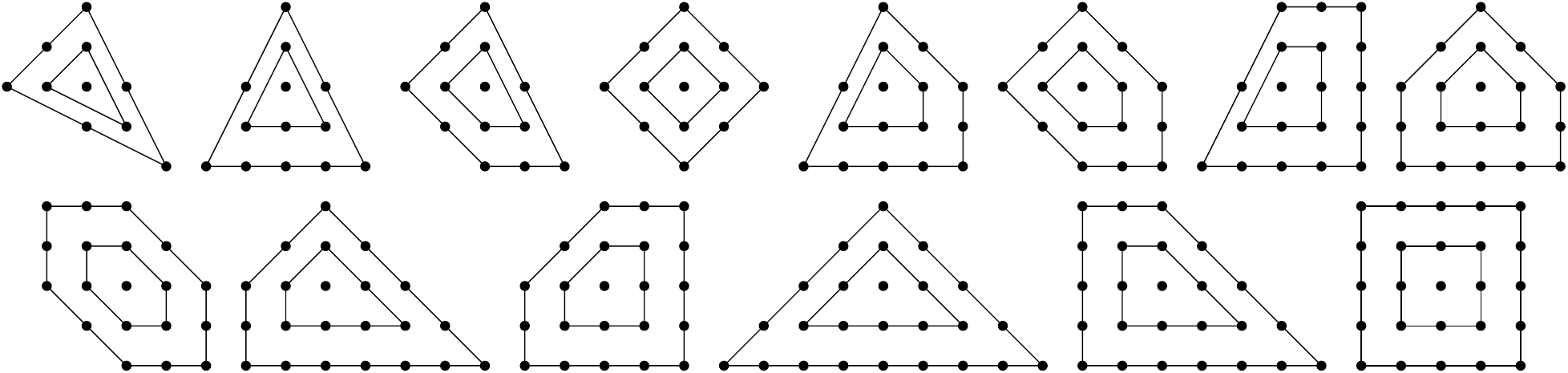}
\caption{The lattice width $4$ polygons with exactly one doubly interior point}
\label{figure:g1_relaxed}
\end{figure}

Now we deal with the most general case of polygons with $\textrm{lw}(P)=4$, namely those where $P_{\textrm{int}}$ has lattice width $2$ and genus $g'\geq 2$.  Thus $\pint$ must be one of the $\frac{1}{6}(g+3)(2g^2+15g+16)$ hyperelliptic polygons presented in Theorem \ref{theorem:lw_012}.  We must now determine which of these hyperelliptic polygons $Q$ have a relaxed polygon $Q^{(-1)}$ that has lattice points for vertices.  We do this over three lemmas, which consider the polygons of Type 1, Type 2, and Type 3 separately. 

\begin{lemma}\label{lemma:type1}
If $Q$ is of Type 1, then the relaxed polygon $Q^{(-1)}$ is a lattice polygon if and only if $i\leq \frac{3g+1}{2}$. 
\end{lemma}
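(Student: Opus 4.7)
The plan is to compute the relaxed polygon $Q^{(-1)}$ explicitly for a Type 1 hyperelliptic polygon $Q$ and determine precisely when all of its vertices are lattice points. First, I would fix coordinates for $Q$ from the parametrization in Theorem \ref{theorem:lw_012}: a horizontal bottom edge at $y=0$ of lattice length $i$, a horizontal top edge at $y=2$ of lattice length determined by $g$, and two slanted edges connecting them, with $g\leq i\leq 2g$. For each edge $\tau$, I would write down its defining equation $a_\tau x+b_\tau y=c_\tau$ with $\gcd(a_\tau,b_\tau)=1$.

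Next, I would relax each edge by one unit in the outward normal direction: the horizontal edges become $y=-1$ and $y=3$, while each slanted edge shifts by one unit along its normal. Intersecting consecutive relaxed half-planes cyclically produces the candidate vertices of $Q^{(-1)}$. The two vertices arising from intersections with $y=-1$ are immediately lattice points (the slopes and offsets on the bottom give integer coordinates), so the whole question reduces to the two candidate vertices at the top, arising from intersections of the slanted relaxed edges with $y=3$ (or with each other, if the top relaxed edge has collapsed). A direct computation gives these coordinates as expressions linear in $i$ and $g$ over a denominator determined by the slopes of the slanted edges.

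The integrality of those top vertices, equivalently the non-collapse of the top edge of $Q^{(-1)}$ containing a lattice point (invoking \cite[Lemma 2.2]{small2017_dim}), translates to the inequality $i\leq\frac{3g+1}{2}$. The forward direction — showing that for $i\leq\frac{3g+1}{2}$ every vertex of $Q^{(-1)}$ has integer coordinates — amounts to checking the arithmetic once the formulas are in hand. The reverse direction requires showing that for $i>\frac{3g+1}{2}$ one of two failure modes occurs: either the two slanted relaxed edges meet at a non-lattice point on or above $y=3$, or they meet strictly below $y=3$ so that no lattice point lies on $Q^{(-1)}\cap\mathcal{H}_{\tau}^{(-1)}$ for $\tau$ the top edge. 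The main obstacle is disentangling these two failure modes and checking they cover the full range $\frac{3g+1}{2}<i\leq 2g$; handling the parity of $g$ carefully (so that $\lceil\frac{3g+1}{2}\rceil$ is treated correctly) is where the argument must be most delicate.
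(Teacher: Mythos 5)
Your approach is essentially the paper's: write down the four relaxed edge lines, observe that all four cyclic intersection points are lattice points, and identify the collapse of the top edge (the two slanted relaxed edges meeting below $y=3$) as the only obstruction, which occurs exactly when $i>\frac{3g+1}{2}$. The difficulties you anticipate at the end do not materialize: the slanted relaxed edges meet at height $y=\frac{i+1}{i-g}$, which for $\frac{3g+1}{2}<i\leq 2g$ lies strictly between $2$ and $3$ and is therefore automatically non-integral, so there is only one failure mode (your first alleged failure mode is harmless, since if the slanted edges meet on or above $y=3$ the actual vertices are the lattice points on $y=3$) and no parity issue with $\lceil\frac{3g+1}{2}\rceil$ arises.
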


\begin{proof}
Let $\tau_1$, $\tau_2$, $\tau_3$, and $\tau_4$ denote the four one-dimensional faces of $Q$, proceeding counterclockwise starting from the face connecting $(0,0)$ and $(1,2)$ (note that $\tau_4$ does not appear as a one-dimensional face if $i=2g$). Consider the relaxed faces $\tau_1^{(-1)}$, $\tau_2^{(-1)}$, $\tau_3^{(-1)}$, and $\tau_4^{(-1)}$. These lie on the lines $-2x+y=1$, $y=-1$, $2x+(2i-2g-1)y=2i+1$, and $y=3$.  Proceeding cyclically, the intersection points $\tau_i^{(-1)}\cap \tau_{i+1}^{(-1)}$ of these relaxed faces are $(-1,-1)$, $(2i-g,-1)$, $(3g-2i+2,3)$, and $(1,3)$. All these points are lattice points, so if they are indeed the vertices of $\pint$ then $Q^{(-1)}$ is a lattice polygon.

The one situation in which our relaxed polygon will not have all lattice points is if $\tau_1^{(-1)}$ and $\tau_3^{(-1)}$ intersect at a height strictly below $3$, cutting off the face $\tau_4^{(-1)}$ and yielding a vertex with $y$ coordinate strictly between $2$ and $3$.  These faces intersect at  $\left(\frac{g+1}{2(i-g)},\frac{i+1}{i-g}\right)$, which has $y$-coordinate strictly smaller than $3$ if and only if $\frac{i+1}{i-g}<3$, which can be rewritten as $i+1<3i-3g$, or as $\frac{3g+1}{2}<i$.  Thus when $i\leq \frac{3g+1}{2}$, our relaxed polygon is a lattice polygon; and when $i>\frac{3g+1}{2}$, it is not.
\end{proof}

\begin{lemma}\label{lemma:type2}
If $Q$ is of Type 2, then the relaxed polygon $Q^{(-1)}$ is a lattice polygon if and only if $i\geq \frac{g}{2}+1$ and $j\geq\frac{g-1}{2}$.
\end{lemma}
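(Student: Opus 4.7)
The plan is to mirror the argument used for Type 1 polygons. I would begin by enumerating the one-dimensional faces $\tau_1, \tau_2, \tau_3, \tau_4$ (with the usual degenerate cases when $j=0$ or when $i$ attains its upper bound, in which case one of the $\tau_k$ drops out) of a Type 2 polygon $Q$, proceeding counterclockwise from a canonical edge. For each $\tau_k$ I would compute the normalized defining equation $a_k x + b_k y = c_k$ with $\gcd(a_k, b_k) = 1$, and then read off the relaxed line $a_k x + b_k y = c_k + 1$ containing $\tau_k^{(-1)}$. These expressions will be linear in the two parameters $g$, $i$ (and $j$).

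Next I would compute, cyclically, the intersection points $\tau_k^{(-1)} \cap \tau_{k+1}^{(-1)}$ of consecutive relaxed faces, exactly as in the proof of Lemma \ref{lemma:type1}. I expect a direct calculation to show that each of these intersection points is already a lattice point; this rules out any non-integrality arising from \emph{adjacent} relaxed faces. Consequently, the only way $Q^{(-1)}$ can fail to be a lattice polygon is that one of the faces $\tau_k^{(-1)}$ gets cut off — i.e., its two non-adjacent neighbors $\tau_{k-1}^{(-1)}$ and $\tau_{k+1}^{(-1)}$ meet on the wrong side of $\tau_k^{(-1)}$, producing a new vertex that need not be integral.

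The heart of the argument, and the main obstacle, is to carry out this collapse analysis. The presence of two separate inequalities in the Lemma statement strongly suggests that there are two independent collapse scenarios, one governed by $i$ and one by $j$, corresponding roughly to the two ``slanted'' sides of a Type 2 polygon. For each scenario I would compute the intersection of the relevant pair of non-adjacent relaxed lines, determine the inequality on the relevant parameter under which this intersection lies on the ``interior'' side of the intermediate relaxed line (the collapse regime), and verify that the resulting vertex has non-integer coordinates in precisely that regime. The arithmetic should produce the thresholds $i \geq \tfrac{g}{2}+1$ and $j \geq \tfrac{g-1}{2}$; integrality of $g$, $i$, $j$ explains the shift between strict inequalities in the geometric analysis and the non-strict inequalities in the final statement. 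As a last step I would check that the degenerate shapes (boundary values of the parameters, or cases where an edge collapses before relaxation) are either covered by one of the two regimes or trivially give lattice polygons, so that the paired inequalities indeed form a complete characterization.
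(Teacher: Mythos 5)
Your proposal follows essentially the same route as the paper: relax each face, check that consecutive relaxed faces meet in lattice points, and reduce the question to two independent collapse scenarios (one controlled by $i$, one by $j$) whose new intersection points land at non-integral heights precisely below the stated thresholds. The only adjustments you would make in execution are that a Type 2 polygon is generically a pentagon (faces $\tau_1,\ldots,\tau_5$, not four), and that the faces which get cut off are the two horizontal edges $\tau_2^{(-1)}$ and $\tau_5^{(-1)}$ (squeezed out by the slanted relaxed faces meeting between heights $0$ and $-1$, respectively $2$ and $3$); with that bookkeeping the computation gives exactly $i\geq\frac{g}{2}+1$ and $j\geq\frac{g-1}{2}$.
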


\begin{proof}
Label the faces of $Q$ cyclically as $\tau_1$, $\tau_2$, $\tau_3$, $\tau_4$, and $\tau_5$.  Due to the form of the slopes of these faces, the relaxed face $\tau_i^{(-1)}$ will intersect the relaxed face $\tau_{i+1}^{(-1)}$ at a lattice point; this is true for $\tau_1$ with $\tau_2$ and $\tau_5$ by computation, and for any horizontal line with a face of slope $1/k$ for some integer $k$.  Similarly, we are fine with the intersections of $\tau_3^{(-1)}$ and $\tau_4^{(-1)}$: these will always intersect at the lattice point $(g+2,1)$.  Thus the only way the relaxed polygon will fail to have lattice vertices is if certain edges are lost while pushing out.  Considering the normal fan of $Q$, this leads to two possible cases for $Q$ to not be integral:  if the face $\tau_2^{(-1)}$ is lost, and if the face $\tau_5^{(-1)}$ is lost.

First we consider the case that $\tau_2^{(-1)}$ is lost due to $\tau_1^{(-1)}$ and $\tau_3^{(-1)}$ intersecting at a point with $y$-coordinate strictly between $0$ and $-1$; note that this can only happen when $i<g$.  The face $\tau_1^{(-1)}$ is on the line $-2x+y=1$, and $\tau_3^{(-1)}$ is on the line $x-(g+1-i)y=i+1$.  These intersect at $\left(-\frac{g+2}{2g-2i+1},-\frac{2i+3}{2g-2i+1}\right)$. Note that $-\frac{2i+3}{2g-2i+1}>-1$ is equivalent to $\frac{2i+3}{2g-2i+1}<1$, which in turn is equivalent to $2i+3<2g-2i+1$.  This simplifies to $i<\frac{g}{2}+1$.  Thus we have a collapse of $\tau_2^{(-1)}$ that introduces a non-lattice vertex point if and only if $i<\frac{g}{2}+1$.

Now we consider the case that $\tau_5^{(-1)}$ is lost due to $\tau_1^{(-1)}$ and $\tau_4^{(-1)}$ intersecting at a point with $y$-coordinate strictly between $2$ and $3$.  The face $\tau_4^{(-1)}$ lies on the line with equation $x+(g-j)y=2g-j+2$.  This intersects $\tau_1^{(-1)}$ at $\left(\frac{g+2}{2g-2j+1},\frac{4g-2j+5}{2g-2j+1}\right)$.  Having $\frac{4g-2j+5}{2g-2j+1}<3$ is equivalent to $4g-2j+5< 6g-6j+3$, which can be rewritten as $4j<2g-2$, or $j< \frac{g-1}{2}$.  Thus we have a collapse of $\tau_5^{(-1)}$ that introduces a non-lattice vertex point if and only if $j< \frac{g-1}{2}$.

We conclude that $Q^{(-1)}$ is a lattice polygon if and only if $i\geq \frac{g}{2}+1$ and $j\geq\frac{g-1}{2}$
\end{proof}

\begin{lemma}\label{lemma:type3}
If $Q$ is of Type 3, then the relaxed polygon $Q^{(-1)}$ is a lattice polygon if and only if $i\geq g/2$ and $j\geq g/2$.
\end{lemma}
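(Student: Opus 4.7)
The plan is to mirror the strategy used for Types 1 and 2 in Lemmas \ref{lemma:type1} and \ref{lemma:type2}. I will label the one-dimensional faces of $Q$ cyclically as $\tau_1,\ldots,\tau_n$, determine the line on which each relaxed face $\tau_r^{(-1)}$ sits, and then examine each consecutive intersection $\tau_r^{(-1)}\cap\tau_{r+1}^{(-1)}$. As in the Type 2 argument, the two horizontal edges (which relax to $y=-1$ and $y=3$) meet any slanted edge at a lattice point automatically, because every slanted edge of a Type 3 polygon has slope $1/m$ for some integer $m$; and direct computation shows the two edges meeting the central row $y=1$ intersect each other at a lattice point as well. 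Hence the only way $Q^{(-1)}$ can fail to be a lattice polygon is for an entire relaxed edge to be lost, forcing two non-adjacent relaxed faces to meet at a non-integer vertex.

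Because a Type 3 polygon is essentially left--right symmetric in its shape (with the parameter $k$ merely translating the bottom row), the potentially problematic collapses come in two independent cases: a collapse on the left, where the upper-left and lower-left slanted relaxed faces meet before reaching the short relaxed edge on that side; and the mirror-image collapse on the right. I will write down the two upper slanted relaxed faces (those bordering $y=2$ before relaxation) and the two lower slanted relaxed faces (bordering $y=0$) explicitly as linear equations in terms of $g$, $i$, $j$, and $k$, then compute the intersection of the upper-left with the lower-left relaxed face. Requiring that this intersection not lie strictly interior to the strip on which the relaxed short edge sits will, after clearing denominators, reduce to the single linear inequality $i\geq g/2$. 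The symmetric computation on the right side yields $j\geq g/2$.

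The main obstacle I anticipate is purely bookkeeping: keeping the slopes and offsets of the four slanted edges straight in terms of $g,i,j,k$, and verifying that $k$ drops out of the integrality conditions so that the bounds really depend only on $i$ and $j$. Once the line equations are correct, each critical intersection is a routine $2\times 2$ linear solve, and each integrality condition becomes a single linear inequality that rearranges cleanly into the stated bound. To finish, I will confirm that in the good range $i,j\geq g/2$ all adjacent-pair intersections do produce lattice points, so $Q^{(-1)}$ is genuinely a lattice polygon, while in the bad range at least one of the two collapses produces a vertex with non-integer coordinates. Combining the two symmetric conditions gives the stated characterization.
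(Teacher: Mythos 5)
Your overall strategy---verify that adjacent relaxed faces meet at lattice points, then isolate the edge collapses that could force two non-adjacent relaxed faces to meet at a non-integral vertex---is exactly the paper's, and the shape of your criterion (the critical intersection must not lie strictly inside the strip occupied by the relaxed edge) is also right. But your structural picture of a Type 3 polygon is wrong in a way that would derail the computation. Up to the normalization used in the paper, the Type 3 polygon has vertices $(0,0)$, $(i,0)$, $(g+1,1)$, $(k+j,2)$, $(k,2)$, $(0,1)$: its left side consists of a \emph{vertical} edge from $(0,0)$ to $(0,1)$ followed by one slanted edge up to $(k,2)$, while its right side consists of two slanted edges meeting at the single vertex $(g+1,1)$. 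There is no lower-left slanted face, the polygon is not left--right symmetric, and $k$ translates the top row rather than the bottom one; the quasi-symmetry that exchanges the two conditions is a $180^\circ$ rotation swapping the top and bottom rows (hence $i$ and $j$), not a reflection.

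More seriously, you have the collapsible edges backwards. The edges that can be lost under relaxation are precisely the two \emph{horizontal} ones, which you assumed are always safe: the bottom edge at $y=0$ has lattice length $i$ and the top edge at $y=2$ has lattice length $j$, and the conditions $i\geq g/2$ and $j\geq g/2$ are exactly the conditions for these two edges to survive. Concretely, the bottom edge collapses when the relaxed left vertical edge $x=-1$ meets the relaxed lower-right slant $x-(g+1-i)y=i+1$ at a height strictly between $-1$ and $0$ (which happens for small $i$), and the top edge collapses when the relaxed upper-left slant meets the relaxed upper-right slant at a height strictly between $2$ and $3$ (which happens for small $j$). The face pairs you propose to intersect are either nonexistent (upper-left with lower-left) or adjacent pairs whose relaxations always meet at a lattice point---the two right-hand slants always meet at $(g+2,1)$---so executing your plan literally would produce no condition at all. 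The fix is purely to re-identify the faces: once you intersect $x=-1$ with the relaxed lower-right slant, and the two relaxed upper slants with each other, the routine $2\times 2$ solves you describe do yield the stated inequalities (with $k$ dropping out as you predicted).
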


\begin{proof}Label the faces of $Q$ cyclically as $\tau_1,\ldots,\tau_6$, where $\tau_1$ is the face containing the lattice points $(k,2)$ and $(0,1)$ (with the understanding that some faces might not appear if one or more of $i$, $j$ and $k$ are equal to $0$).  If the faces $\tau_1^{(-1)},\ldots,\tau_6^{(-1)}$ are all present in the polygon $P^{(-1)}$, then they intersect at lattice points by the arguments from the previous proof.  Thus we need only be concerned with the following cases:  where $\tau_3^{(-1)}$ collapses due to $\tau_2^{(-1)}$ and $\tau_4^{(-1)}$ intersecting at a point $(x,y)$ with $0>y>-1$; and where $\tau_6^{(-1)}$ collapses due to $\tau_5^{(-1)}$ and $\tau_1^{(-1)}$ intersecting at a point $(x,y)$ with $2<y<3$.

First we consider $\tau_2^{(-1)}$ and $\tau_4^{(-1)}$.  We have that $\tau_2^{(-1)}$ lies on the line defined by $x=-1$, and that $\tau_4^{(-1)}$ lies on the line defined by $x-(g+1-i)y=i+1$.  These lines intersect at $(-1,-\frac{i+2}{g+1-i})$.  The $y$-coordinate is strictly greater than $-1$ when $\frac{i+2}{g+1-i}<1$, i.e. when $i+1<g+1-i$, which can be rewritten as $i<\frac{g}{2}$.  Thus we lose $\tau_3^{(-1)}$ to a non-lattice vertex  precisely when $i<\frac{g}{2}$.

Now we consider $\tau_5^{(-1)}$ and $\tau_1^{(-1)}$.  We have that  $\tau_1^{(-1)}$ lies on the line $x-ky=-k+1$, unless $k=0$ in which case it lies on the line $x=-1$; and that $\tau_5^{(-1)}$ lies on the line $x+(g+1-k-j)y=2g+2-k-j$.  In the event that $k\neq 0$, these intersect at $\left(\frac{gk+g-j+1}{g-j+1},\frac{2g-j+1}{g-j+1}\right)$, which has $y$-coordinate strictly smaller than $3$ when $\frac{2g-j+1}{g-j+1}<3$, or equivalently if $2g-j+1<3g-3j+1$, or equivalently if $j<\frac{g}{2}$.  For the $k=0$ case, the intersection point becomes $\left(-1,\frac{2g-j+3}{g-j+1}\right)$, which has $y$-coordinate strictly smaller than $3$ when $\frac{2g-j+3}{g-j+1}<3$, or equivalently when $2g-j+3<3g-3j-3k+3$, or equivalently when $j<\frac{g}{2}$.  Thus we have a non-lattice vertex due to $\tau_5^{(-1)}$ collapsing precisely when $j<\frac{g}{2}$.

We conclude that $Q^{(-1)}$ is a lattice polygon if and only if $i\geq g/2$ and $j\geq g/2$.
\end{proof}

Combining Lemmas \ref{lemma:type1}, \ref{lemma:type2}, and \ref{lemma:type3} and the preceding discussion, we have the following classification of maximal polygons with lattice width $4$.

\begin{prop}\label{prop:lattice_width_4_maximal}

Let $P$ be a maximal polygon of lattice width $4$.  Then up to lattice equivalence, $P$ is either $T_4$; one of the $14$ polygons in Figure \ref{figure:g1_relaxed}; or $Q^{(-1)}$, where $Q$ is a hyperelliptic polygon satisfying the conditions of Lemma \ref{lemma:type1}, \ref{lemma:type2}, or \ref{lemma:type3}.
\end{prop}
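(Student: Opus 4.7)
The plan is to reduce the classification to the three lemmas just proved by carefully decomposing on the structure of $\pint$. First I would invoke Lemma \ref{lemma:lw_facts}: for a lattice polygon $P$, either $P$ is equivalent to $T_d$ for some $d$, in which case $\lw(P)=d$, or $\lw(P)=\lw(\pint)+2$. In the first situation, $\lw(P)=4$ forces $P=T_4$ up to equivalence, and $T_4$ is indeed maximal (it has no interior lattice points to relax around in the usual sense, but it is the unique polygon of degree $4$ up to equivalence). In the second situation, $\lw(P)=4$ forces $\lw(\pint)=2$, and by Proposition \ref{prop:interior_maximal} the assumption that $P$ is maximal forces $P=\pint^{(-1)}$.

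Next I would stratify by the genus $g'=g(\pint)$ using Theorem \ref{theorem:lw_012}. If $g'=0$, then $\pint$ must be $T_2$ since that is the only genus $0$ polygon of lattice width $2$; but then $\pint^{(-1)}=T_5$ has lattice width $5$, so this case contributes nothing. If $g'=1$, then $\pint$ is one of the $14$ polygons enumerated in Figure \ref{figure:g1_lw2}, and the discussion immediately preceding the lemmas notes that each of their relaxed polygons is a lattice polygon of lattice width $4$; these are exactly the $14$ polygons displayed in Figure \ref{figure:g1_relaxed}. If $g'\geq 2$, then $\pint$ is hyperelliptic and must be of Type 1, 2, or 3, and whether $\pint^{(-1)}$ is a lattice polygon is exactly decided by Lemmas \ref{lemma:type1}, \ref{lemma:type2}, and \ref{lemma:type3}, respectively.

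The last thing I would check is that in the hyperelliptic cases, whenever the relaxation $Q^{(-1)}$ is a lattice polygon, it actually has lattice width exactly $4$ (and not more), so that it really does appear in our list. This follows because $Q=(Q^{(-1)})_{\text{int}}$ by Proposition \ref{prop:interior_maximal}, and $Q^{(-1)}$ is not equivalent to any $T_d$ (its interior polygon has lattice width $2$, not $0$ or the degenerate $-\infty$), so Lemma \ref{lemma:lw_facts} gives $\lw(Q^{(-1)})=\lw(Q)+2=2+2=4$. Stringing the three cases together then yields the stated classification: $P$ is either $T_4$, one of the $14$ polygons in Figure \ref{figure:g1_relaxed}, or $Q^{(-1)}$ for a hyperelliptic $Q$ of Type 1, 2, or 3 satisfying the corresponding inequality in Lemma \ref{lemma:type1}, \ref{lemma:type2}, or \ref{lemma:type3}.

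The main obstacle, such as it is, is purely bookkeeping: I need to make sure I haven't over- or under-counted the $g'=1$ polygons, and that I have correctly handled the boundary case $g'=0$ (which is the only place the reduction via Lemma \ref{lemma:lw_facts} gives a ``false positive'' interior polygon whose relaxation has the wrong lattice width). Once these sanity checks are in place, the proposition follows by assembling the preceding results with no further computation.
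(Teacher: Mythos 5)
Your proposal is correct and follows essentially the same route as the paper: the paper's argument is precisely the discussion preceding the proposition (split off $T_4$ via Lemma \ref{lemma:lw_facts}, note $g(\pint)=0$ gives $T_2$ with relaxation $T_5$ of the wrong lattice width, handle $g(\pint)=1$ by Figure \ref{figure:g1_relaxed}, and defer $g(\pint)\geq 2$ to Lemmas \ref{lemma:type1}--\ref{lemma:type3}). Your extra check that $\lw(Q^{(-1)})=4$ is a welcome bit of care the paper leaves implicit, though the cleanest way to rule out $Q^{(-1)}\equiv T_d$ is to note that would force $Q\equiv T_{d-3}$ with $\lw=2$, i.e.\ $Q=T_2$, which has the wrong genus.
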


The most important consequence of Propositions \ref{prop:lattice_width_3_maximal} and \ref{prop:lattice_width_4_maximal} is that we can determine which panoptigons of lattice width $1$ or lattice width $2$ are interior polygons of some lattice polygon.  We summarize this with the following result.

\begin{cor}\label{corollary:lw12_panoptigon_point_bound}
Let $Q$ be a panoptigon with $\lw(Q)\leq 2$ such that $Q^{(-1)}$ is lattice polygon.  Then $|Q\cap\mathbb{Z}^2|\leq 11$.
\end{cor}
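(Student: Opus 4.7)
The plan is to enumerate, up to equivalence, every panoptigon $Q$ with $\lw(Q)\le 2$ such that $Q^{(-1)}$ is a lattice polygon, and then verify the bound $|Q\cap\mathbb{Z}^2|\le 11$ in each case. Both ingredients are already available. On one hand, Lemmas \ref{lemma:panoptigon_g0}, \ref{lemma:panoptigon_g1}, and \ref{lemma:hyperelliptic_panoptigon} (together with Theorem \ref{theorem:lw_012}) classify all panoptigons of these lattice widths. On the other hand, the condition that $Q^{(-1)}$ be a lattice polygon is characterized by Proposition \ref{prop:lattice_width_3_maximal} for $\lw(Q)=1$ and by Proposition \ref{prop:lattice_width_4_maximal} (which packages Lemmas \ref{lemma:type1}, \ref{lemma:type2}, and \ref{lemma:type3}) for $\lw(Q)=2$.

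When $\lw(Q)=1$, Lemma \ref{lemma:panoptigon_g0} forces $Q=T_{a,b}$ with $a\le\min\{2,b\}$, and Proposition \ref{prop:lattice_width_3_maximal} imposes $a\ge \frac{1}{2}b-1$, i.e.\ $b\le 2a+2\le 6$. Since $|T_{a,b}\cap\mathbb{Z}^2|=a+b+2$, this case already yields at most $10$ lattice points.

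When $\lw(Q)=2$, I would split on genus. If $g(Q)=0$ then $Q=T_2$ has exactly six lattice points. If $g(Q)=1$, Lemma \ref{lemma:panoptigon_g1} makes the panoptigon condition automatic, and each of the fourteen polygons in Figure \ref{figure:g1_lw2} admits a lattice relaxation by Proposition \ref{prop:lattice_width_4_maximal} (see Figure \ref{figure:g1_relaxed}); a direct inspection confirms that none has more than $11$ lattice points. If $g(Q)\ge 2$ then $Q$ is hyperelliptic, and I would intersect the panoptigon conditions of Lemma \ref{lemma:hyperelliptic_panoptigon} with the relaxation conditions of Lemmas \ref{lemma:type1}--\ref{lemma:type3}. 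Type 3 is eliminated at once, because the panoptigon condition forces $i=0$ or $j=0$ while the relaxation condition requires both $i,j\ge g/2\ge 1$. Type 2 collapses to $g=2$: for $g\ge 3$ the panoptigon condition gives $i\le 1$, contradicting the relaxation condition $i\ge g/2+1\ge 5/2$; for $g=2$ the remaining constraints $i\ge 2$, $j\ge 1$ leave only the five pairs $(i,j)\in\{(2,1),(2,2),(3,1),(3,2),(4,1)\}$. Type 1 is pinned down by $g\le 3$ together with $i\le (3g+1)/2$, leaving $i\in\{2,3\}$ when $g=2$ and $i\in\{3,4,5\}$ when $g=3$. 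At most ten polygons survive from the hyperelliptic case, and I would list them explicitly and count lattice points by hand.

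The main obstacle is the bookkeeping in the hyperelliptic subcase: one has to verify that the extremal Type 1 polygon (with $g=3$, $i=5$) and the extremal Type 2 polygon (with $g=2$ and $(i,j)$ on the upper boundary of the admissible region), together with the fourteen genus-$1$ polygons, never push $|Q\cap\mathbb{Z}^2|$ beyond $11$. The algebra of intersecting the two sets of conditions is straightforward, but one must handle the parity side-conditions from Lemma \ref{lemma:hyperelliptic_panoptigon} in Type 3 carefully; this is what ultimately forces that type to be empty, and the bound $11$ to be sharp rather than $10$.
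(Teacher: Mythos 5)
Your proposal is correct and follows essentially the same route as the paper: split by lattice width and genus, then intersect the panoptigon classifications (Lemmas \ref{lemma:panoptigon_g0} and \ref{lemma:hyperelliptic_panoptigon}) with the relaxation criteria (Proposition \ref{prop:lattice_width_3_maximal} and Lemmas \ref{lemma:type1}--\ref{lemma:type3}), with the extremal case being Type 1 at $g=3$ giving $3g+2=11$. The only differences are organizational (the paper splits the hyperelliptic case by the height of the panoptigon point rather than by Type, and bounds lattice-point counts by formula rather than by explicit enumeration), plus one small slip in your closing remarks: Type 3 is killed by the incompatibility of $i=0$ or $j=0$ with $i,j\ge g/2$, as you correctly argue earlier, not by the parity conditions.
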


\begin{proof}
If $\lw(Q)= 1$ with $Q^{(-1)}$ a lattice polygon, then $Q$ must be the trapezoid $T_{a,b}$ with $0\leq a\leq b$, $b\geq 1$, and $a\geq \frac{b}{2}-1$ by Proposition \ref{prop:lattice_width_3_maximal}.  In order for $T_{a,b}$ to be a panoptigon, we need $a\leq 2$ by Lemma \ref{lemma:panoptigon_g0}, so $2\geq \frac{b}{2}-1$, implying $b\leq 6$.  It follows that $|Q\cap\mathbb{Z}^2|=a+b+2\le 2+6+2=10$.

Now assume $\lw(Q)= 2$  with $Q^{(-1)}$ a lattice polygon.  If $Q$ has genus $0$ then it is $T_2$, and has $6$ lattice points.  If $Q$ has genus $1$ then it is one of the polygons in Figure \ref{figure:g1_lw2}, and so has at most $9$ lattice points.  Outside of these situations, we know that $Q$ is a  hyperelliptic panoptigon of genus $g\geq 2$ as characterized in Lemma \ref{lemma:hyperelliptic_panoptigon}. We deal with two cases:  where $Q$ has a panoptigon point at height $1$, and where it does not.

In the first case, we either have $g=2$ with $Q$ of Type 1 or Type 2, or $g=3$ with $Q$ of Type 1.  A hyperelliptic polygon of Type 1 has $(i+1)+(1+2g-i)=2g+2$ boundary points. A hyperelliptic polygon of Type 2 has $i+j+3$ boundary points.  If $Q$ is of Type 1, then it has in total $3g+2\leq11 $ lattice points.  If $Q$ is of Type 2, then $i+j\leq 2g+1=2\cdot 2+1=5$, implying that $Q$ has a total of $i+j+3+g\leq 5+3+2=10$ lattice points. 

In the second case, we know that $Q$ must have at most $3$ points at height $0$ or $2$, and exactly $1$ point at the other height. First we claim that $Q$ cannot be of Type 1: there are $2g+2\geq 6$ boundary points, all at height $0$ or $2$, and $Q$ can have at most $4$ points total at those heights.  For Types 2 and 3,  we know by Lemmas \ref{lemma:type2} and \ref{lemma:type3} that either $i\geq\frac{g}{2}+1$ and $j\geq \frac{g-1}{2}$, or  $i\geq\frac{g}{2}$ and $j\geq \frac{g}{2}$.  At least one of $i$ and $j$  must equal $0$ to allow for a single point at height $0$ or height $2$, so these inequalities are impossible for $g\geq 2$.  Thus $Q$ cannot have Type 2 or Type 3 either, and this case never occurs.

We conclude that if $Q$ is a panoptigon of lattice width $1$ or $2$ such that $Q^{(-1)}$ is a lattice polygon, then $|Q\cap\mathbb{Z}^2|\leq 11$.

\end{proof}

\section{Big face graphs are not tropically planar}\label{section:big_face_graphs}

Let $G$ be a planar graph.  Recall that we say that $G$ is a \emph{big face graph} if for any planar embedding of $G$,  there exists a bounded face that shares an edge with every other bounded face.  Our main examples of big face graphs will come from the following construction.  First we recall the construction of a \emph{chain} of genus $g$ from \cite[\S 6]{BJMS}.  Start with with $g$ cycles in a row, connected at $g-1$ $4$-valent vertices.  We will resolve each of these $4$-valent vertices to result in two $3$-valent vertices in one of two ways.  Let $v$ be a vertex, incident to the edges $e_1,e_2,f_1,f_2$ where $e_1$ and $e_2$ are part of one cycle and $f_1$ and $f_2$ are part of another.  We will remove $v$ and replace it with two connected vertices $v_1$ and $v_2$, and we will either connect $v_1$ to $e_1$ and $f_1$ and $v_2$ to $e_2$ and $f_2$; or we will connect $v_1$ to $e_1$ and $e_2$ and $v_2$ to $f_1$ and $f_2$.  Any graph obtained from making such a choice at each vertex is then called a chain.  Figure \ref{figure:chain_construction} illustrates, for $g=3$, the starting $4$-regular graph; the two ways to resolve a $4$-valent vertex; and the resulting chains of genus $3$.  We remark that although there are $2\times 2 = 4$ ways to choose the vertex resolutions, two of them yield isomorphic graphs, giving us $3$ chains of genus $3$ up to isomorphism.  Note that for every genus, there is exactly one chain that is bridge-less, i.e. $2$-edge-connected.

\begin{figure}[hbt]
\centering
\includegraphics{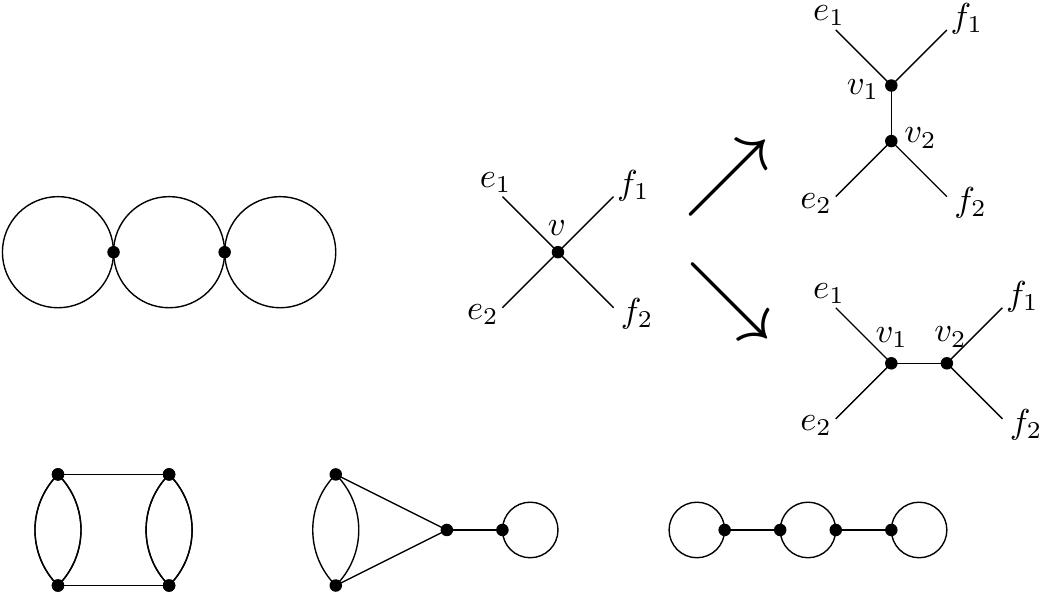}
\caption{The starting $4$-regular graph in the chain construction; the two choices for resolving a $4$-valent vertex; and the three chains of genus $3$, up to isomorphism}
\label{figure:chain_construction}
\end{figure}

 Given a chain of genus $g$, we construct a \emph{looped chain} of genus $g+1$ by adding an edge from the first cycle to the last one.  The looped chains of genus $4$ corresponding to the chains of genus $3$ are illustrated in Figure \ref{figure:chains_and_looped_chains}. For larger genus, we remark that two non-isomorphic chains can give rise to isomorphic looped chains.

\begin{figure}[hbt]
\centering
\includegraphics{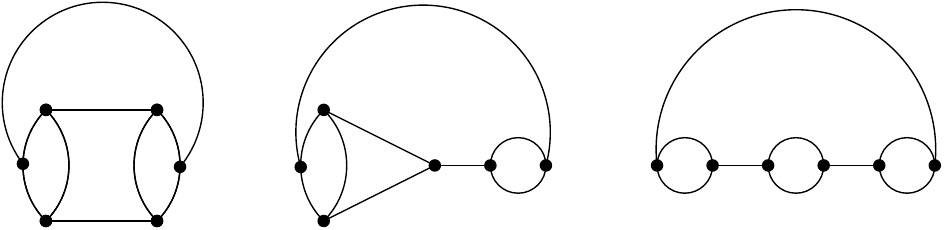}
\caption{The looped chains of genus $4$ }
\label{figure:chains_and_looped_chains}
\end{figure}

In order to argue that any looped chain is a big face graph, we recall the following useful result.   By a special case of Whitney's $2$-switching theorem \cite[Theorem 2.6.8]{graphs_on_surfaces}, if $G$ is a $2$-connected graph, then any other planar embedding can be reached, up to weak equivalence\footnote{Weak equivalence means two graph embeddings have the same facial structure, although possibly with different unbounded faces.}, from the standard embedding by a sequence of \emph{flippings}.  A flipping of a planar embedding finds a cycle $C$ with only two vertices $v$ and $w$ incident to edges exterior to $C$, and then reverses the orientation of $C$ and all vertices and edges interior to $C$ to obtain a new embedding.  This process is illustrated in Figure \ref{figure:2-flip}, where $C$ is the highlighted cycle $v-a-b-w-d-v$.

\begin{figure}[hbt]
\centering
\includegraphics{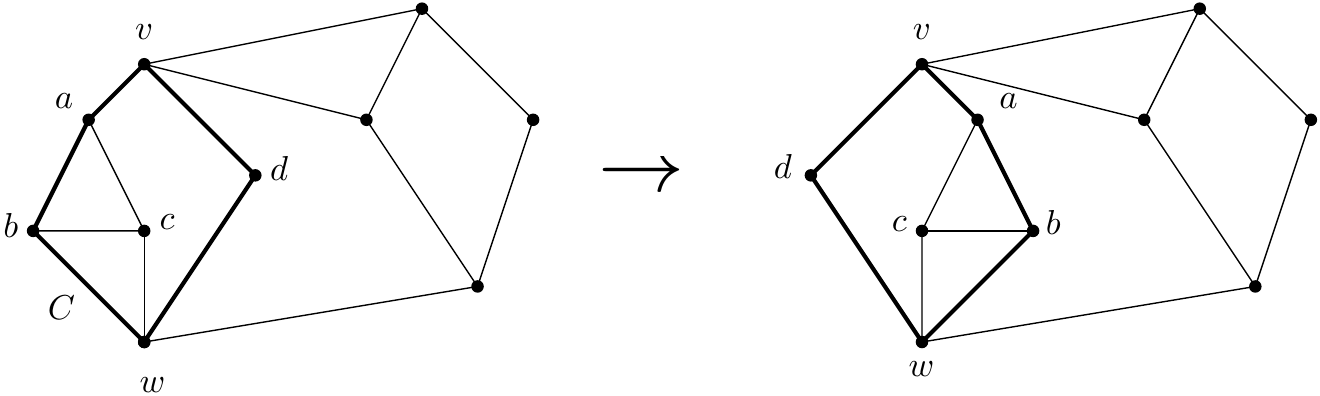}
\caption{Two embeddings of a planar graph related by a flipping}
\label{figure:2-flip}
\end{figure}

\begin{lemma}\label{lemma:looped_chain}
Any looped chain is a big face graph.
\end{lemma}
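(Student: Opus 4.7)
My plan is to analyze the standard planar embedding first, identify two distinguished faces $B$ and $U$ that together handle every possible choice of unbounded face, and then propagate this to every planar embedding via the flipping machinery just set up in the excerpt. Fix a looped chain $G$ of genus $g+1$ arising from a chain $G_0$ of genus $g$ by adding a loop edge $\ell$, and consider the standard planar embedding in which the cycles $C_1,\ldots,C_g$ of $G_0$ are drawn in a horizontal row and $\ell$ is drawn as an arc above them, from a vertex of $C_1$ to a vertex of $C_g$. The bounded faces are then the $g$ interiors $F_i$ of the cycles together with the face $B$ enclosed by $\ell$ and the upper boundary of the chain; the face $U$ lying below the chain is unbounded. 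A short case check over junction types (theta-type shared edge versus bridge-type connecting edge) confirms that each $C_i$ has at least one edge on the upper side and at least one on the lower side of the chain, so $F_i$ shares an edge with both $B$ and $U$. Combined with the fact that $B$ and $U$ meet along $\ell$ itself, this shows that in the standard embedding each of $B$ and $U$ shares an edge with every other face.

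Next I would verify that $G$ is $2$-connected. Even though the chain $G_0$ may contain cut vertices and cut edges at bridge-type resolutions, the loop edge $\ell$ provides an alternate path between the two ends of the chain, so removing any single vertex of $G$ leaves it connected: a cycle-interior vertex only opens a cycle into a path, a theta-type resolved vertex has a companion that still carries both sides, and any bridge-type resolved vertex or endpoint of $\ell$ is rescued by a path through $\ell$ (using that the endpoints of $\ell$ lie on $C_1$ and $C_g$ respectively, at distinct $2$-valent vertices of the chain). By the version of Whitney's $2$-switching theorem quoted immediately before the lemma, every planar embedding of the $2$-connected graph $G$ is obtained from the standard embedding by a sequence of flippings, up to weak equivalence, i.e., up to the choice of unbounded face. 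Both flippings and weak equivalence preserve the combinatorial face--edge incidence data, and therefore preserve which faces share which edges.

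It follows that in every planar embedding of $G$, the combinatorial faces corresponding to $B$ and $U$ remain adjacent to every other face. Since $B$ and $U$ are distinct, at most one of them can be the unbounded face in any given embedding, so the other is a bounded face adjacent to every other bounded face; this is exactly the big face property. The main obstacle I expect is the combinatorial step verifying that each $F_i$ is adjacent to both $B$ and $U$ in the standard embedding, since it requires unpacking the chain construction carefully, including degenerate cases such as digon cycles and cycles carrying a mixture of bridge-type and theta-type junctions; the remaining steps are a clean application of Whitney's theorem and of the pigeonhole observation that two distinct faces cannot both be unbounded.
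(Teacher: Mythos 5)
Your setup is fine and matches the paper's: you analyze the standard embedding, exhibit one bounded and one unbounded face each sharing an edge with every other face, note that the looped chain is $2$-connected, and invoke Whitney's $2$-switching theorem to reduce to flippings. The gap is in the sentence ``Both flippings and weak equivalence preserve the combinatorial face--edge incidence data, and therefore preserve which faces share which edges.'' Weak equivalence preserves facial structure by definition, but flippings in general do \emph{not}: a flipping along a cycle $C$ with attachment vertices $v,w$ reflects the interior of $C$, so the two exterior faces meeting the two $v$--$w$ arcs of $C$ exchange those arcs and acquire different edge sets. This is precisely why Whitney's theorem is a statement about \emph{generating} the distinct embeddings of a $2$-connected graph rather than a uniqueness statement, and it is illustrated by Figure \ref{figure:2-flip} in the paper, where the two embeddings related by a flipping have different bounded faces. (A clean counterexample to your claim: two vertices joined by four internally disjoint paths $P_1,\dots,P_4$; reordering them cyclically via a flipping changes which pairs $P_i\cup P_j$ bound faces.) If your claim were true, any $2$-connected planar graph with a dominating face in one embedding would automatically be a big face graph, which would make the lemma contentless.

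What is missing is exactly the content of the paper's proof: a classification of the cycles $C$ in a looped chain that admit a flipping. The paper shows that the two exterior edges $e,f$ at the attachment vertices of any such $C$ must form a $2$-edge-cut, hence must lie among the connecting edges $e_1,\dots,e_k$ between the bridge-less chain blocks $G_1,\dots,G_k$, and therefore $C$ together with its interior is one of the $G_i$ (or the whole graph). Flipping a bridge-less chain block is an automorphism of the embedded (unlabelled) graph, so the standard embedding is invariant under all flippings and the big-face property propagates to every embedding. You would need to supply this argument, or some substitute for it, before your final paragraph goes through.
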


\begin{proof}   In the standard embedding of a looped chain as in Figure \ref{figure:chains_and_looped_chains}, there are (at least) two faces that share an edge with all other faces:  one bounded and one unbounded.  Since any looped chain is $2$-connected, any other embedding can be reached, up to weak equivalence, by a sequence of flippings.   It thus suffices to show that the standard embedding of a looped chain is invariant under flipping.

Consider the standard embedding of a looped chain $G$, and assume that that $C$ is a cycle in $G$ that has exactly two vertices $v$ and $w$ incident to edges exterior to $C$.  Let $\overline{C}$ denote the set of all vertices in or interior to $C$.  Since $G$ is trivalent and $C$ is $2$-regular, we know that $v$ and $w$ are each incident to exactly one edge, say $e$ for $v$ and $f$ for $w$, that is exterior to $C$.  We now deal with two possibilities:  that $\overline{C}=V(G)$, and that $\overline{C}\subsetneq V(G)$.

If $\overline{C}=V(G)$, then $e=f$, and the only possibility is that $v$ and $w$ are the vertices added to a chain $H$ to build the looped chain $G$; that $H$ is the bridge-less chain; and that $C$ is the outside boundary of $H$ in its standard embedding.  Flipping with respect to $C$ does not change the embedding of this graph.

\begin{figure}[hbt]
\centering
\includegraphics{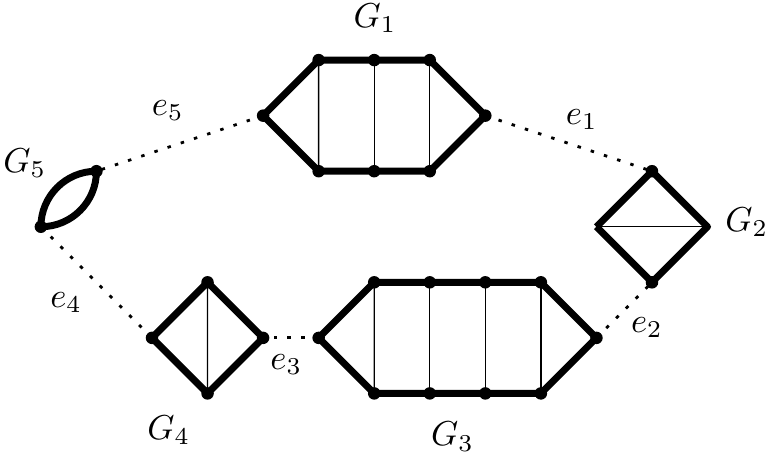}
\caption{The structure of a looped chain, where the bridge-less chains $G_i$ have solid edges and the edges $e_i$ are dotted; the boundaries of the $G_i$ are bold, and are the only possible choices of $C$ for a flipping}
\label{figure:looped_chain_possible_cycles}

\end{figure}

If $\overline{C}\subsetneq V(G)$, then $\{e,f\}$ forms a $2$-edge-cut for $G$, separating it into $\overline{C}$ and $\overline{C}^C$.  Consider the structure of $G$:  is is a collection of $2$-edge-connected graphs $G_1,\ldots,G_k$, namely a collection of bridge-less chains, connected in a loop by edges $e_1,\ldots,e_k$, where $e_i$ connects $G_i$ and $G_{i+1}$, working modulo $k$; see Figure \ref{figure:looped_chain_possible_cycles} for this labelling scheme.  We claim that $e,f\in\{e_1,\ldots,e_k\}$.  If not, then without loss of generality $e$ is in some bridge-less chain $G_i$.  If $f\in E(G_j)$ for $j\neq i$, then the graph remains connected; the same is true if $f\in\{e_1,\ldots,e_k\}$.  So we would need $f$ to also be in $G_i$.  By the structure of the looped chain, we would the removal of $e$ and $f$ to disconnect $G_i$ into multiple components, at least one of which is not incident to $e_i$ or $e_{i+1}$; however, this is impossible based on the structure of a bridge-less chain. It follows that $e$ and $f$ must be among $e_1,\ldots,e_k$.  The only way to choose a pair $\{e,f\}$ from among $e_1,\ldots,e_k$ so that they are the only exterior edges incident to the boundary of a cycle $C$ is if they are incident to the same bridge-less chain $G_i$; that is, if up to relabelling we have  $e=e_i$ and $f=e_{i+1}$ for some $i$.  Thus $C$ and its interior constitutes one of the bridge-less chains $G_i$.  But flipping a bridge-less chain does not change the embedding of our (unlabelled) graph, completing the proof.
\end{proof}

\begin{figure}[hbt]
\centering
\includegraphics{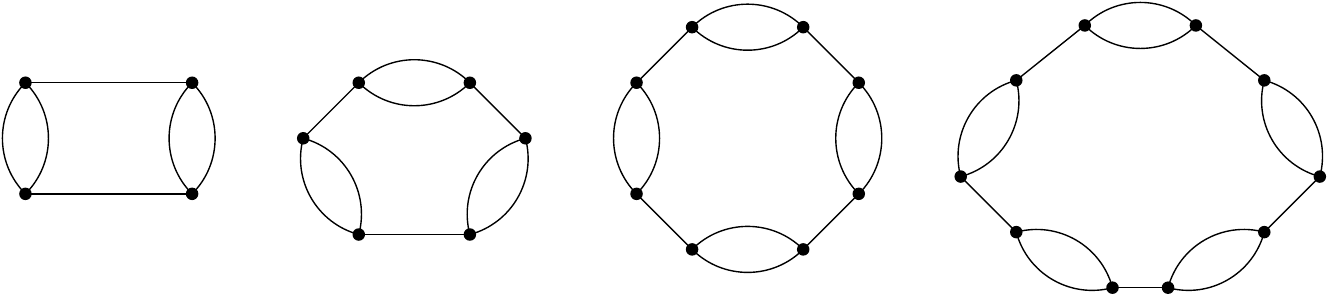}
\caption{The loop of loops $L_g$ for $3\leq g\leq 6$}
\label{figure:loop_of_loops}

\end{figure}

We summarize the connection between big face graphs and panoptigons in the following lemma.

\begin{lemma}\label{lemma:big_face_panoptigon_lemma}
Suppose that $G$ is a tropically planar big face graph arising from a polygon $P$.  Then $\pint$ is a panoptigon.
\end{lemma}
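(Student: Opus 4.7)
My plan is to translate the big face property of $G$ into a combinatorial statement about the triangulation $\Delta$ from which $G$ arises, and then invoke the unimodularity of $\Delta$. Since $G$ is tropically planar arising from $P$, there is a regular unimodular triangulation $\Delta$ of $P$ whose dualization and skeletonization produces $G$, together with a specific planar embedding of $G$ as described in Section \ref{section:lattice_polygons}. In that embedding, the bounded faces of $G$ are in bijection with the interior lattice points of $P$, and two bounded faces share an edge if and only if the corresponding interior lattice points are joined by an edge of $\Delta$. I would apply the big face hypothesis to this particular embedding.

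From this, the big face condition produces an interior lattice point $p$ of $P$ whose dual bounded face $F_p$ shares an edge with every other bounded face of $G$. Translating through the above dictionary, $p$ is connected by an edge of $\Delta$ to every other interior lattice point $q$ of $P$. Since $\Delta$ is unimodular, each of its triangles has area $\tfrac{1}{2}$, and so each of its edges is primitive: by Pick's theorem a lattice triangle of area $\tfrac{1}{2}$ has exactly three boundary lattice points, namely its vertices. Hence for each such $q$, the segment $\overline{pq}$ contains no lattice points besides its endpoints, so $q$ is visible from $p$.

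To finish, I would note that the lattice points of $\pint$ coincide with the interior lattice points of $P$: one inclusion is immediate from the definition of $\pint$, and for the other, any lattice point of $\pint$ lies in the convex set $\mathrm{int}(P)$ (which contains every interior lattice point of $P$), hence is itself an interior lattice point of $P$. Thus $p\in \pint\cap\ZZ^2$ is visible from every other point of $\pint\cap\ZZ^2$, showing that $\pint$ is a panoptigon with panoptigon point $p$.

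I do not anticipate any serious obstacle: the whole argument amounts to combining the face-to-lattice-point dictionary for the $\Delta$-induced embedding with the primitivity of edges in a unimodular triangulation. The only mildly subtle point is making sure to apply the big face hypothesis to the correct embedding of $G$, rather than to an arbitrary embedding where the bounded faces might not correspond to interior lattice points of $P$.
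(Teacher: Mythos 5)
Your proposal is correct and follows essentially the same route as the paper: pass to the embedding of $G$ induced by the triangulation $\Delta$, use the duality between bounded faces and interior lattice points to find a point $p$ joined by an edge of $\Delta$ to every other interior lattice point, and conclude via primitivity of edges in a unimodular triangulation. You simply spell out two details the paper leaves implicit (the Pick's-theorem justification of primitivity and the identification of $\pint\cap\ZZ^2$ with the interior lattice points of $P$), both of which are fine.
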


This is not an if-and-only-if statement, since not all triangulations of $P$ connect a point of $\pint$ to all other points of $\pint$; for instance, the chain of genus $3$ with two bridges is not a big face graph, but by \cite[\S 5]{BJMS} it arises from $T_4$ whose interior polygon is a panoptigon.

\begin{proof}
Let $\Delta$ be a regular unimodular triangulation of $P$ such that $G$ is the skeleton of the weak dual graph of $\Delta$.  The embedding of $G$ arising from this construction must have a bounded face $F$ bordering all other faces. By duality, we know that $F$ corresponds to an interior lattice point $p$ of $P$.  Since $F$ shares an edge with all other bounded faces, dually $p$ is connected to each other interior point of $P$ by a primitive edge in $\Delta$.  Thus $\pint$ is a panoptigon, with $p$ a panoptigon point for it.
\end{proof}

One common example of a looped chain of genus $g$ is the \emph{loop of loops} $L_g$,  obtained by connecting $g-1$ bi-edges in a loop. This is illustrated in Figure \ref{figure:loop_of_loops} for $g$ from $3$ to $6$.  For low genus, the loop of loops is tropically planar. Figure \ref{figure:triangulations_for_lol} illustrates polygons of genus $g$ for $3\leq g\leq 10$ along with collections of edges emanating from an interior point; when completed to a regular unimodular triangulation\footnote{One way to see that this can be accomplished is to use a placing triangulation \cite[\S 3.2.1]{triangulations}, where the highlighted panoptigon point is placed first and the other lattice points are placed in any order.}, they will yield $L_g$ as the dual tropical skeleton.  Thus $L_g$ is tropically planar for $g\leq 10$.  Another example of a tropically planar looped chain, this one of genus $11$, is pictured in Figure \ref{figure:g11_big_face}, along with a regular unimodular triangulation of a polygon giving rise to it.  Since the theta graph of genus $2$ is also tropically planar \cite[Example 2.5]{BJMS} and is a big face graph, there  exists at least one tropically planar big face graph of genus $g$ for $2\leq g\leq 11$.  We are now ready to prove that this does not hold for $g\geq 14$.

\begin{figure}[hbt]
\centering
\includegraphics[scale=0.8]{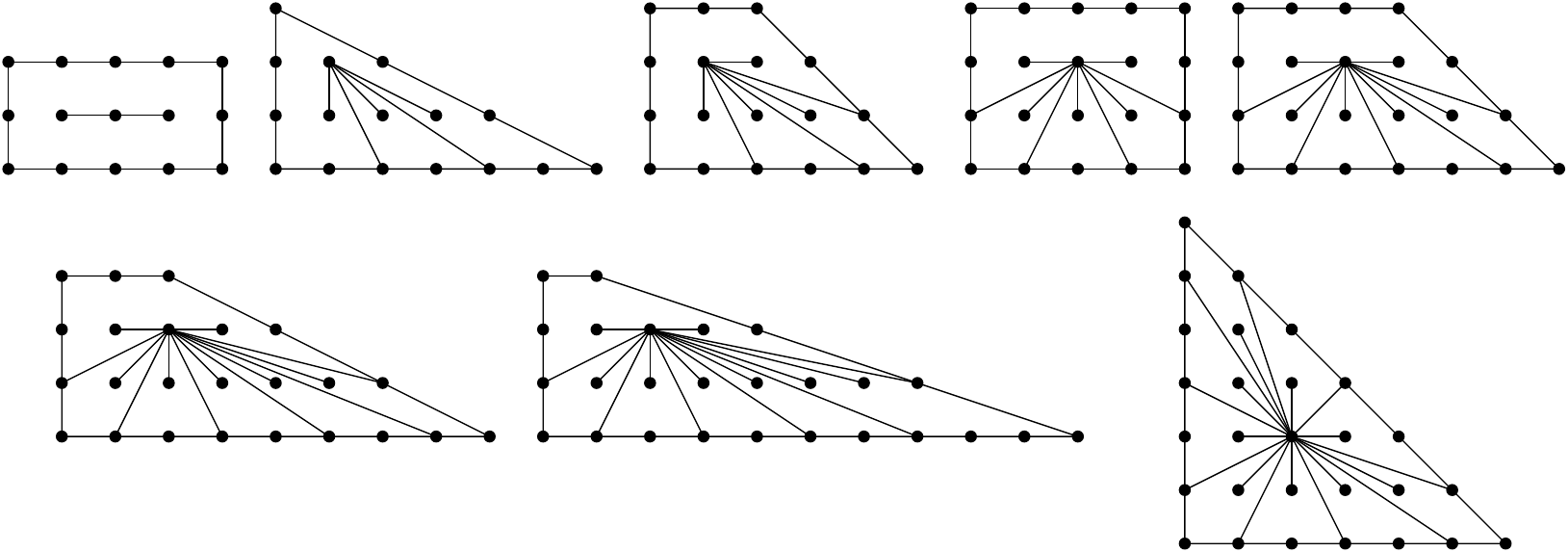}
\caption{Starts of triangulations that will yield the loop of loops as the dual tropical skeleton }
\label{figure:triangulations_for_lol}

\end{figure}

\begin{figure}[hbt]
\centering
\includegraphics{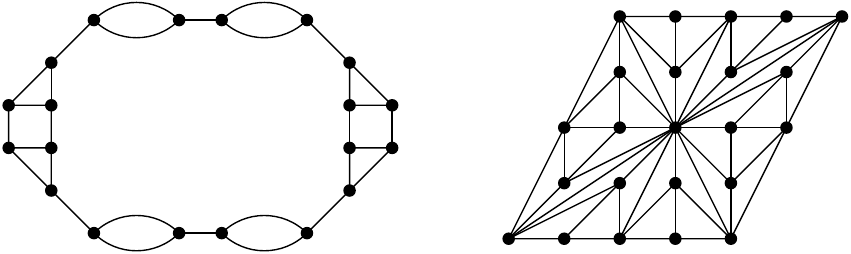}
\caption{A tropically planar big face graph of genus $11$, with a regular unimodular triangulation giving rise to it }
\label{figure:g11_big_face}

\end{figure}

\begin{proof}[Proof of Theorem \ref{theorem:big_face_graphs}]  Let $G$ be a tropically planar big face graph, and let $P$ be a lattice polygon giving rise to it. By Lemma \ref{lemma:big_face_panoptigon_lemma}, $\pint$ is a panoptigon.  If $\lw(\pint)\leq 2$, then $g=|\pint\cap\mathbb{Z}^2|\leq 11$ by Corollary \ref{corollary:lw12_panoptigon_point_bound}.  If $\lw(\pint)\geq 3$, then $g=|\pint\cap\mathbb{Z}^2|\leq 13$ by Theorem \ref{theorem:at_most_13}.  Either way, we may conclude that the genus of $G$ is at most $13$.
\end{proof}

It follows, for instance, that no looped chain of genus $g\geq 14$ is tropically planar.

If we are willing to rely on our computational enumeration of all non-hyperelliptic panoptigons, we can push this further:  there does not exist a tropically planar big face graph for $g\geq 12$, and this bound is sharp.  We have already seen in Figure \ref{figure:g11_big_face} that there exists a tropically planar big face graph of genus $11$.  To see that none have higher genus, first note that if $\pint$ is a panoptigon with $12$ or $13$ lattice points, then $\pint$ must be non-hyperelliptic by Corollary \ref{corollary:lw12_panoptigon_point_bound}.  Thus $\pint$ must be one of the $15$ non-hyperelliptic panoptigons with $12$ lattice points, or one of the $8$ non-hyperelliptic panoptigons with $13$ lattice points, as presented in Appendix \ref{section:appendix}.  However, for each of these polygons $Q$, we have verified computationally that $Q^{(-1)}$ is not a lattice polygon; see Figure \ref{figure:panoptigons_12_or_13}.  Thus no lattice polygon of genus $g\geq 12$ has an interior polygon that is also a panoptigon.  It follows from Lemma \ref{lemma:big_face_panoptigon_lemma} that no big face graph of genus larger than $11$ is tropically planar.

We close with several possible directions for future research.

\begin{itemize}
    \item For any lattice point $p$, let $\textrm{vis}(p)$ denote the set of all lattice points visible to $p$ (including $p$ itself).  Given a convex lattice polygon $P$, define its \emph{visibility number} to be the minimum number of lattice points in $P$ needed so that we can see every lattice point from one of them:
\[V(P)=\min\left\{|S|\,:\, S\subset P\cap\mathbb{Z}^2\textrm{ and } P\cap\mathbb{Z}^2\subset \bigcup_{p\in S}\textrm{vis}(p) \right\}.\]
Thus $P$ is a panoptigon if and only if $V(P)=1$.  Classifying polygons of fixed visibility number $V(P)$, or finding relationships between $V(P)$ and such properties as genus and lattice width, could be interesting in its own right, and could provide new criteria for determining whether graphs are tropically planar; for instance, the prism graph $P_n=K_2\times C_n$ can only arise from a polygon $P$ with $V(P)\leq 2$.  This question is in some sense a lattice point version of the art gallery problem. 

\item  We can generalize from two-dimensional panoptigons to $n$-dimensional \emph{panoptitopes}, which we define to be convex lattice polytopes containing a lattice point $p$ from which all the polytope's other lattice points are visible. A few of our results generalize immediately; for instance, the proof of Lemma \ref{lemma:panoptigon_g1} works in $n$-dimensions, so any polytope with exactly one interior lattice point is a panoptitope. A complete classification of $n$-dimensional panoptitopes for $n\geq 3$ will be more difficult than it was in two-dimensions, especially since it is no longer the case that there are finitely many polytopes with a fixed number of lattice points.  Results about panoptitopes would also have applications in tropical geometry; for instance, an understanding of three-dimensional panoptitopes would have implications for the structure of tropical surfaces in $\mathbb{R}^3$.

\item  To any lattice polygon we can associate a toric surface \cite{toric_varieties}.  An interesting question for future research would be to investigate those toric surfaces that are associated to panoptigons, or more generally toric varieties associated to panoptitopes.

\end{itemize}

\appendix

\begin{appendices}

\section{Panoptigon computations}
\label{section:appendix}

From the proof of Theorem \ref{theorem:at_most_13}, we know that any panoptigon of lattice width and lattice diameter both at least $3$ must be equivalent to a polygon consisting of some subset of the thirty lattice points pictured in Figure \ref{figure:ruling_out_points}, where the points $(0,0)$, $(-1,-1)$, $(0,-1)$, $(1,-1)$, and $(2,-1)$ must be included.  Using \texttt{polymake} \cite{polymake}, we ran through all possible convex polygons consisting only of these $30$ points.  Ruling out those without interior lattice points or with all lattice points collinear, we found a total of $215$ distinct polygons, some of which were equivalent under a unimodular transformation. These $215$ polygons are available as the collection \enquote{Non-hyperelliptic Panoptigons} in polyDB \cite{polydb:paper} at \url{https://db.polymake.org}.  Eliminating redundant copies, we find that there are a total of $69$ non-hyperelliptic panoptigons of lattice width and lattice diameter both at least $3$, up to lattice equivalence; these appear in Figure \ref{figure:panoptigons_all}. 
  The panoptigons with $12$ or $13$ lattice points appear in Figure \ref{figure:panoptigons_12_or_13}, along with their relaxed polygons.  Each relaxed polygon has at least one nonlattice vertex, marked by a square.  The computation of these relaxed polygons verifies that no non-hyperelliptic panoptigon with $12$ or $13$ lattice points is the interior polygon of a lattice polygon. 

To complete an enumeration of all non-hyperelliptic panoptigons of genus $g\geq 3$, it remains to find those panoptigons $P$ that have lattice diameter smaller than $3$.
We accomplish this with the following proposition.

\begin{prop}\label{prop:possibilities_ld2}
Let $P$ be a non-hyperelliptic panoptigon of lattice diameter at most $2$.  Then up to lattice equivalence $P$ is either the triangle $\textrm{conv}((0,1),(0,3),(4,0))$, the quadrilateral $\textrm{conv}((1,0),(2,0),(3,1),(0,3))$, or the quadrilateral $\textrm{conv}((0,1),(0,2),(2,3),(3,0))$.
\end{prop}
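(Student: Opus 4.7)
The plan is to use the non-hyperelliptic hypothesis together with the lattice diameter bound to severely constrain the interior polygon $\pint$, and then to enumerate the possibilities for $P$ in the surviving cases by a visibility analysis centered on the panoptigon point.

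First I would narrow $\pint$ down to a finite list. Since $P$ is non-hyperelliptic, $\pint$ is two-dimensional, so in particular $g(P)\ge 3$. Moreover $P$ cannot be equivalent to any $T_d$: the triangles $T_2$ and $T_3$ are hyperelliptic, while $T_d$ for $d\ge 4$ has $\ell(T_d)=d>2$. Thus Lemma~\ref{lemma:lw_facts} gives $\lw(P)=\lw(\pint)+2$, and combined with the inequality $\lw(P)\le\lfloor 4\ell(P)/3\rfloor+1\le 3$ from~\cite{bf}, this forces $\lw(\pint)=1$. By Theorem~\ref{theorem:lw_012}, $\pint$ is equivalent to some $T_{a,b}$ with $0\le a\le b$ and $b\ge 1$. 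Since $\ell(\pint)\le \ell(P)\le 2$ and the bottom edge of $T_{a,b}$ already contains $b+1$ collinear lattice points, $b\le 2$. Finally, Proposition~\ref{prop:interior_maximal} demands that $T_{a,b}^{(-1)}$ be a lattice polygon, which by Proposition~\ref{prop:lattice_width_3_maximal} requires $a\ge b/2-1$. These constraints leave exactly the five possibilities $\pint\in\{T_{0,1},T_{1,1},T_{0,2},T_{1,2},T_{2,2}\}$.

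Next, I would place $\pint$ in standard position so that $\pint\subseteq\mathbb{R}\times[0,1]$; since $\lw(P)=3$ and $\pint$ sits strictly inside $P$, this forces $P\subseteq\mathbb{R}\times[-1,2]$, with lattice vertices of $P$ attained at each of the extreme heights $y=-1$ and $y=2$. For each of the five cases, the panoptigon point $p$ must see every other lattice point of $P$, which severely restricts which lattice points of $\pint^{(-1)}$ can appear on $\partial P$. In the three cases $\pint\in\{T_{0,2},T_{1,2},T_{2,2}\}$ the interior polygon contains the three collinear lattice points $(0,0),(1,0),(2,0)$, so the hypothesis $\ell(P)\le 2$ forbids any further lattice points of $P$ at height $y=0$. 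After enumerating the short list of viable candidates for $p$ (which must see all three of these points, plus any interior point at height $y=1$), I would show that the resulting visibility restrictions on $\partial P$, together with the requirements that $P$ contain a lattice point at each of $y=-1$ and $y=2$ and that no edge of $P$ pass through a lattice point of $\pint$, leave no room for a valid convex polygon $P$.

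The remaining two cases are handled by direct enumeration. For $\pint=T_{0,1}$, the maximal polygon is (up to translation) $T_4$, and a case analysis on the panoptigon point yields exactly two non-equivalent panoptigons with $\ell(P)\le 2$: the triangle $\textrm{conv}((0,1),(0,3),(4,0))$ and the quadrilateral $\textrm{conv}((1,0),(2,0),(3,1),(0,3))$. For $\pint=T_{1,1}$, the maximal polygon is a $3\times 3$ square, and a similar visibility-plus-convexity argument yields the single quadrilateral $\textrm{conv}((0,1),(0,2),(2,3),(3,0))$. The main obstacle is the bookkeeping required for the case analysis that rules out $T_{0,2},T_{1,2},T_{2,2}$ and enumerates the subpolygons in the other two cases; a useful organizing device throughout is, for each candidate panoptigon point $p$, to tabulate the lattice points of $\pint^{(-1)}$ visible from $p$ and then to check convexity and lattice-diameter constraints against this finite set.
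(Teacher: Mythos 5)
Your proposal follows essentially the same route as the paper's proof: both reduce to $\lw(P)=3$ and $\ell(P)=2$ via the B\'ar\'any--F\"uredi inequality, narrow $\pint$ down to the five trapezoids $T_{a,b}$ with $0\le a\le b\le 2$, discard the three cases with $b=2$ using the three collinear interior lattice points together with $\ell(P)\le 2$, and settle $T_{0,1}$ and $T_{1,1}$ by a case analysis on the location of the panoptigon point. The reduction to the five candidates is fully and correctly argued; the concluding case analyses are only sketched, but they are precisely the ones the paper carries out, so the plan is sound.
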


These three polygons are illustrated in Figure \ref{figure:possibilities_ld2}.

\begin{figure}[hbt]
\centering
\includegraphics{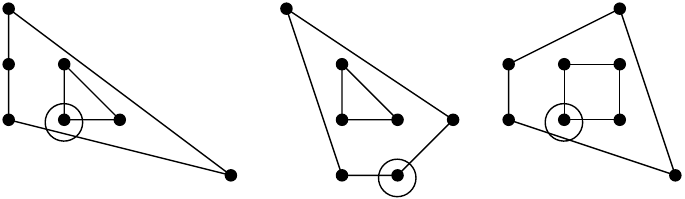}
\caption{The three non-hyperelliptic panoptigons from Proposition \ref{prop:possibilities_ld2}}
\label{figure:possibilities_ld2}

\end{figure}

\begin{proof}

Since $P$ is non-hyperelliptic, we know that $\lw(P)\geq 3$.  
Note that we cannot have $\ell(P)=1$, since then we would have $\lw(P)\leq\lfloor\frac{4}{3}\ell(P)\rfloor+1=2$.  Thus $\ell(P)=2$.  It follows that  $\lw(P)\leq\lfloor\frac{4}{3}\ell(P)\rfloor+1$=2+1=3, so $\lw(P)=3$.  We know $P$ is not $T_3$ since $T_3$ is hyperelliptic, so we know that the interior polygon $P_{\textrm{int}}$ must have lattice width $1$.  It follows that $\pint$ must be a trapezoid of height $1$, and since $\ell(P)=2$ that trapezoid must have at most $3$ lattice points at each height; thus $\pint=T_{a,b}$ where $0\leq a\leq b\leq 2$.  It follows that $P$ must be contained in one of the polygons pictured in Figure \ref{figure:possibilities_for_pint}; these are the maximal polygons associated to the candidates for $\pint$.  In order to refer to the lattice points of these polygons with coordinates, we will assume that each is positioned to have the lower left corner at the origin $(0,0)$.

\begin{figure}[hbt]
\centering
\includegraphics{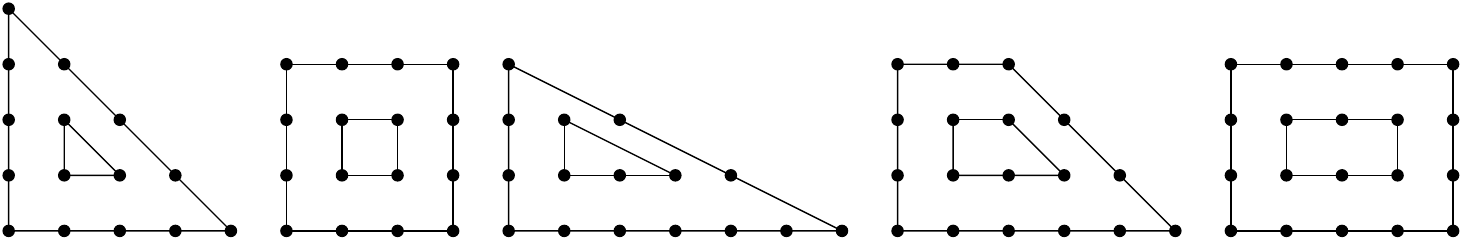}
\caption{Possible interior polygons for $\pint$, and polygons that must contain $P$}
\label{figure:possibilities_for_pint}

\end{figure}

We claim that $P$ cannot have $T_{0,2}$ $T_{1,2}$, or $T_{2,2}$ as its interior polygon. In each of those cases, note that $P$ automatically has  $3$ interior points at height $1$; since $\ell(P)=2$, there can be no boundary lattice points at height $1$.  In order for boundary lattice points at height $0$ to connect to boundary lattice points at height greater than $1$, the points $(1,0)$ and  $(4,0)$ must be included; but then there are $4$ collinear lattice points at height $0$, contradicting $\ell(P)=2$.

Now suppose $\pint=T_{1,1}$.  Note that no boundary point of the $3\times 3$ square can see all interior points, so any panoptigon point $q$ must be an interior point. Without loss of generality, assume that it is $q=(1,1)$, meaning that the points $(1,3)$, $(3,1)$, and $(3,3)$ cannot be included in $P$.  Among the two points $(2,3)$ and $(3,2)$, at least one must be included to allow for the desired interior polygon.  By symmetry we may assume that $(2,3)$ is included.  There cannot be any other points at height $3$, so $(2,3)$ must be a vertex of $P$ and connect to a boundary point of the form $(0,b)$; the only possible such point is $(0,2)$.  It then follows that $(3,2)$ cannot be included, since this would yield $4$ collinear points at height $2$.  Thus $P$ has an edge connecting $(2,3)$ to $(3,0)$.  The point $(2,0)$ cannot appear in $P$ since there are already three points with $x$-coordinate equal to $2$, so $(3,0)$ must be connected to $(0,1)$.  At this point, we know that $P=\textrm{conv}((0,1),(0,2),(2,3),(3,0))$.  This is indeed a panoptigon of lattice width $3$ and lattice diameter $2$.

Finally we will deal with the case where $\pint=T_{0,1}$.  We deal with several possibilities for the (not necessarily unique) panoptigon point $q$ of $P$.

\begin{itemize}
    \item Suppose $q$ is an interior lattice point of $P$.  By symmetry we may assume $q=(1,1)$, so the lattice points $(3,1)$ and $(1,3)$ cannot be included.  Since there must at least one lattice point at height $3$ or above, either $(0,3)$ or $(0,4)$ (or both) must be included.  If it is only $(0,4)$ and not $(0,3)$, then the points $(1,0)$ and $(3,0)$ must be included, yielding the polygon $\textrm{conv}((1,0),(3,0),(0,4))$.  Otherwise, $(0,3)$ is in $P$.  Since $(0,3)$, $(1,2)$, and $(2,1)$ are all lattice points of $P$, the point $(3,0)$ cannot be included.  Now, at least one lattice point from the diagonal edge must be included, namely $(4,0)$, $(2,2)$, or $(0,4)$; in fact, it must be exactly one, since otherwise $(1,3)$ or $(3,1)$ would be introduced by convexity.  If $P$ contains $(0,4)$ or $(2,2)$ and no other points along that edge, then it must also contain $(3,0)$, which we have already ruled out.  Thus $P$ contains $(4,0)$, and as it does not contain $(3,0)$ it must have an edge connecting $(4,0)$ to $(0,1)$.  At this point there is a single possibility for $P$, namely $P=\textrm{conv}((0,1),(0,3),(4,0))$.  This polygon is  equivalent to the previous one, so we need only include one.  This panoptigon does indeed have lattice diameter $2$.

    \item  Now we deal with the case that the panoptigon point is a boundary point. Since the panoptigon point must see all three interior points, it must either be a vertex of $T_4$ or the midpoint of one of the edges.  Up to symmetry, we may thus assume that $q$ is either $(0,0)$ or $(2,0)$.  If $q=(0,0)$, then the point $(1,3)$ must be included; otherwise we would need $(0,3)$, which is not visible to $(0,0)$.  Similarly $(1,3)$ is included, but then $(2,2)$ is included by convexity, and this point is not visible to $(0,0)$, a contradiction.
    
    If $q=(2,0)$, then there are $1$, $2$, or $3$ points at height $0$.  If there is only $q$, then the points $(0,1)$ and $(3,1)$ must be included, contradicting $\ell(P)=2$.  If there are $2$ points, we will assume by symmetry that the two points are $(1,0)$ and $(2,0)$.  The lattice point $(3,1)$ must then be included and the point $(0,1)$ must not be included; the only remaining point to include from the face on the line $x=0$ is $(0,3)$. No other lattice points can be included, so then $P=\textrm{conv}((1,0),(2,0),(3,1),(0,3))$.  Finally, if there are $3$ points at height $0$ they must be $(1,0)$, $(2,0)$, and $(3,0)$.  But now neither $(0,3)$ nor $(1,3)$ may be included since $\ell(P)=2$.  Since $(0,4)$ is not visible from $q$, there are no points in $P$ with height greater than $2$, a contradiction to $T_{0,1}$ being the interior polygon of $P$.
\end{itemize}
We conclude that the only non-hyperelliptic panoptigons $P$ with $\ell(P)\leq 2$ are the three claimed.

\end{proof}

Combined with our computation, this gives us the following count.

\begin{cor}\label{cor:nonhyperelliptic_panoptigon_count}
Up to lattice equivalence, there are $72$ non-hyperelliptic panoptigons.
\end{cor}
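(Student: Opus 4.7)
The plan is to stratify the non-hyperelliptic panoptigons by lattice diameter and then add the counts produced by the two classifications already in hand. First I would verify that every non-hyperelliptic panoptigon $P$ satisfies $\ell(P)\geq 2$. Non-hyperellipticity means $\pint$ is two-dimensional, so $\lw(\pint)\geq 1$. Applying Lemma \ref{lemma:lw_facts} in both of its cases yields $\lw(P)\geq 3$: if $P=T_d$ for some $d$, then $\lw(P)=d=\lw(\pint)+3\geq 4$, and otherwise $\lw(P)=\lw(\pint)+2\geq 3$. The inequality $\lw(P)\leq \lfloor\frac{4}{3}\ell(P)\rfloor+1$ recalled just before Section \ref{section:panoptigons} then forces $\ell(P)\geq 2$.

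With that in place, the non-hyperelliptic panoptigons fall into two disjoint groups. For $\ell(P)=2$, Proposition \ref{prop:possibilities_ld2} identifies exactly $3$ panoptigons up to equivalence. For $\ell(P)\geq 3$, the \texttt{polymake}-based enumeration described at the beginning of this appendix produces $69$ panoptigons up to equivalence, extracted from the $215$ convex polygons on the thirty-point configuration of Figure \ref{figure:ruling_out_points}. Summing the two disjoint cases gives $3+69=72$.

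The only real thing to worry about is exhaustiveness on each side, and both halves of that are already established: exhaustiveness on the $\ell(P)\geq 3$ side is built into the proof of Theorem \ref{theorem:at_most_13}, which forces every such panoptigon, after a suitable unimodular transformation, to sit inside the thirty-point configuration of Figure \ref{figure:ruling_out_points}; and exhaustiveness on the $\ell(P)=2$ side is exactly Proposition \ref{prop:possibilities_ld2}. Once these are invoked, the corollary is a one-line addition, so there is no substantive obstacle beyond cross-checking the two ranges cover all non-hyperelliptic panoptigons, which the bound $\ell(P)\geq 2$ guarantees.
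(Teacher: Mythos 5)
Your proposal matches the paper's argument exactly: the paper also obtains the count by adding the $69$ non-hyperelliptic panoptigons of lattice diameter at least $3$ from the \texttt{polymake} enumeration to the $3$ of lattice diameter at most $2$ from Proposition \ref{prop:possibilities_ld2}. Your preliminary check that $\ell(P)\geq 2$ (via Lemma \ref{lemma:lw_facts} and the B\'ar\'any--F\"uredi bound) is the same observation the paper makes inside the proof of Proposition \ref{prop:possibilities_ld2}, so there is no substantive difference.
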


The explicitness of our enumeration allows us to find the largest lattice width of any panoptigon: by Lemma \ref{lemma:lw_facts}, the lower right triangle in Figure \ref{figure:panoptigons_12_or_13} has lattice width $5$, and all the other panoptigons have lattice width $4$ or less.

\begin{figure}[hbt]
\centering
\includegraphics[scale=0.75]{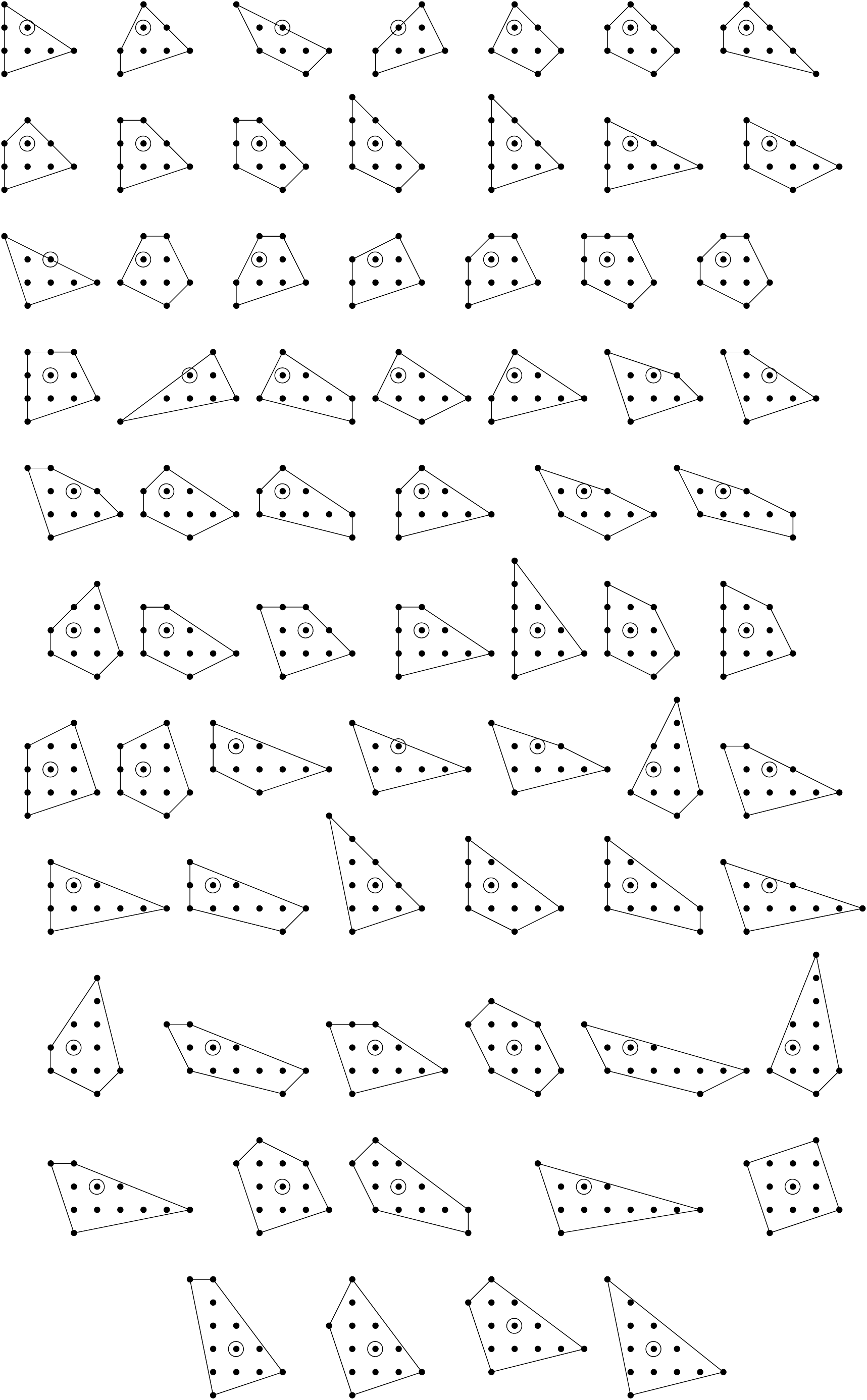}
\caption{All non-hyperelliptic panoptigons with lattice diameter at least $3$}
\label{figure:panoptigons_all}

\end{figure}

\begin{figure}[hbt]
\centering
\includegraphics[scale=0.75]{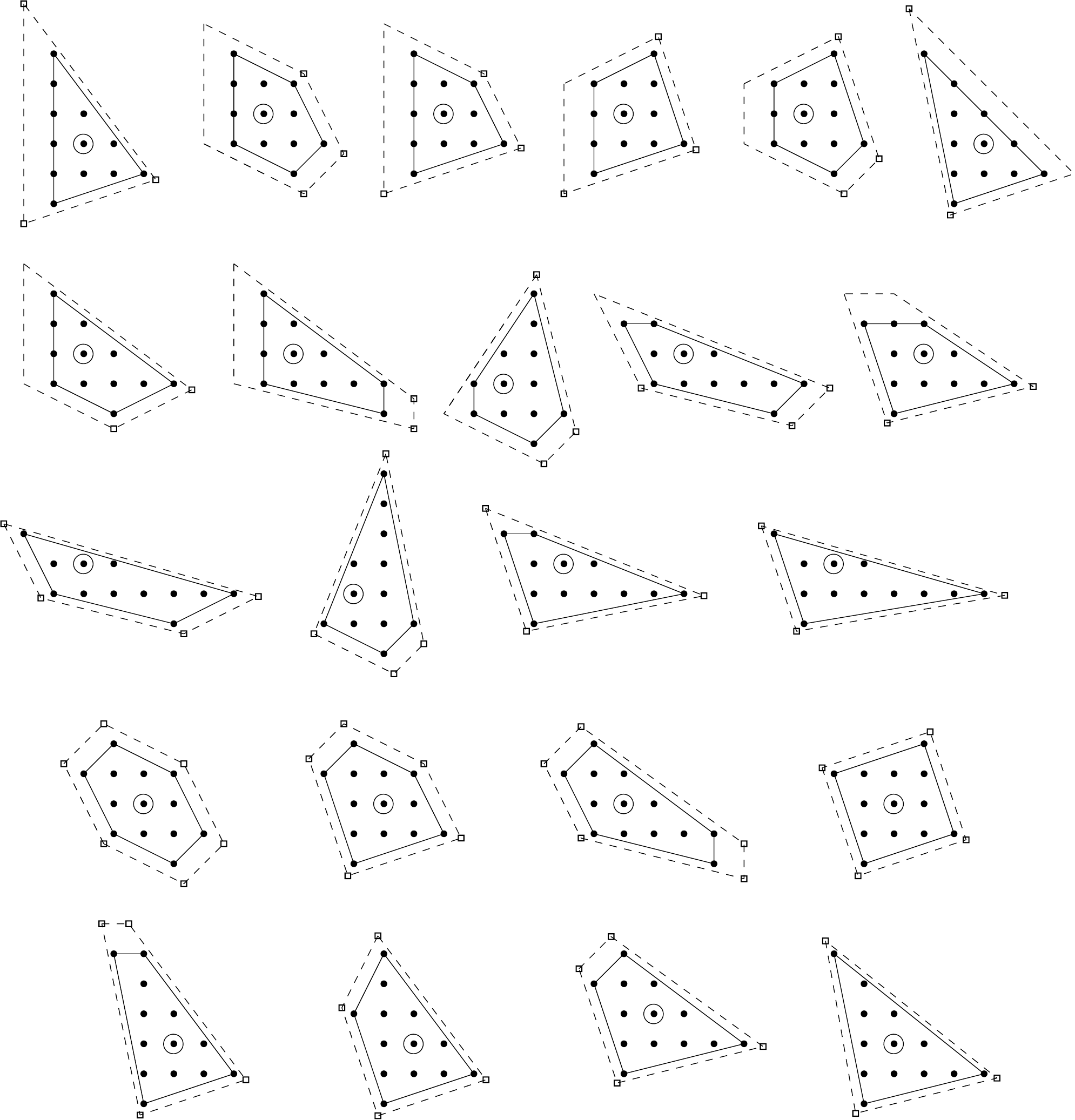}
\caption{All non-hyperelliptic panoptigons with $12$ or $13$ lattice points, along with their relaxed polygons}
\label{figure:panoptigons_12_or_13}

\end{figure}

\end{appendices}

\bibliographystyle{abbrv}

\end{document}